\def\titlerunning#1{\gdef\titrun{#1}}
\def\author#1{\gdef\autrun{\def\and{\unskip, }#1}\gdef\@author{#1}}
\def\address#1{{\def\and{\\\hspace*{18pt}}\renewcommand{\thefootnote}{}%
\footnote {#1}}%
\markboth{\autrun}{\titrun}}
\def\email#1{e-mail: #1}
\def\subjclass#1{{\renewcommand{\thefootnote}{}%
\footnote{\emph{Mathematics Subject Classification (2010):} #1}}}
\def\keywords#1{\par\medskip
\noindent\textbf{Keywords.} #1}
\newtheorem{theorem}{Theorem}[section]
\newtheorem{corollary}[theorem]{Corollary}
\newtheorem{definition}[theorem]{Definition}
\newtheorem{lemma}[theorem]{Lemma}
\newtheorem{notation}[theorem]{Notation}
\newtheorem{proposition}[theorem]{Proposition}
\newtheorem{remark}[theorem]{Remark}
\begin{document}

%%%%% To ease editing, add:

\baselineskip=17pt

%%%%%%%%%%%%%%%%

%% In the running head, give an abbreviation of the title.
\titlerunning{Calculus of Variations with Differential Forms}

\title{Calculus of Variations with Differential Forms}

\author{Saugata Bandyopadhyay \and Bernard Dacorogna
\and
Swarnendu Sil}

\date{}

\maketitle

\address{S. Bandyopadhyay: IISER Kolkata, Mohanpur Campus, Mohanpur-741252, India; \email{saugata.bandyopadhyay@iiserkol.ac.in}
\and
B. Dacorogna: Section de Math\'ematiques, EPFL, 1015 Lausanne, Switzerland; \email{bernard.dacorogna@epfl.ch}
\and
S. Sil: Section de Math\'ematiques, EPFL, 1015 Lausanne, Switzerland; \email{swarnendu.sil@epfl.ch}}

\subjclass{Primary 49-XX}

%%%%%%%%

\begin{abstract}
We study integrals of the form $\int_{\Omega}f\left(  d\omega\right)$,
where $1\leq k\leq n$, $f:\Lambda^{k}\rightarrow\mathbb{R}$ is continuous and $\omega$ is a
$\left(k-1\right)$-form. We introduce the appropriate notions of
convexity, namely ext. one convexity, ext. quasiconvexity and ext.
polyconvexity. We study their relations, give several examples and
counterexamples. We finally conclude with an application to a minimization problem.

%% Keywords are optional
\keywords{calculus of variations, differential forms, quasiconvexity, polyconvexity and ext. one convexity}
\end{abstract}

\section{Introduction}

In this article, we study integrals of the form $$\int_{\Omega}f\left(  d\omega\right),$$
where $1\leq k\leq n$ are integers, $f:\Lambda^{k}\rightarrow\mathbb{R}$ is a continuous function and $\omega$ is a
$\left(  k-1\right)$-form. When $k=1,$ by abuse of notations identifying
$\Lambda^{1}$ with $\mathbb{R}^{n}$ and the
operator $d$ with the gradient, this is the classical problem of the calculus
of variations where one studies integrals of the form%
\[
\int_{\Omega}f\left(  \nabla\omega\right)  .
\]
This is a \emph{scalar} problem in the sense that there is only one function
$\omega.$ It is well known that in this last case the \emph{convexity} of $f$
plays a crucial role. As soon as $k\geq2,$ the problem is more of a
\emph{vectorial} nature, since then $\omega$ has now several components.
However, it has some special features that a general vectorial
problem does not have. Before going further one should have two examples in mind.\smallskip

1) If $k=2$, with our usual abuse of notations,%
\[
\omega:\mathbb{R}^{n}\rightarrow\mathbb{R}^{n}\text{\quad and\quad}%
d\omega=\operatorname{curl}\omega.
\]

2) If $k=n$, by abuse of notations and up to some changes of signs,%
\[
\omega:\mathbb{R}^{n}\rightarrow\mathbb{R}^{n}\text{\quad and\quad}%
d\omega=\operatorname{div}\omega.
\]
So let us now discuss some specific features of our problem.\smallskip

- The first important point is the lack of \emph{coercivity}. Indeed, even if
the function $f$ grows at infinity as the norm to a certain power, this does
not imply control on the full gradient but only on some combination of it,
namely $d\omega.$ So when looking at minimization problems, this fact requires
special attention (see Theorem \ref{Thm existence de minima}).\smallskip

- From the point of view of convexity, the situation is, in some cases, simpler
than in the general vectorial problem. Indeed consider the above two examples
(with $n=3$ for the first one). Although the problems are vectorial they
behave as if they were scalar (cf. Theorem
\ref{Thm general sur poly quasi ...}).\smallskip

- One peculiarity (cf. Theorem \ref{Thm principal quasiaffine})
that particularly stands out is how the problem changes its behaviour with
a change in the order of the form. When $k$ is odd, or when $2k>n$ (in particular
$k=n$), there is no nonlinear function which is ext. quasiaffine and therefore, the problem behaves as if it were scalar.
However, the situation changes significantly when $k\leq2n$ is even. It turns out that we have an ample supply of nonlinear functions that are ext. quasiaffine in this case. For example, the nonlinear function%
\[
f\left(  \xi\right)  =\left\langle c;\xi\wedge\xi\right\rangle ,
\]
where $c\in\Lambda^{2k}$, is ext. quasiaffine. See Theorem \ref{Thm principal quasiaffine} for the complete characterization of ext. quasiaffine functions which, 
in turn, determines all weakly continuous functions with respect to the $d$-operator, see Bandyopadhyay-Sil \cite{Band-Dac-Sil} for detail.

\smallskip

Because of the special nature of our problem, we are led to introduce the
following terminology:\emph{ ext. one convexity}, \emph{ext. quasiconvexity} and
\emph{ext. polyconvexity}, which are the counterparts of the classical notions of
the vectorial calculus of variations (see, in particular Dacorogna \cite{Dacorogna
2007}), namely rank one convexity, quasiconvexity and polyconvexity. The relations
between these notions (cf. Theorem \ref{Thm general sur poly quasi ...}) as
well as their manifestations on the minimization problem are the subject of the present paper.
Examples and counterexamples are also discussed in details, notably the case
of ext. quasiaffine functions (see Theorem \ref{Thm principal quasiaffine}),
the quadratic case (see Theorem \ref{Thm formes quadratiques}) and a
fundamental counterexample (see Theorem \ref{Thm comme Sverak}) similar to the
famous example of Sverak \cite{Sverak 1992a}.\smallskip

Some of what has been done in this article may also be seen through classical vectorial calculus of variations.
This connection is elaborated and pursued in detail in a forthcoming article, see Bandyopadhyay-Sil \cite{Band-Sil}. However, the case of differential forms in the context of calculus of variations deserves a separate and independent treatment because of its
special algebraic structure which renders much of the calculation intrinsic, natural and coordinate free.\smallskip

We conclude this introduction by pointing out that the results discussed
in this introduction may be interpreted very broadly in terms of the theory of
compensated compactness introduced by Murat and Tartar, see \cite{Murat(1978)},
\cite{Tartar 1978} and see also Dacorogna \cite{Dacorogna 1982}, Robbin-Rogers-Temple
\cite{Robbin-Rogers-Temple}. In particular, our notion of ext. one convexity is
related to the so called condition of convexity in the directions of
the wave cone $\Lambda.$ Our definition of ext. quasiconvexity is related to those of $A$
and $A-B$ quasiconvexity introduced by Dacorogna (cf. \cite{Dacorogna 1982a}
and \cite{Dacorogna 1982}), see also Fonseca-M\"{u}ller \cite{Fonseca-Muller}.

\section{Definitions and main properties}

\subsection{Definitions}

We start with the different notions of convexity and affinity.

\begin{definition}
Let $1\leq k\leq n$ and $f:\Lambda^{k}\rightarrow\mathbb{R}.$\smallskip

\textbf{(i)} We say that $f$ is \emph{ext. one convex}, if the function%
\[
g:t\rightarrow g\left(  t\right)  =f\left(  \xi+t\,\alpha\wedge\beta\right)
\]
is convex for every $\xi\in\Lambda^{k},$ $\alpha\in\Lambda^{k-1}$ and
$\beta\in\Lambda^{1}.$ If the function $g$ is affine we say that $f$ is
\emph{ext. one affine.}\smallskip

\textbf{(ii)} $f$ is said to
be \emph{ext. quasiconvex}, if $f$ is Borel measurable, locally bounded and
\[
\int_{\Omega}f\left(  \xi+d\omega\right)  \geq f\left(  \xi\right)
\operatorname*{meas}\Omega,
\]
for every bounded open set $\Omega\subset\mathbb{R}^n$, $\xi\in\Lambda^{k}$ and $\omega\in W_{0}^{1,\infty}\left(  \Omega;\Lambda^{k-1}\right)$. If
equality holds, we say that $f$ is \emph{ext. quasiaffine.}\smallskip

\textbf{(iii)} We say that $f$ is \emph{ext. polyconvex}, if there exists a
convex function%
\[
F:\Lambda^{k}\times\Lambda^{2k}\times\cdots\times\Lambda^{\left[  n/k\right]
k}\rightarrow\mathbb{R}
\]
such that%
\[
f\left(  \xi\right)  =F\left(  \xi,\xi^{2},\cdots,\xi^{\left[  n/k\right]
}\right),\text{ for all }\xi \in\Lambda^{k}.
 \]
If $F$ is affine, we say that $f$ is \emph{ext. polyaffine.}
\end{definition}
\begin{remark}
\label{Remarque sur les def de convexite}
\textbf{(i)} The ext. stands for
exterior product in the first and third ones and for the exterior derivative
for the second one.\smallskip

\textbf{(ii)} When $k$ is odd (since then $\xi^{s}=0$ for every $s\geq2$) or
when $2k>n$ (in particular, when $k=n$ or $k=n-1$), then ext. polyconvexity is
equivalent to ordinary convexity (see Proposition
\ref{Proposition equiv polyconvexite}).\smallskip

\textbf{(iii)} When $k=1,$ all the above notions are equivalent to the
classical notion of convexity (cf. Theorem
\ref{Thm general sur poly quasi ...}).\smallskip

\textbf{(iv)} As in Proposition 5.11 of \cite{Dacorogna 2007}, it can easily be
shown that if the inequality of ext. quasiconvexity holds for a given bounded
open set $\Omega,$ it holds for any bounded open set.\smallskip

\textbf{(v)} The definition of ext. quasiconvexity is equivalent (as in
Proposition 5.13 of \cite{Dacorogna 2007}) to the following. Let $D=\left(
0,1\right)  ^{n},$ the inequality%
\[
\int_{D}f\left(  \xi+d\omega\right)  \geq f\left(  \xi\right),
\]
holds for every $\xi\in\Lambda^{k}$ and for every $\omega\in W_{per}^{1,\infty}\left(  D;\Lambda^{k-1}\right)$, where
\[
W_{per}^{1,\infty}\left(  D;\Lambda^{k-1}\right)  =\left\{
\omega\in W^{1,\infty}\left(  D;\Lambda^{k-1}\right)  :\omega\text{ is
}1\text{-periodic in each variable}\right\}  .
\]
\end{remark}

\begin{definition}
 Let $0\leq k\leq n$ and $f:\Lambda^{k}
\rightarrow\mathbb{R}.$ The \emph{Hodge transform} of $f$ is the function $f_{\ast}:\Lambda^{n-k}\rightarrow\mathbb{R}$
defined as,\smallskip
\[
f_{\ast}(\xi) = f \left(\ast\xi\right), \text{ for all }\xi \in\Lambda^{n-k}.
 \]
\end{definition}
The notion of Hodge transform allows us to extend the notions of convexity with respect to the interior product and the $\delta$-operator as follows.
\begin{definition}
Let $0\leq k\leq n-1$ and $f:\Lambda^{k}
\rightarrow\mathbb{R}$.We say that \smallskip

(i) $f$ is \emph{int. one convex}, if $f_{\ast}$ is \emph{ext. one convex}.\smallskip

(ii) $f$ is \emph{int. quasiconvex}, if $f_{\ast}$ is \emph{ext. quasiconvex}.\smallskip

(iii) $f$ is \emph{int. polyconvex}, if $f_{\ast}$ is \emph{ext. polyconvex}.
\end{definition}

\begin{remark}
\textbf{(i)} Statements similar to those in Remark \ref{Remarque sur les def de convexite} hold in the case of int. convexity as well.\smallskip

\textbf{(ii)} It is easy to check that $f$ is \emph{int. one convex}, if and only if the function%
\[
g:t\rightarrow g\left(  t\right)  =f\left(  \xi+t\,\beta\,\lrcorner
\,\alpha\right)
\]
is convex for every $\xi\in\Lambda^{k},$ $\beta\in\Lambda^{1}$ and $\alpha
\in\Lambda^{k+1}.$ Furthermore, $f$ is \emph{int. quasiconvex} if and only if $f$ is Borel measurable, locally bounded and
\[
\int_{\Omega}f\left(  \xi+\delta\omega\right)  \geq f\left(  \xi\right)
\operatorname*{meas}\Omega,
\]
for every bounded open set $\Omega\subset\mathbb{R}^n$, $\xi\in\Lambda^{k}$ and $\omega\in W_{0}^{1,\infty}\left(  \Omega;\Lambda^{k+1}\right)$. The
third one is however a little more involved and we leave out the
details.\smallskip

\end{remark}

In what follows, we will discuss the case of ext. convexity only. The case of int. convexity can be handled analogously.

\subsection{Preliminary lemmas}
In this subsection, we state two lemmas which will be used in sequel. See \cite{Sil} for the proofs. We start with the following problem of prescribed differentials. Let us recall that $\alpha\in\Lambda^k$ is said to be \emph{1-divisible} if $\alpha=a\wedge b$, for some $a\in\Lambda^{k-1}$, $b\in \Lambda^1$.
\begin{lemma}\label{dik-form:1}
Let $1\leq k\leq n$ and let $\omega_1,\omega_2\in\Lambda^{k}$. Then, there exists $\omega\in W^{1,\infty}(\Omega;\Lambda^{k-1})$ satisfying
$$
d\omega\in\{\omega_1,\omega_2\},\text{ a.e. in }\Omega,
$$
(and taking both values), if and only if $\omega_1-\omega_2$ is 1-divisible.
\end{lemma}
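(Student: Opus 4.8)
The statement is a "two-well" type rigidity/flexibility result for the differential operator $d$ acting on $(k-1)$-forms. The proof naturally splits into the two implications.

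For the necessity ("only if"), suppose such an $\omega$ exists. The plan is to localize: since $d\omega$ takes both values on sets of positive measure, there must be an interface. Working at a point of the reduced boundary between the two phases (where $d\omega$ has an approximate normal $\nu\in\Lambda^1$), I would use a blow-up argument. The tangential part of $d\omega$ must match across the interface, so the jump $\omega_1-\omega_2$ is constrained. Concretely, if $\omega$ is (locally) Lipschitz and $d\omega$ jumps across a hyperplane with normal $\nu$, then writing the Hadamard-type jump condition for $d$, one gets that $\omega_1-\omega_2 = a\wedge\nu$ for some $a\in\Lambda^{k-1}$, i.e. the difference is $1$-divisible. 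This is the classical compatibility condition: the wave cone of the operator $d$ (in the sense of compensated compactness, as the authors note in the introduction) consists precisely of the $1$-divisible forms $a\wedge b$. So the necessity is essentially the statement that the jump of a piecewise-Lipschitz potential lies in the wave cone.

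For the sufficiency ("if"), suppose $\omega_1-\omega_2 = a\wedge b$ with $a\in\Lambda^{k-1}$, $b\in\Lambda^1$. I would construct $\omega$ explicitly as a laminate perpendicular to $b$. After an affine change of coordinates we may assume $b = e_n$ (or $b=dx^n$). Pick primitives $\eta_i$ with $d\eta_i=\omega_i$ that differ in a controlled way; in fact, write $\omega_1 = \omega_2 + a\wedge dx^n$ and choose the potential $\omega(x) = \eta(x) + \varphi(x^n)\, a$, where $\varphi:\mathbb{R}\to\mathbb{R}$ is a Lipschitz function whose derivative $\varphi'$ takes only the values $0$ and $1$ on sets of positive measure (a standard sawtooth). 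Then $d\omega = d\eta + \varphi'(x^n)\, da\wedge \ldots$ — more precisely one arranges $d(\varphi(x^n)a) = \varphi'(x^n)\, dx^n\wedge a = \pm\varphi'(x^n)\, a\wedge dx^n$, so $d\omega$ alternates between $\omega_1$ and $\omega_2$ exactly on the slabs where $\varphi'=1$ and $\varphi'=0$. Cutting off near $\partial\Omega$ (or just noting that the lemma as stated only requires $\omega\in W^{1,\infty}(\Omega;\Lambda^{k-1})$ with no boundary condition) completes the construction.

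The main obstacle is the necessity direction, and specifically making the blow-up/jump argument rigorous for a merely Lipschitz (rather than piecewise-smooth) $\omega$: one must show that at almost every point of the essential boundary separating $\{d\omega=\omega_1\}$ from $\{d\omega=\omega_2\}$ the approximate differential structure forces the jump to be $1$-divisible, which requires knowing this separating set is non-null (it is, since both values are attained on positive-measure sets and $\Omega$ is connected-ish — or one reduces to a point of density $1/2$) and then invoking a measure-theoretic version of the Hadamard jump relation for the operator $d$. The sufficiency direction is elementary once the right explicit laminate is written down. Since the authors defer to \cite{Sil} for the proof, I expect they carry out exactly this program, with the technical care for the necessity handled there.
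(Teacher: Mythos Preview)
The paper does not actually prove this lemma: it defers to \cite{Sil} with the sentence ``See \cite{Sil} for the proofs.'' So there is nothing in the present paper to compare your argument against, and your closing remark already anticipates this.

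Your plan for the sufficiency direction is correct and is the standard laminate construction; the only cosmetic point is the sign $(-1)^{k-1}$ in $d(\varphi(x^n)a)=\varphi'(x^n)\,dx^n\wedge a$, which you handle by letting $\varphi'$ take values in $\{0,\pm1\}$ as needed.

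For the necessity direction your blow-up/Hadamard-jump strategy is viable, but there is a considerably cleaner route that avoids sets of finite perimeter and reduced boundaries entirely, and which you may prefer. Set $\alpha=\omega_1-\omega_2$ and $E=\{d\omega=\omega_1\}$; then $d\omega=\omega_2+\chi_E\,\alpha$, and since $\omega_2$ has a global Lipschitz primitive $\eta$, the form $\chi_E\,\alpha=d(\omega-\eta)$ is exact, hence closed: $d\chi_E\wedge\alpha=0$ in $\mathcal{D}'(\Omega)$. If $\alpha$ were \emph{not} $1$-divisible, then by Cartan's lemma the linear map $\Lambda^1\ni b\mapsto b\wedge\alpha\in\Lambda^{k+1}$ is injective, so the identity $d\chi_E\wedge\alpha=0$ forces $\nabla\chi_E=0$ distributionally and thus $\chi_E$ is constant on each connected component of $\Omega$. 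Provided $\Omega$ is connected (which the paper uses tacitly; otherwise the lemma as stated is false, since on a disconnected $\Omega$ one may prescribe $\omega_1$ on one component and $\omega_2$ on another with no constraint on $\omega_1-\omega_2$), this contradicts the assumption that both values are attained. This argument is shorter and sidesteps the measure-theoretic subtleties you flag as the main obstacle.
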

Using Lemma \ref{dik-form:1}, one can deduce the following approximation lemma for $k$-forms. See Lemma 3.11 of \cite{Dacorogna 2007} for the case of the gradient.
\begin{lemma}\label{approximation theorem}
Let $1\leq k\leq n$,  $t\in [0,1]$ and let $\alpha,\beta\in\Lambda^{k}$ be such that $\alpha\neq\beta$ and $\alpha-\beta$ is 1-divisible. Let $\Omega\subset\mathbb{R}^n$ be open, bounded and let $\omega:\overline{\Omega}\rightarrow\Lambda^{k-1}$ satisfy
$$
d\,\omega=t\alpha+(1-t)\beta,\text{ in }\overline{\Omega}.
$$
Then, for every $\epsilon>0$, there exist $\omega_\epsilon\in\operatorname*{Aff}_{\text piece}\left(\overline{\Omega};\Lambda^{k-1}\right)$ and disjoint open sets $\Omega_\alpha,\Omega_\beta\subset\Omega$ such that
\begin{enumerate}
\item $|\operatorname*{meas}(\Omega_\alpha)-t\operatorname*{meas}(\Omega)|\leqslant\epsilon$ and $|\operatorname*{meas}(\Omega_\beta)-(1-t)\operatorname*{meas}(\Omega)|\leqslant\epsilon$,
\item $\omega_\epsilon=\omega,$ in a neighbourhood of $\partial\Omega$,
\item $\Vert \omega_\epsilon-\omega\Vert_{L^\infty(\overline{\Omega})}\leqslant\epsilon$,
\item $d\omega_\epsilon(x)=\left\{\begin{array}{rl}
\alpha,&\text{if }x\in \Omega_\alpha,\\
\beta,&\text{if }x\in \Omega_\beta,
\end{array}\right.
$
\item $\operatorname*{dist}\left(d\omega_\epsilon(x);\{t\alpha+(1-t)\beta:t\in [0,1]\}\right)\leqslant\epsilon$, for all  $x\in\Omega$ a.e.
\end{enumerate}
\end{lemma}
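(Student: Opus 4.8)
The plan is to transcribe to the exterior derivative the piecewise--affine laminate construction of Lemma 3.11 of \cite{Dacorogna 2007}, the role of rank--one matrices being played by $1$-divisible forms and Lemma \ref{dik-form:1} providing the basic building block. First, reduce to the essential case: if $t\in\{0,1\}$ take $\omega_{\epsilon}=\omega$, so assume $t\in(0,1)$; and since the statement is local and only asks for a piecewise affine $\omega_{\epsilon}$ agreeing with $\omega$ near $\partial\Omega$, take $\omega$ to be affine (the general piecewise--affine case then follows by arguing on each region of affinity). As $\alpha\neq\beta$ and $\alpha-\beta$ is $1$-divisible, write $\alpha-\beta=a\wedge b$ with $a\in\Lambda^{k-1}$ and $b\in\Lambda^{1}\setminus\{0\}$.

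Next I would build a one--dimensional oscillation ``in the direction of $b$''. Let $\ell:\mathbb{R}^{n}\to\mathbb{R}$ be the linear function with $d\ell=(-1)^{k-1}b$, so that $d\ell\wedge a=\alpha-\beta$, and let $h:\mathbb{R}\to\mathbb{R}$ be the $1$-periodic, continuous, piecewise affine sawtooth with $h(0)=0$, $h'\equiv 1-t$ on $(0,t)$ and $h'\equiv -t$ on $(t,1)$, so that $\|h\|_{L^{\infty}}\le t(1-t)\le 1/4$. For small $\eta>0$ set
\[
\omega^{\eta}(x)=\omega(x)+\eta\,h\!\left(\ell(x)/\eta\right)a .
\]
Since $a$ is constant, $d\omega^{\eta}=d\omega+h'\!\left(\ell/\eta\right)\,d\ell\wedge a=\bigl(t\alpha+(1-t)\beta\bigr)+h'\!\left(\ell/\eta\right)(\alpha-\beta)$, which equals $\alpha$ on the open slabs where $h'(\ell/\eta)=1-t$ and $\beta$ on those where $h'(\ell/\eta)=-t$; by periodicity these two families of slabs exhaust volume fractions of $\overline{\Omega}$ tending to $t$ and $1-t$ as $\eta\to0$, while $\|\omega^{\eta}-\omega\|_{L^{\infty}(\overline{\Omega})}\le\eta\,|a|/4$. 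Thus $\omega^{\eta}$ is continuous and piecewise affine and already realises properties (1), (3), (4) and (5) on all of $\Omega$ once $\eta$ is small; this is exactly where $1$-divisibility is used, and, by Lemma \ref{dik-form:1}, no such two--valued building block exists otherwise.

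It remains to adjust $\omega^{\eta}$ near $\partial\Omega$ so that it equals $\omega$ there while staying continuous and piecewise affine, and this is the only delicate step. Here I would follow \cite{Dacorogna 2007}: choose an open $\Omega'\Subset\Omega$ with $\operatorname{meas}(\Omega\setminus\Omega')<\epsilon$, cover $\Omega'$ up to measure zero by countably many disjoint open cubes, perform the above oscillation (at a scale $\eta$ small relative to the cube) inside each cube, arranged so as to vanish on that cube's boundary, and join it affinely to the datum $\omega$ through a thin shell in each cube and throughout $\Omega\setminus\Omega'$. This ``in--and--out'' laminate keeps every piece affine and the whole map continuous, and on the transition shells one has $d\omega_{\epsilon}=s\alpha+(1-s)\beta+O(\eta)$ with $s\in[0,1]$; hence, for $\eta$ small, (5) holds everywhere, (2) follows from $\|h\|_{L^{\infty}}\le 1/4$, and the exceptional set where $d\omega_{\epsilon}\notin\{\alpha,\beta\}$ is contained in those shells together with $\Omega\setminus\Omega'$, so it has measure $<\epsilon$, which with the slab--fraction estimate gives (1); (3) and (4) are immediate. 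The main obstacle is precisely this patching: a single global cutoff $\rho(x)\,\eta\,h(\ell(x)/\eta)\,a$ is Lipschitz but \emph{not} piecewise affine --- a product of two affine functions is quadratic --- and it fails to match $\omega$ across adjacent cubes, so one cannot avoid the more careful cube--by--cube construction of \cite{Dacorogna 2007}; apart from that, the argument is a direct transcription of the gradient case, with $a\otimes\nu$ replaced by $a\wedge b$ and $\nabla$ by $d$.
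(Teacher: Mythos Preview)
The paper does not actually prove this lemma; it states the result and refers the reader to \cite{Sil} for the proof, noting that it parallels Lemma~3.11 of \cite{Dacorogna 2007} for the gradient case. Your proposal is precisely that transcription: the sawtooth laminate $\omega+\eta\,h(\ell/\eta)\,a$ in the $1$-divisible direction $a\wedge b$, followed by the cube-by-cube Vitali patching to recover the boundary datum. This is the intended approach, and your computation of $d\omega^{\eta}$ (with the sign $(-1)^{k-1}$ absorbed into the definition of $\ell$) is correct.

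One small slip: you write ``(2) follows from $\|h\|_{L^{\infty}}\le 1/4$,'' but property~(2) is the equality $\omega_{\epsilon}=\omega$ near $\partial\Omega$, which follows from your construction on $\Omega\setminus\Omega'$, not from the $L^{\infty}$ bound on $h$; presumably you meant~(3), which you had in fact already addressed. Also, the appeal to Lemma~\ref{dik-form:1} is not strictly needed in the forward direction --- you explicitly build the two-valued laminate --- though it does explain why $1$-divisibility is the right hypothesis. These are cosmetic points; the argument is sound and is exactly what the paper has in mind.
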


\subsection{Main properties}

The different notions of convexity are related as follows.

\begin{theorem}
\label{Thm general sur poly quasi ...}Let $1\leq k\leq n$ and $f:\Lambda
^{k}  \rightarrow\mathbb{R}.$\smallskip

\textbf{(i)} The following implications hold%
\[
f\text{ convex }\Rightarrow\text{ }f\text{ ext. polyconvex }\Rightarrow\text{
}f\text{ ext. quasiconvex }\Rightarrow\text{ }f\text{ ext. one convex.}%
\]

\textbf{(ii)} If $k=1,n-1,n$ or $k=n-2$ is odd, then%
\[
f\text{ convex }\Leftrightarrow\text{ }f\text{ ext. polyconvex }%
\Leftrightarrow\text{ }f\text{ ext. quasiconvex }\Leftrightarrow\text{
}f\text{ ext. one convex.}%
\]
Moreover, if $k$ is odd or $2k>n,$ then%
\[
f\text{ convex }\Leftrightarrow\text{ }f\text{ ext. polyconvex.}%
\]

\textbf{(iii)} If either $2\leq k\leq n-3$ or
$k=n-2\geq 2$ is even, then%
\[
f\text{ ext. polyconvex }%
%TCIMACRO{\QATOP{\Rightarrow}{\nLeftarrow}}%
%BeginExpansion
\genfrac{}{}{0pt}{}{\Rightarrow}{\nLeftarrow}%
%EndExpansion
\text{ }f\text{ ext. quasiconvex},
\]
while if $2\leq k\leq n-3$ (and thus $n\geq k+3\geq5$), then%
\[
f\text{ ext. quasiconvex }%
%TCIMACRO{\QATOP{\Rightarrow}{\nLeftarrow}}%
%BeginExpansion
\genfrac{}{}{0pt}{}{\Rightarrow}{\nLeftarrow}%
%EndExpansion
\text{ }f\text{ ext. one convex.}%
\]

\end{theorem}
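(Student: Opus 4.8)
\emph{Part (i): the chain of implications.} I would prove these from left to right. For \emph{convex $\Rightarrow$ ext.\ polyconvex}, take $F(\xi,\xi_{2},\dots,\xi_{[n/k]})=f(\xi)$, which is convex on the product space as soon as $f$ is. For \emph{ext.\ polyconvex $\Rightarrow$ ext.\ quasiconvex}, write $f(\xi)=F(\xi,\xi^{2},\dots,\xi^{[n/k]})$ with $F$ convex and apply Jensen's inequality to $\tfrac{1}{\operatorname{meas}\Omega}\int_{\Omega}F\big(\xi+d\omega,(\xi+d\omega)^{2},\dots\big)$; the one ingredient to check is that each power $\xi\mapsto\xi^{s}$ is ext.\ quasiaffine, i.e.\ $\int_{\Omega}(\xi+d\omega)^{s}=\xi^{s}\operatorname{meas}\Omega$ for $\omega\in W_{0}^{1,\infty}$. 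This follows from Stokes' theorem, since for $j\geq1$ the form $(d\omega)^{j}=d\big(\omega\wedge(d\omega)^{j-1}\big)$ is exact, hence so is $\xi^{s-j}\wedge(d\omega)^{j}$ (as $\xi$ is constant), and it integrates to zero over $\Omega$ because $\omega$ vanishes near $\partial\Omega$. For \emph{ext.\ quasiconvex $\Rightarrow$ ext.\ one convex}, fix $\xi\in\Lambda^{k}$, $\alpha\in\Lambda^{k-1}$, $\beta\in\Lambda^{1}$ (the case $\alpha\wedge\beta=0$ being trivial), $\lambda\in(0,1)$ and reals $s_{1}\neq s_{2}$; put $A=\xi+s_{1}\,\alpha\wedge\beta$, $B=\xi+s_{2}\,\alpha\wedge\beta$, so $A\neq B$ and $A-B$ is $1$-divisible. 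Apply Lemma~\ref{approximation theorem} with $t=\lambda$ to an affine $\omega$ with constant differential $\lambda A+(1-\lambda)B$, test the ext.\ quasiconvexity inequality against $\omega_{\epsilon}-\omega\in W_{0}^{1,\infty}(\Omega;\Lambda^{k-1})$, and let $\epsilon\to0$; local boundedness of $f$ kills the contribution of the vanishing-measure set where $d\omega_{\epsilon}$ is only $\epsilon$-close to the segment $[B,A]$, leaving $f(\lambda A+(1-\lambda)B)\leq\lambda f(A)+(1-\lambda)f(B)$, i.e.\ convexity of $t\mapsto f(\xi+t\,\alpha\wedge\beta)$.

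\emph{Part (ii): the equivalences.} If $k$ is odd then $\xi^{s}=0$ for all $s\geq2$, and if $2k>n$ then $\Lambda^{2k}=\{0\}$; in either case the target of $F$ in the definition of ext.\ polyconvexity reduces to $\Lambda^{k}$, so ext.\ polyconvex $\Leftrightarrow$ convex — this is Proposition~\ref{Proposition equiv polyconvexite}, and it settles the second displayed equivalence. Granting part (i), the four-fold equivalence then comes down to the single implication \emph{ext.\ one convex $\Rightarrow$ convex}. When $k=1$, $k=n$ or $k=n-1$, every element of $\Lambda^{k}$ is $1$-divisible ($1$-forms and $n$-forms trivially; an $(n-1)$-form is Hodge dual to a $1$-form, hence a scalar multiple of the volume form of an $(n-1)$-plane), so ext.\ one convexity is convexity along every line, i.e.\ convexity. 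The remaining case $k=n-2$ odd — equivalently $n$ odd, the sub-case $k=1$ being already covered — is the delicate one, and here I would use the Hodge transform: $f$ is ext.\ one convex on $\Lambda^{n-2}$ if and only if $f_{\ast}$ is int.\ one convex on $\Lambda^{2}$, i.e.\ if and only if $t\mapsto f_{\ast}(\eta+t\,\beta\lrcorner\gamma)$ is convex for every $\eta\in\Lambda^{2}$, $\beta\in\Lambda^{1}$, $\gamma\in\Lambda^{3}$. Since $n$ is odd, every alternating $2$-form on $\mathbb{R}^{n}$ is degenerate, so taking a unit vector in the kernel of $\eta$ and letting $\beta$ be the associated $1$-form gives $\eta=\beta\lrcorner(\beta\wedge\eta)$; hence the directions $\beta\lrcorner\gamma$ exhaust $\Lambda^{2}$, so $f_{\ast}$ is convex, and therefore so is $f$.

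\emph{Part (iii): strictness.} Here only the two non-implications are at issue, the forward arrows being part (i); neither is produced by the machinery above but rather by explicit examples developed later in the paper. For $2\leq k\leq n-3$ or $k=n-2\geq2$ even, an ext.\ quasiconvex function that is not ext.\ polyconvex comes from the quadratic theory of Theorem~\ref{Thm formes quadratiques}: ext.\ polyconvexity of a quadratic form would force it to split as a convex quadratic form plus ext.\ quasiaffine corrections, and in this range of $k$ there exist ext.\ quasiconvex quadratic forms admitting no such splitting. For $2\leq k\leq n-3$, an ext.\ one convex function that is not ext.\ quasiconvex is provided by the Sverak-type construction of Theorem~\ref{Thm comme Sverak}.

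\emph{Where the difficulty lies.} Part (i) and the sub-cases $k\in\{1,n-1,n\}$ of (ii) are soft. The hard part carried out inside this proof is the case $k=n-2$ odd of (ii), where the reduction via the Hodge transform to the systematic degeneracy of $2$-forms in odd dimension is the crux. The non-implications of (iii) are, strictly speaking, not proved here at all — they rest on the separate and substantially harder constructions of Theorems~\ref{Thm formes quadratiques} and~\ref{Thm comme Sverak}, the latter modelled on Sverak~\cite{Sverak 1992a}.
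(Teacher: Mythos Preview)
Your treatment of parts (i) and (ii) is correct and essentially matches the paper. In (i) the paper also expands $(\xi+d\omega)^s$ and integrates by parts, then applies Jensen; in (ii) the paper simply cites the algebraic fact (Proposition~2.43 in \cite{Csato-Dac-Kneuss 2012}) that every element of $\Lambda^k$ is $1$-divisible when $k\in\{1,n-1,n\}$ or $k=n-2$ is odd, whereas you supply, for the last case, a self-contained proof via the Hodge transform and the degeneracy of $2$-forms in odd dimension. That is a nice variant: it amounts to proving exactly the cited case of the proposition, since $\xi\in\Lambda^{n-2}$ is $1$-divisible if and only if $\ast\xi\in\Lambda^{2}$ is of the form $\beta\lrcorner\gamma$.

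There is, however, a genuine gap in your part (iii). You attribute the failure of ``ext.\ quasiconvex $\Rightarrow$ ext.\ polyconvex'' entirely to the quadratic theory of Theorem~\ref{Thm formes quadratiques}, but that theorem does \emph{not} cover the case $k=2$, $n\in\{4,5\}$. In fact Theorem~\ref{Thm formes quadratiques}(ii) shows that for $k=2$, $n=4$, every ext.\ quasiconvex \emph{quadratic} form is already ext.\ polyconvex, and the case $k=2$, $n=5$ is left open there. So no quadratic counterexample exists for $n=4$, and your sentence ``in this range of $k$ there exist ext.\ quasiconvex quadratic forms admitting no such splitting'' is false as stated. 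The paper handles these cases with a separate, non-quadratic construction: Proposition~\ref{Prop k=2 quasic n'implique pas polyc} builds, for $k=2$ and $n\geq4$, a subquadratic ext.\ quasiconvex function (the ext.\ quasiconvex envelope of $\min\{|\xi-\alpha|^{p},|\xi+\alpha|^{p}\}$ with $1<p<2$) which cannot be ext.\ polyconvex because any subquadratic ext.\ polyconvex function is convex. You should cite this alongside Theorem~\ref{Thm formes quadratiques}(v) to cover the full range claimed in (iii).
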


\begin{remark}
\label{Remarque apresThm general sur poly quasi ...}\textbf{(i)} The last statement in (ii) for $k$ even and $n\geq2k$ is false,
as the following simple example shows. Let $f:\Lambda^{2}\left(
\mathbb{R}^{4}\right)  \rightarrow\mathbb{R}$ be defined by%
\[
f\left(  \xi\right)  =\left\langle e^{1}\wedge e^{2}\wedge e^{3}\wedge
e^{4};\xi\wedge\xi\right\rangle .
\]
The function $f$ is clearly ext. polyconvex but not convex.\smallskip

\textbf{(ii)} The study of
the implications and counter implications for ext. one convexity, ext. quasiconvexity and ext. polyconvexity is therefore complete, except for the last implication, namely%
\[
f\text{ ext. quasiconvex }%
%TCIMACRO{\QATOP{\Rightarrow}{\nLeftarrow}}%
%BeginExpansion
\genfrac{}{}{0pt}{}{\Rightarrow}{\nLeftarrow}%
%EndExpansion
\text{ }f\text{ ext. one convex},
\]
only for the case $k=n-2\geq2$ even (including $k=2$ and $n=4$), which remains
open.\smallskip

\textbf{(iii)} It is interesting to read the theorem when $k=2.$\smallskip

- If $n=2$ or $n=3,$ then%
\[
f\text{ convex }\Leftrightarrow\text{ }f\text{ ext. polyconvex }%
\Leftrightarrow\text{ }f\text{ ext. quasiconvex }\Leftrightarrow\text{
}f\text{ ext. one convex.}%
\]

- If $n\geq4,$ then%
\[
f\text{ convex }%
%TCIMACRO{\QATOP{\Rightarrow}{\nLeftarrow}}%
%BeginExpansion
\genfrac{}{}{0pt}{}{\Rightarrow}{\nLeftarrow}%
%EndExpansion
\text{ }f\text{ ext. polyconvex }%
%TCIMACRO{\QATOP{\Rightarrow}{\nLeftarrow}}%
%BeginExpansion
\genfrac{}{}{0pt}{}{\Rightarrow}{\nLeftarrow}%
%EndExpansion
\text{ }f\text{ ext. quasiconvex.}%
\]

- If $n\geq5,$ then%
\[
f\text{ ext. quasiconvex }%
%TCIMACRO{\QATOP{\Rightarrow}{\nLeftarrow}}%
%BeginExpansion
\genfrac{}{}{0pt}{}{\Rightarrow}{\nLeftarrow}%
%EndExpansion
\text{ }f\text{ ext. one convex},
\]
while the case $n=4$ remains open.
\end{remark}

\begin{proof}
\textbf{(i)} \emph{Step 1.} The implication%
\[
f\text{ convex }\Rightarrow\text{ }f\text{ ext. polyconvex}%
\]
is trivial.\smallskip

\emph{Step 2.} The statement%
\[
f\text{ ext. polyconvex }\Rightarrow\text{ }f\text{ ext. quasiconvex}%
\]
is proved as follows. Observe first that if $\xi\in\Lambda^{k}$ and $\omega\in
W_{0}^{1,\infty}\left(  \Omega;\Lambda^{k-1}\right)  ,$ then%
\begin{equation}
\int_{\Omega}\left(  \xi+d\omega\right)  ^{s}=\xi^{s}\operatorname*{meas}%
\Omega,\quad\text{for every integer }s.
\label{(1) dans le thm sur les implications}%
\end{equation}
We proceed by induction on $s.$ The case $s=1$ is trivial, so we assume that
the result has already been established for $s-1$ and we prove it for $s.$
Note that%
\begin{align*}
\left(  \xi+d\omega\right)  ^{s}  &  =\xi\wedge\left(  \xi+d\omega\right)
^{s-1}+d\omega\wedge\left(  \xi+d\omega\right)  ^{s-1}\smallskip\\
&  =\xi\wedge\left(  \xi+d\omega\right)  ^{s-1}+d\left[  \omega\wedge\left(
\xi+d\omega\right)  ^{s-1}\right]  .
\end{align*}
Integrating, using induction for the first integral and the fact that
$\omega=0$ on $\partial\Omega$ for the second one, we have indeed shown
(\ref{(1) dans le thm sur les implications}). We can now conclude. Since $f$
is ext. polyconvex, we can find a convex function%
\[
F:\Lambda^{k}\times\Lambda^{2k}\times\cdots\times\Lambda^{\left[  n/k\right]
k}\rightarrow\mathbb{R}%
\]
such that%
\[
f\left(  \xi\right)  =F\left(  \xi,\xi^{2},\ldots,\xi^{\left[  n/k\right]
}\right)  .
\]
Using Jensen inequality we find,
\[
\frac{1}{\operatorname*{meas}\Omega}\int_{\Omega}f\left(  \xi+d\omega\right)  \geq F\left(  \frac{1}{\operatorname*{meas}\Omega}\int_{\Omega}\left(
\xi+d\omega\right),\ldots
,\frac{1}{\operatorname*{meas}\Omega}\int_{\Omega}\left(  \xi+d\omega\right)  ^{\left[  n/k\right]  }\right)  .
\]
Invoking (\ref{(1) dans le thm sur les implications}), we have indeed obtained
that%
\[
\int_{\Omega}f\left(  \xi+d\omega\right)  \geq f\left(  \xi\right)
\operatorname*{meas}\Omega,
\]
and the proof of Step 2 is complete.\smallskip

\emph{Step 3.} It follows from Lemma \ref{approximation theorem} that
\[
f\text{ ext. quasiconvex }\Rightarrow\text{ }f\text{ ext. one convex.}%
\]
With Lemma \ref{approximation theorem} at our disposal, the proof is very similar to that of the case of the gradient (cf. Theorem 5.3 in \cite{Dacorogna 2007}) and is omitted. See \cite{Sil} for the details. This concludes the proof of (i).\smallskip

\textbf{(ii)} In all the cases under consideration any $\xi\in\Lambda^{k}$ is 1-divisible (cf. Proposition 2.43 in \cite{Csato-Dac-Kneuss 2012}). Hence, the result%
\[
f\text{ convex }\Leftrightarrow\text{ }f\text{ ext. polyconvex }%
\Leftrightarrow\text{ }f\text{ ext. quasiconvex }\Leftrightarrow\text{
}f\text{ ext. one convex}%
\]
then follows at once. The extra statement (i.e. when $k$ is odd or $2k>n$)%
\[
f\text{ convex }\Leftrightarrow\text{ }f\text{ ext. polyconvex.}%
\]
is proved in Remark \ref{Remarque sur les def de convexite} (ii) and Proposition \ref{Proposition equiv polyconvexite}.\smallskip

\textbf{(iii)} The statement that%
\[
f\text{ ext. polyconvex }%
%TCIMACRO{\QATOP{\Rightarrow}{\nLeftarrow}}%
%BeginExpansion
\genfrac{}{}{0pt}{}{\Rightarrow}{\nLeftarrow}%
%EndExpansion
\text{ }f\text{ ext. quasiconvex},
\]
when $3\leq k\leq n-3$ or $k=n-2\geq4$ is even follows from Theorem
\ref{Thm formes quadratiques} (v) and from Proposition
\ref{Prop k=2 quasic n'implique pas polyc} when $k=2$ and $n\geq4$ (for $k=2$
and $n\geq6,$ we can also apply Theorem \ref{Thm formes quadratiques}
(ii)).\smallskip

The statement that if $2\leq k\leq n-3$ (and thus $n\geq k+3\geq5$), then%
\[
f\text{ ext. quasiconvex }%
%TCIMACRO{\QATOP{\Rightarrow}{\nLeftarrow}}%
%BeginExpansion
\genfrac{}{}{0pt}{}{\Rightarrow}{\nLeftarrow}%
%EndExpansion
\text{ }f\text{ ext. one convex},
\]
follows from Theorem \ref{Thm comme Sverak}.\smallskip
\end{proof}

We also have the following elementary properties.

\begin{proposition}
Let $1\leq k\leq n$ and $f:\Lambda^{k}
\rightarrow\mathbb{R}.$\smallskip

\textbf{(i)} Any ext. one convex function is locally Lipschitz.\smallskip

\textbf{(ii)} If $f$ is ext. one convex and $C^{2},$ for every $\xi\in\Lambda
^{k}$, $\alpha\in\Lambda^{k-1}$ and $\beta\in\Lambda^{1}$,
\[
\sum_{I,J\in\mathcal{T}_{k}^{n}}\frac{\partial
^{2}f\left(  \xi\right)  }{\partial\xi_{I}\partial\xi_{J}}(\alpha\wedge\beta)_{I}(\alpha\wedge\beta)_{J}\geq 0.
\]

\end{proposition}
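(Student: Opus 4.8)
The plan is to deduce (i) from the classical regularity of separately convex functions, and to obtain (ii) by a one-line differentiation.

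For (i), the key observation is that the whole standard basis of $\Lambda^{k}$ consists of $1$-divisible forms: for an increasing multi-index $I=(i_{1},\dots,i_{k})\in\mathcal{T}_{k}^{n}$ one may write
\[
e^{I}=\bigl(e^{i_{1}}\wedge\cdots\wedge e^{i_{k-1}}\bigr)\wedge e^{i_{k}},
\]
which is of the form $\alpha\wedge\beta$ with $\alpha\in\Lambda^{k-1}$ and $\beta\in\Lambda^{1}$. Taking these particular $\alpha,\beta$ in the definition of ext. one convexity, I get that $t\mapsto f(\xi+te^{I})$ is convex for every $\xi\in\Lambda^{k}$ and every $I\in\mathcal{T}_{k}^{n}$; in other words, once $\Lambda^{k}$ is identified with $\mathbb{R}^{N}$, $N=\binom{n}{k}$, via the coordinates $(\xi_{I})_{I\in\mathcal{T}_{k}^{n}}$, the function $f$ is separately convex. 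It then only remains to invoke the classical fact that a finite-valued separately convex function on $\mathbb{R}^{N}$ is locally Lipschitz (see, e.g., \cite{Dacorogna 2007}), which proves (i).

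For (ii), I would fix $\xi\in\Lambda^{k}$, $\alpha\in\Lambda^{k-1}$, $\beta\in\Lambda^{1}$, set $\eta=\alpha\wedge\beta\in\Lambda^{k}$ and consider $g(t)=f(\xi+t\eta)$. Since $f\in C^{2}$, so is $g$, and working in the coordinates $(\xi_{I})$ the chain rule gives
\[
g''(t)=\sum_{I,J\in\mathcal{T}_{k}^{n}}\frac{\partial^{2}f(\xi+t\eta)}{\partial\xi_{I}\,\partial\xi_{J}}\,\eta_{I}\,\eta_{J}.
\]
By ext. one convexity $g$ is convex, hence $g''(0)\geq 0$, and since $\eta_{I}=(\alpha\wedge\beta)_{I}$ this is exactly the claimed inequality.

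There is no genuine obstacle here: (ii) is immediate once the definition is unwound, and (i) reduces, via the elementary remark that the $1$-divisible directions already span $\Lambda^{k}$, to an imported (nontrivial but standard) fact about separately convex functions. The only point worth stating explicitly is that this last reduction is legitimate for every $k$ with $1\leq k\leq n$, including $k=1$, where $\alpha\in\Lambda^{0}\cong\mathbb{R}$ and $\alpha\wedge\beta$ simply runs over all of $\Lambda^{1}$.
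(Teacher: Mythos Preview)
Your proposal is correct and follows essentially the same approach as the paper: for (i) you reduce ext.\ one convexity to separate convexity via the $1$-divisibility of the basis vectors $e^{I}$ and then invoke the standard local Lipschitz result for separately convex functions, and for (ii) you compute $g''(0)$ for $g(t)=f(\xi+t\,\alpha\wedge\beta)$. The paper's proof is the same, only more terse.
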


\begin{proof}
\textbf{(i)} The fact that $f$ is locally Lipschitz follows from the
observation that any ext. one convex function is in fact separately convex.
These last functions are known to be locally Lipschitz (cf. Theorem 2.31 in
\cite{Dacorogna 2007}).\smallskip

\textbf{(ii)} We next assume that $f$ is $C^{2}.$ By definition the function%
\[
g:t\rightarrow g\left(  t\right)  =f\left(  \xi+t\,\alpha\wedge\beta\right)
\]
is convex for every $\xi\in\Lambda^{k},$ $\alpha\in\Lambda^{k-1}$ and
$\beta\in\Lambda^{1}.$ Since $f$ is $C^{2},$ our claim follows from the fact
that $g^{\prime\prime}\left(  0\right)  \geq0.$\smallskip
\end{proof}

We now give an equivalent formulation of ext. quasiconvexity, but before that
we need the following notation.

\begin{notation}
Let $\Omega\subset\mathbb{R}^n$ be a smooth, open set. We define
$$
W_{\delta,T}^{1,\infty}\left(  \Omega;\Lambda^{k}\right)=\left\{\omega\in W^{1,\infty}\left(  \Omega;\Lambda^{k}\right):\delta\omega=0\text{ in }\Omega\text{ and }\nu\wedge\omega=0\text{ on
}\partial\Omega\right\},
$$
where $\nu$ is the outward unit normal to $\partial\Omega$.
\end{notation}

\begin{proposition}
\label{Prop equiv quasiconvexe}Let $f:\Lambda^{k}\rightarrow\mathbb{R}$ be continuous. The following statements
are equivalent.\smallskip

\textbf{(i)} $f$ is \emph{ext. quasiconvex}.\smallskip

\textbf{(ii)} For every bounded smooth open set $\Omega\subset\mathbb{R}^{n}$,
$\psi\in W_{\delta,T}^{1,\infty
}\left(  \Omega;\Lambda^{k-1}\right)  $ and $\xi\in\Lambda^{k}$,
\[
\int_{\Omega}f\left(  \xi+d\psi\right)  \geq f\left(  \xi\right)
\operatorname*{meas}\Omega.
\]

\end{proposition}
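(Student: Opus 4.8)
The plan is to prove the two implications separately. The implication (ii) $\Rightarrow$ (i) is the easy direction: given a test form $\omega\in W_0^{1,\infty}(\Omega;\Lambda^{k-1})$ realizing the ordinary ext. quasiconvexity inequality, I first note that $\delta\omega=0$ need not hold, so $\omega$ itself is not an admissible competitor in (ii); instead I would argue via localization. Fix a cube $Q$ and a form $\omega\in W_{per}^{1,\infty}(Q;\Lambda^{k-1})$ (using Remark \ref{Remarque sur les def de convexite}(v)). The point is that $d\omega$ only depends on $\omega$ modulo closed forms, and in particular one can replace $\omega$ by $\omega - \delta\eta$ for a suitable $\eta$ without changing $d\omega$, where $\eta$ solves a Hodge-type system on $Q$ with periodic boundary conditions so that the modified form lies in $W_{\delta,T}^{1,\infty}$. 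More precisely I would invoke the Hodge–Morrey decomposition (available in $W^{1,\infty}$ on a smooth bounded domain, or by periodicity on the torus) to write any competitor as the sum of a $\delta$-free, tangential part plus a correction whose contribution to $d\omega$ vanishes; feeding the $\delta$-free tangential part into (ii) yields the classical ext. quasiconvexity inequality.

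For the main direction (i) $\Rightarrow$ (ii), suppose $f$ is ext. quasiconvex and let $\psi\in W_{\delta,T}^{1,\infty}(\Omega;\Lambda^{k-1})$ and $\xi\in\Lambda^k$ be given. The issue is that $\psi$ does not have zero boundary values, so it is not directly an admissible test form in the definition of ext. quasiconvexity. The strategy is to cut $\psi$ off near $\partial\Omega$: choose a cutoff $\varphi_\varepsilon$ equal to $1$ on $\Omega$ away from an $\varepsilon$-neighborhood of the boundary and $0$ near $\partial\Omega$, and consider $\omega_\varepsilon = \varphi_\varepsilon\psi \in W_0^{1,\infty}(\Omega;\Lambda^{k-1})$. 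Then $d\omega_\varepsilon = \varphi_\varepsilon\, d\psi + d\varphi_\varepsilon\wedge\psi$. Applying ext. quasiconvexity to $\omega_\varepsilon$ gives $\int_\Omega f(\xi + d\omega_\varepsilon) \ge f(\xi)\operatorname{meas}\Omega$. On the good set $d\omega_\varepsilon = d\psi$, and on the $\varepsilon$-collar the integrand is bounded (using that $f$ is continuous hence bounded on bounded sets, and $\|d\varphi_\varepsilon\wedge\psi\|_{L^\infty}$ is controlled by $\|\psi\|_{L^\infty}/\varepsilon$ times the collar width $\sim\varepsilon$, so the extra term stays in a fixed bounded set while the collar has measure $\to 0$). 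Letting $\varepsilon\to 0$, the boundary-layer contribution vanishes and we recover $\int_\Omega f(\xi+d\psi)\ge f(\xi)\operatorname{meas}\Omega$.

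The main obstacle I anticipate is the direction (ii) $\Rightarrow$ (i): making precise the claim that every admissible competitor for the classical definition can, without loss, be taken $\delta$-free and tangential. This requires a Hodge-decomposition argument with good ($W^{1,\infty}$ or $L^\infty$) bounds and care that modifying the form by an exact-plus-coexact correction neither changes $d\omega$ nor spoils the boundary condition; on the torus this is cleanest because the decomposition is orthogonal and the harmonic part is constant, but transferring from an arbitrary bounded open set to the torus (via Remark \ref{Remarque sur les def de convexite}(iv)–(v)) must be invoked explicitly. The cutoff argument in (i) $\Rightarrow$ (ii) is routine once one observes that $f$ continuous implies $f$ locally bounded, so the collar term is harmless in the limit.
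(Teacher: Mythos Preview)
There is a genuine gap in your (i)$\Rightarrow$(ii) argument. The assertion that ``$\|d\varphi_\varepsilon\wedge\psi\|_{L^\infty}$ is controlled by $\|\psi\|_{L^\infty}/\varepsilon$ times the collar width $\sim\varepsilon$'' is not a valid estimate: the collar width cannot enter an $L^\infty$ bound, and the honest pointwise estimate $|d\varphi_\varepsilon\wedge\psi|\lesssim\|\psi\|_{L^\infty}/\varepsilon$ blows up as $\varepsilon\to 0$. Since $f$ is only assumed continuous (no growth control), an unbounded integrand on the collar cannot be disposed of merely by the vanishing of the collar's measure. The cutoff approach \emph{can} be repaired, but only by invoking the tangential boundary condition $\nu\wedge\psi=0$, which you never use: $d\varphi_\varepsilon$ is a scalar multiple of the extended inward normal $N=\nabla\operatorname{dist}(\cdot,\partial\Omega)$, and since $N\wedge\psi$ is Lipschitz and vanishes on $\partial\Omega$ one has $|N\wedge\psi|(x)\le C\,\operatorname{dist}(x,\partial\Omega)$, giving $|d\varphi_\varepsilon\wedge\psi|\le C$ uniformly on the collar. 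With this correction the argument closes.

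The paper proceeds differently and more symmetrically: for both implications it replaces one test form by another with the same exterior derivative, using Hodge-type boundary value problems from \cite{Csato-Dac-Kneuss 2012} (Theorem~7.2 to pass from $\omega\in C_0^\infty$ to $\psi_\epsilon\in C_{\delta,T}^\infty$ with $d\psi_\epsilon=d\omega_\epsilon$, and Theorem~8.16 to pass from $\psi\in W_{\delta,T}^{1,\infty}$ to $\omega\in W_0^{1,2}$ with $d\omega=d\psi$), combined with a density step. A smaller caution on your (ii)$\Rightarrow$(i) sketch: you appeal to a Hodge--Morrey decomposition ``available in $W^{1,\infty}$'', but $L^\infty$ elliptic estimates generally fail, so this needs care; the paper avoids the issue by first approximating $\omega\in W_0^{1,\infty}$ by $\omega_\epsilon\in C_0^\infty$ with uniform gradient bounds and only then solving the (smooth) Hodge system, so that no $L^\infty$ estimate for the decomposition is ever required.
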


\begin{remark}\label{21.6.2013.3}
Given a function $f:\Lambda^{k} \rightarrow
\mathbb{R}$, the \emph{ext. quasiconvex envelope}, which is the largest ext
quasiconvex function below $f,$ is given by (as in Theorem 6.9 of
\cite{Dacorogna 2007})%
\begin{align*}
Q_{ext}f\left(  \xi\right)   &  =\inf\left\{  \frac{1}{\operatorname*{meas}%
\Omega}\int_{\Omega}f\left(  \xi+d\omega\right)  :\omega\in W_{0}^{1,\infty
}\left(  \Omega;\Lambda^{k-1}\right)  \right\}  \smallskip\\
&  =\inf\left\{  \frac{1}{\operatorname*{meas}\Omega}\int_{\Omega}f\left(
\xi+d\psi\right)  :\psi\in W_{\delta,T}^{1,\infty}\left(  \Omega;\Lambda
^{k-1}\right)  \right\}  .
\end{align*}
\end{remark}
\begin{proof}
\textbf{(ii) }$\Rightarrow$\textbf{ (i):} Let $\Omega\subset\mathbb{R}^{n}$ be a bounded smooth open set, $\xi\in\Lambda^{k}$ and let $\omega\in W_{0}^{1,\infty
}\left(  \Omega;\Lambda^{k-1}\right)$. Using density, we find $\omega_\epsilon\in C^\infty_{0}\left(  \Omega;\Lambda^{k-1}\right)$ such that
\begin{equation}\label{20.6.2013.1}
\sup_{\epsilon>0}\Vert\nabla\omega_\epsilon\Vert_{L^\infty}<\infty\text{ and }\omega_\epsilon\rightarrow\omega,\text{ in }W^{1,2
}\left(  \Omega;\Lambda^{k-1}\right).
\end{equation}
Appealing to Theorem 7.2 in
\cite{Csato-Dac-Kneuss 2012}, we now find $\psi_\epsilon\in C^\infty_{\delta,T}\left(
\Omega;\Lambda^{k-1}\right)$ such that%
\[
\left\{
\begin{array}
[c]{cl}%
d\psi_\epsilon=d\omega_\epsilon & \text{in }\Omega\smallskip\\
\delta\psi_\epsilon=0 & \text{in }\Omega\smallskip\\
\nu\wedge\psi_\epsilon=0 & \text{on }\partial\Omega.
\end{array}
\right.
\]
We use (\ref{20.6.2013.1}) to apply
dominated convergence theorem to obtain
\[
\int_{\Omega}f\left(  \xi+d\omega\right)  =\lim_{\epsilon\rightarrow0}\int_{\Omega}f\left(  \xi
+d\omega_{\epsilon}\right)=\lim_{\epsilon\rightarrow0}\int_{\Omega}f\left(  \xi
+d\psi_\epsilon\right)\geq f\left(  \xi\right)  \operatorname*{meas}\Omega,
\]
where we have used \textbf{(ii)} in the last step. Therefore, $f$ is ext. quasiconvex.\smallskip

\textbf{(i) }$\Rightarrow$\textbf{ (ii):} Let $\psi\in W_{\delta,T}^{1,\infty
}\left(  \Omega;\Lambda^{k-1}\right)  .$ Then, by Theorem 8.16 in
\cite{Csato-Dac-Kneuss 2012}, we can find $\omega\in W_{0}^{1,2}\left(
\Omega;\Lambda^{k-1}\right)$ such that%
\[
\left\{
\begin{array}
[c]{cl}%
d\omega=d\psi & \text{in }\Omega\smallskip\\
\omega=0 & \text{on }\partial\Omega.
\end{array}
\right.
\]
With a similar argument as above, we infer that%
\[
\int_{\Omega}f\left(  \xi+d\psi\right)  =\int_{\Omega}f\left(  \xi
+d\omega\right)  =\lim_{\epsilon\rightarrow0}\int_{\Omega}f\left(  \xi
+d\omega_{\epsilon}\right)  \geq f\left(  \xi\right)  \operatorname*{meas}%
\Omega,
\]
as claimed.\smallskip
\end{proof}

We finally have also another formulation of ext. polyconvexity.

\begin{proposition}
\label{Proposition equiv polyconvexite}Let $f:\Lambda^{k}\rightarrow\mathbb{R}.$ The following statements are
then equivalent.\smallskip

\textbf{(i)} The function $f$ is ext. polyconvex.\smallskip

\textbf{(ii)} For every $\xi\in\Lambda^{k},$ there exist $c_{s}=c_{s}\left(
\xi\right)  \in\Lambda^{ks},$ $1\leq s\leq\left[  n/k\right]  ,$ such that%
\[
f\left(  \eta\right)  \geq f\left(  \xi\right)  +\sum_{s=1}^{\left[
n/k\right]  }\left\langle c_{s}\left(  \xi\right)  ;\eta^{s}-\xi
^{s}\right\rangle ,\quad\text{for every }\eta\in\Lambda^{k}.
\]

\textbf{(iii)} Let%
\[
N=\sum_{s=1}^{\left[  n/k\right]}\binom{n}{sk}.
\]
For every $t_{i}\geq0$ with $\sum_{i=1}^{N+1}t_{i}=1$ and every $\xi_{i}%
\in\Lambda^{k}$ such that%
\[
\sum_{i=1}^{N+1}t_{i}\xi_{i}^{s}=\left(  \sum_{i=1}^{N+1}t_{i}\xi_{i}\right)
^{s},\text{\quad for every }1\leq s\leq\left[  n/k\right],
\]
the following inequality holds
\[
f\left(  \sum_{i=1}^{N+1}t_{i}\xi_{i}\right)  \leq\sum_{i=1}^{N+1}%
t_{i}f\left(  \xi_{i}\right)  .
\]

\end{proposition}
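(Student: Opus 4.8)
The plan is to prove the cycle $\textbf{(i)} \Rightarrow \textbf{(iii)} \Rightarrow \textbf{(ii)} \Rightarrow \textbf{(i)}$, following the scheme of the classical characterization of polyconvexity (Theorem 5.6 in \cite{Dacorogna 2007}), with the map $T(\xi) = (\xi, \xi^2, \ldots, \xi^m)$, $m = [n/k]$, taking over the role of $\xi \mapsto (\xi, \operatorname{adj}_2 \xi, \ldots)$. Set $V = \Lambda^k \times \Lambda^{2k} \times \cdots \times \Lambda^{mk}$, so $\dim V = N$, and observe that the identity $T\left(\sum_i t_i \xi_i\right) = \sum_i t_i T(\xi_i)$ in $V$ holds precisely when the ``minors'' relations $\left(\sum_i t_i \xi_i\right)^s = \sum_i t_i \xi_i^s$ of \textbf{(iii)} are satisfied. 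For $\textbf{(i)} \Rightarrow \textbf{(iii)}$, let $F$ be a convex function with $f = F \circ T$; given $t_i, \xi_i$ as in \textbf{(iii)} and writing $\bar\xi = \sum_i t_i \xi_i$, the hypothesis says exactly that $\sum_i t_i T(\xi_i) = T(\bar\xi)$, so Jensen's inequality for $F$ gives $f(\bar\xi) = F(T(\bar\xi)) \le \sum_i t_i F(T(\xi_i)) = \sum_i t_i f(\xi_i)$.

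For $\textbf{(iii)} \Rightarrow \textbf{(ii)}$, consider the graph $G = \{(T(\xi), f(\xi)) : \xi \in \Lambda^k\} \subset V \times \mathbb{R}$. Since $G$ is connected, Carath\'eodory's theorem (in the form valid for connected sets, which accounts for the number $N+1$ appearing in \textbf{(iii)}) expresses every point of $\operatorname{co} G$ as a convex combination of at most $N+1$ points of $G$. Define $P : V \to [-\infty, +\infty]$ by $P(X) = \inf\{\lambda : (X, \lambda) \in \operatorname{co} G\}$; it is convex, and the equivalence noted above together with \textbf{(iii)} gives $P(T(\xi)) = f(\xi)$ for every $\xi$ (the inequality $\le$ is trivial, and $\ge$ is exactly an instance of \textbf{(iii)} applied to any representation $T(\xi) = \sum t_i T(\xi_i)$ with at most $N+1$ terms). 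Granting the surjectivity statement $\operatorname{co}(T(\Lambda^k)) = V$ discussed below, one has $P(X) \le \sum t_i f(\xi_i) < +\infty$ for every $X$, so by convexity $P$ cannot take the value $-\infty$ anywhere (it is finite at each $T(\xi)$); thus $P$ is a finite, hence continuous, convex function on $V$ and has a subgradient $\ell = (c_1(\xi), \ldots, c_m(\xi)) \in V$ at the point $T(\xi)$. Taking the defining inequality of the subgradient at $Y = T(\eta)$ and using $P \circ T = f$ yields $f(\eta) \ge f(\xi) + \sum_{s=1}^m \langle c_s(\xi); \eta^s - \xi^s \rangle$, which is \textbf{(ii)}.

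For $\textbf{(ii)} \Rightarrow \textbf{(i)}$, define $F : V \to [-\infty, +\infty]$ by
\[
F(X) = \sup_{\xi \in \Lambda^k} \Big( f(\xi) + \sum_{s=1}^m \langle c_s(\xi); X_s - \xi^s \rangle \Big), \qquad X = (X_1, \ldots, X_m).
\]
Being a supremum of affine functions, $F$ is convex; evaluating at $X = T(\eta)$ and using \textbf{(ii)} gives $F(T(\eta)) \le f(\eta)$, while the choice $\xi = \eta$ in the supremum gives the reverse inequality, so $f = F \circ T$. Finally $F > -\infty$ trivially, and since $\operatorname{co}(T(\Lambda^k)) = V$ every $X \in V$ is a convex combination of points of $T(\Lambda^k)$, on which $F$ is finite, so convexity forces $F < +\infty$ everywhere; hence $F$ is real-valued and exhibits $f$ as ext. polyconvex.

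The one non-formal ingredient, used twice above, is the algebraic fact that $\operatorname{co}(T(\Lambda^k)) = \Lambda^k \times \Lambda^{2k} \times \cdots \times \Lambda^{mk}$; equivalently, that for every nonzero $(c_1, \ldots, c_m) \in V$ the polynomial $\xi \mapsto \sum_{s=1}^m \langle c_s; \xi^s \rangle$ is unbounded below on $\Lambda^k$. This is what replaces the minor-relation bookkeeping of the matrix case, and it is where the structure of exterior powers enters. The plan to prove it is to reach a prescribed point of $V$ by adjusting the components of $T$ in turn: the first component is linear and immediate, and higher components are corrected by symmetric perturbations $\xi \mapsto \tfrac12 T(\xi+w) + \tfrac12 T(\xi-w)$, which leave the lower components unchanged and shift $\xi^s$ by wedge powers of $w$ — here one uses that $k$ must be even for any component beyond the first to exist, that the cone $\{w \wedge \cdots \wedge w\}$ spans $\Lambda^{sk}$, and that the relevant quadratic and higher forms $\langle c; \cdot \rangle$ are indefinite. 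If this surjectivity is already recorded in the literature on exterior algebra it can simply be invoked; otherwise it must be proved here, and it is the main obstacle of the argument.
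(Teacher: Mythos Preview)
Your scheme is essentially that of the paper (and of Theorem~5.6 in \cite{Dacorogna 2007}). The paper organizes it as $(\mathrm{i})\Leftrightarrow(\mathrm{ii})$ and $(\mathrm{i})\Leftrightarrow(\mathrm{iii})$ rather than a cycle; in particular, it obtains $(\mathrm{i})\Rightarrow(\mathrm{ii})$ in one line from a subgradient of the already-given convex $F$, so your Carath\'eodory construction of $P$ in $(\mathrm{iii})\Rightarrow(\mathrm{ii})$ is correct but does more work than necessary. Your $(\mathrm{i})\Rightarrow(\mathrm{iii})$ and $(\mathrm{ii})\Rightarrow(\mathrm{i})$ match the paper's arguments verbatim.

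The genuine gap is the surjectivity claim $\operatorname{co}\bigl(T(\Lambda^{k})\bigr)=V$. It is \emph{false} whenever $k$ is odd and $[n/k]\ge 2$: then $\xi^{s}=0$ for every $s\ge 2$, so $T(\Lambda^{k})\subset \Lambda^{k}\times\{0\}\times\cdots\times\{0\}$, a proper affine subspace of $V$. Your parenthetical ``$k$ must be even for any component beyond the first to exist'' shows you sense this, but you still assert and use $\operatorname{co}(T(\Lambda^{k}))=V$ unconditionally in both $(\mathrm{iii})\Rightarrow(\mathrm{ii})$ and $(\mathrm{ii})\Rightarrow(\mathrm{i})$. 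The repair is easy and makes surjectivity unnecessary: extend $F$ (respectively $P$) from the affine hull $A$ of $T(\Lambda^{k})$ to all of $V$ by composing with the orthogonal projection onto $A$, which gives a real-valued convex extension. (The paper simply does not comment on finiteness of $F$ in its $(\mathrm{ii})\Rightarrow(\mathrm{i})$ step, treating it as routine; for $(\mathrm{iii})\Rightarrow(\mathrm{i})$ it just cites \cite{Dacorogna 2007}.) For $k$ even your perturbation sketch toward surjectivity is plausible but incomplete: the symmetric average $\tfrac12 T(\xi+w)+\tfrac12 T(\xi-w)$ has third component $\xi^{3}+3\,\xi\wedge w^{2}$, not $\xi^{3}+w^{3}$, so the higher-order steps are more entangled than ``shift $\xi^{s}$ by wedge powers of $w$'' suggests, and the spanning/indefiniteness assertions would also need justification.
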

\begin{proof}
\textbf{(i) }$\Rightarrow$\textbf{ (ii):} Since $f$ is ext. polyconvex, there
exists a convex function $F$ such that%
\[
f\left(  \xi\right)  =F\left(  \xi,\xi^{2},\ldots,\xi^{\left[  n/k\right]
}\right)  .
\]
$F$ being convex, for every $\xi\in\Lambda^{k}$, there exists $c_{s}%
=c_{s}\left(  \xi\right)  \in\Lambda^{ks},$ $1\leq s\leq\left[  n/k\right]  ,$
such that, for all $\eta\in\Lambda^{k}$,
\[
f\left(  \eta\right)  -f\left(  \xi\right)  =F\left(  \eta,\ldots
,\eta^{\left[  n/k\right]  }\right)  -F\left(  \xi,\ldots,\xi^{\left[
n/k\right]  }\right)  \geq\sum_{s=1}^{\left[  n/k\right]  }\left\langle
c_{s};\eta^{s}-\xi^{s}\right\rangle,
\]
as claimed.\smallskip

\textbf{(ii) }$\Rightarrow$\textbf{ (i):} Conversely assume that the inequality
is valid and, for $\theta=\left(  \theta_{1},\ldots,\theta_{\left[
n/k\right]  }\right)  \in\Lambda^{k}\times\cdots\times\Lambda^{\left[
n/k\right]  k}$, we define
\[
F\left(  \theta\right)  =\sup_{\xi\in\Lambda^{k}}\left\{  f\left(  \xi\right)
+%
%TCIMACRO{\tsum \nolimits_{s=1}^{\left[  n/k\right]  }}%
%BeginExpansion
{\textstyle\sum\nolimits_{s=1}^{\left[  n/k\right]  }}
%EndExpansion
\left\langle c_{s}\left(  \xi\right)  ;\theta_{s}-\xi^{s}\right\rangle
\right\}  .
\]
Clearly $F$ is convex as a supremum of affine functions. Moreover if%
\[
\theta=\left(  \eta,\ldots,\eta^{\left[  n/k\right]  }\right),
\]
then, in view of the inequality, the supremum is attained by $\xi=\eta,$ i.e.%
\[
f\left(  \eta\right)  =F\left(  \eta,\ldots,\eta^{\left[  n/k\right]
}\right),
\]
and thus $f$ is ext. polyconvex.\smallskip

\textbf{(i) }$\Rightarrow$\textbf{ (iii):} Since $f$ is ext. polyconvex, there
exists a convex function $F$ such that%
\[
f\left(  \xi\right)  =F\left(  \xi,\xi^{2},\ldots,\xi^{\left[  n/k\right]
}\right)  .
\]
The convexity of $F$ implies that%
\begin{align*}
f\left(  \sum_{i=1}^{N+1}t_{i}\xi_{i}\right)   &  =F\left(  \left(  \sum
_{i=1}^{N+1}t_{i}\xi_{i}\right)  ,\ldots,\left(  \sum_{i=1}^{N+1}t_{i}\xi
_{i}\right)  ^{\left[  n/k\right]  }\right)  \smallskip\\
&  =F\left(  \sum_{i=1}^{N+1}t_{i}\left(  \xi_{i},\ldots,\xi_{i}^{\left[
n/k\right]  }\right)  \right)  \leq\sum_{i=1}^{N+1}t_{i}F\left(  \xi
_{i},\ldots,\xi_{i}^{\left[  n/k\right]  }\right)  \smallskip\\
&  =\sum_{i=1}^{N+1}t_{i}f\left(  \xi_{i}\right)  .
\end{align*}

\textbf{(iii) }$\Rightarrow$\textbf{ (i):} The proof is based on
Carath\'{e}odory theorem and follows exactly as in Theorem 5.6 in
\cite{Dacorogna 2007}.
\end{proof}

\section{The quasiaffine case}

\subsection{Some preliminary results}

We start with two elementary results.

\begin{lemma}
\label{Lemme elem 1-affine}Let $f:\Lambda^{k}
\rightarrow\mathbb{R}$ be ext. one affine with $1\leq k\leq n.$ Then%
\[
f\left(  \xi+\sum_{i=1}^{N}t_{i}\,\alpha_{i}\wedge a\right)
=f\left(  \xi\right)  +\sum_{i=1}^{N}t_{i}\left[  f\left(  \xi+\alpha
_{i}\wedge a\right)  -f\left(  \xi\right)  \right],
\]
for every $t_{i}\in\mathbb{R},\,\xi\in\Lambda^{k},\,\alpha_{i}\in\Lambda
^{k-1},\,a\in\Lambda^{1}.$
\end{lemma}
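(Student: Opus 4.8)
The plan is to reduce the statement to the single assertion that, for every fixed $\xi\in\Lambda^{k}$ and $a\in\Lambda^{1}$, the map
\[
h:\Lambda^{k-1}\rightarrow\mathbb{R},\qquad h\left(\alpha\right)=f\left(\xi+\alpha\wedge a\right),
\]
is an affine function of $\alpha$. Indeed, once $h$ is known to be affine we may write $h=\ell+h\left(0\right)$ with $\ell$ linear, and then
\[
h\Bigl(\sum_{i=1}^{N}t_{i}\alpha_{i}\Bigr)=h\left(0\right)+\ell\Bigl(\sum_{i=1}^{N}t_{i}\alpha_{i}\Bigr)=h\left(0\right)+\sum_{i=1}^{N}t_{i}\,\ell\left(\alpha_{i}\right)=h\left(0\right)+\sum_{i=1}^{N}t_{i}\bigl[h\left(\alpha_{i}\right)-h\left(0\right)\bigr],
\]
which, upon unravelling the definition of $h$ and using the crucial identity $\sum_{i}t_{i}\,\alpha_{i}\wedge a=\bigl(\sum_{i}t_{i}\alpha_{i}\bigr)\wedge a$ together with $h\left(0\right)=f\left(\xi\right)$, is exactly the claimed formula. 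No constraint $\sum_{i}t_{i}=1$ is needed here: the base point $\xi$ absorbs the discrepancy.

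The one step that genuinely uses the hypothesis is showing that $h$ is affine along every line of $\Lambda^{k-1}$. Given $\alpha_{0},\gamma\in\Lambda^{k-1}$, put $\xi'=\xi+\alpha_{0}\wedge a\in\Lambda^{k}$; then
\[
t\longmapsto h\left(\alpha_{0}+t\gamma\right)=f\bigl(\xi'+t\,(\gamma\wedge a)\bigr)
\]
is affine in $t$ directly by the definition of ext. one affineness, applied with the $k$-form $\xi'$, the $\left(k-1\right)$-form $\gamma$ and the $1$-form $a$. Thus the restriction of $h$ to an arbitrary line is affine.

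It then remains to invoke the elementary fact that a real-valued function on a finite-dimensional vector space whose restriction to every line is affine is itself affine, with no regularity assumed. I would argue this in two strokes: first on a plane — being affine in each of two coordinates separately forces $h\left(x,y\right)=cxy+(\text{affine})$, and then affineness along the diagonal $t\mapsto h\left(t,t\right)$ forces $c=0$; and then, from the planar case, $h-h\left(0\right)$ is additive (restrict to the plane spanned by any two vectors) and positively homogeneous (restrict to a line through the origin), hence $\mathbb{R}$-linear. This passage from ``affine on each line'' to ``globally affine'' without continuity is the only mild subtlety; everything else is bookkeeping resting on the observation that, for fixed $a$, the admissible directions $\alpha\wedge a$ form a linear subspace of $\Lambda^{k}$, so one never leaves the class of directions allowed by the hypothesis.
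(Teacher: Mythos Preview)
Your proof is correct. Both your argument and the paper's reduce to showing that, for fixed $\xi$ and $a$, the map $h(\alpha)=f(\xi+\alpha\wedge a)$ is affine; the difference lies in how this is established. The paper proves additivity of $h-h(0)$ directly (its Step~2) by writing $f(\xi+s\,\alpha\wedge a+\beta\wedge a)$ in two ways via Step~1 and then letting $s\to 0$, which uses the continuity of $f$ (legitimate, since ext.\ one affine functions are separately convex, hence locally Lipschitz); it then finishes by an explicit induction on $N$. Your route---affine on every line $\Rightarrow$ affine on each $2$-plane via the $cxy$ argument, hence additive, hence linear after subtracting $h(0)$---is a clean repackaging of the same content that dispenses with the limit and with the induction. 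What your organization buys is that the continuity of $f$ is never invoked; what the paper's more explicit computation buys is that Step~2 is recorded as a standalone identity, which it then reuses (via its Corollary) in the proof of the quasiaffine classification theorem.
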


\begin{proof}
\emph{Step 1.} Since $f$ is ext. one affine,
\[
f\left(  \xi+t\alpha\wedge a\right)  =f\left(  \xi\right)  +t\left[  f\left(
\xi+\alpha\wedge a\right)  -f\left(  \xi\right)  \right]  .
\]

\emph{Step 2.} Let us first prove that%
\[
f\left(  \xi+\alpha\wedge a+\beta\wedge a\right)  +f\left(  \xi\right)
=f\left(  \xi+\alpha\wedge a\right)  +f\left(  \xi+\beta\wedge a\right)  .
\]
First assume that $s\neq0.$ We have, using Step 1, that%
\begin{align*}
f\left(  \xi+s\,\alpha\wedge a+\beta\wedge a\right)=& f\left(  \xi+s\,\left(  \alpha+\frac{1}{s}\,\beta\right)  \wedge a\right)\\
=& f\left(  \xi\right)  +s\left[  f\left(  \xi+\left(  \alpha+\frac{1}%
{s}\,\beta\right)  \wedge a\right)  -f\left(  \xi\right)  \right],
\end{align*}
and hence, using Step 1 again,%
\begin{align*}
&  f\left(  \xi+s\,\alpha\wedge a+\beta\wedge a\right)  \smallskip\\
&  =f\left(  \xi\right)  +s\left\{  f\left(  \xi+\alpha\wedge a\right)
+\frac{1}{s}\left[  f\left(  \xi+\alpha\wedge a+\beta\wedge a\right)
-f\left(  \xi+\alpha\wedge a\right)  \right]  -f\left(  \xi\right)  \right\}
\smallskip\\
&  =f\left(  \xi\right)  +s\left[  f\left(  \xi+\alpha\wedge a\right)
-f\left(  \xi\right)  \right]  +\left[  f\left(  \xi+\alpha\wedge
a+\beta\wedge a\right)  -f\left(  \xi+\alpha\wedge a\right)  \right]  .
\end{align*}
Since $f$ is continuous, we have the result by letting $s\rightarrow
0.$\smallskip

\emph{Step 3.} We now prove the claim. We proceed by induction. The case $N=1$
is just Step 1. We first use the induction hypothesis to write%
\begin{align*}
&  f\left(  \xi+\sum_{i=1}^{N}t_{i}\,\alpha_{i}\wedge a\right)
=f\left(  \xi+t_{N}\,\alpha_{N}\wedge a+\sum_{i=1}^{N-1}%
t_{i}\,\alpha_{i}\wedge a\right)  \smallskip\\
&  =f\left(  \xi+t_{N}\,\alpha_{N}\wedge a\right)  +\sum_{i=1}^{N-1}%
t_{i}\left[  f\left(  \xi+t_{N}\,\alpha_{N}\wedge a+\alpha_{i}\wedge a\right)
-f\left(  \xi+t_{N}\,\alpha_{N}\wedge a\right)  \right]  .
\end{align*}
We then appeal to Step 1 to get%
\begin{align*}
&  f\left(  \xi+\sum\nolimits_{i=1}^{N}t_{i}\,\alpha_{i}\wedge a\right)
=f\left(  \xi\right)  +t_{N}\left[  f\left(  \xi+\alpha_{N}\wedge a\right)
-f\left(  \xi\right)  \right]  \smallskip\\
&  +\sum_{i=1}^{N-1}t_{i}\left\{
\begin{array}
[c]{c}%
f\left(  \xi+\alpha_{i}\wedge a\right)  +t_{N}\left[  f\left(  \xi+\alpha
_{i}\wedge a+\alpha_{N}\wedge a\right)  -f\left(  \xi+\alpha_{i}\wedge
a\right)  \right] \\
-f\left(  \xi\right)  -t_{N}\left[  f\left(  \xi+\alpha_{N}\wedge a\right)
-f\left(  \xi\right)  \right]
\end{array}
\right\},  \smallskip
\end{align*}
and thus%
\begin{align*}
& f\left(  \xi+\sum\nolimits_{i=1}^{N}t_{i}\,\alpha_{i}\wedge a\right)
=f\left(  \xi\right)  +\sum_{i=1}^{N}t_{i}\left[  f\left(  \xi+\alpha
_{i}\wedge a\right)  -f\left(  \xi\right)  \right]  \smallskip\\
&+t_{N}\sum_{i=1}^{N-1}t_{i}\left\{
f\left(  \xi+\alpha_{i}\wedge a+\alpha_{N}\wedge a\right)  -f\left(
\xi+\alpha_{i}\wedge a\right)
-f\left(  \xi+\alpha_{N}\wedge a\right)  +f\left(  \xi\right)
\right\}.
\end{align*}
Appealing to Step 2, we see that the last term vanishes and therefore the
induction reasoning is complete and this achieves the proof of the
lemma.\smallskip
\end{proof}

We have as an immediate consequence the following result.

\begin{corollary}
\label{Corrolaire sur sommes indep}Let $f:\Lambda^{k}\rightarrow\mathbb{R}$ be ext. one affine with $1\leq k\leq n.$
Then%
\begin{align*}
&  \left[  f\left(  \xi+\alpha\wedge a+\beta\wedge b\right)  -f\left(
\xi\right)  \right]  +\left[  f\left(  \xi+\beta\wedge a+\alpha\wedge
b\right)  -f\left(  \xi\right)  \right]  \smallskip\\
&  =\left[  f\left(  \xi+\alpha\wedge a\right)  -f\left(  \xi\right)  \right]
+\left[  f\left(  \xi+\beta\wedge a\right)  -f\left(  \xi\right)  \right]
\smallskip\\
&  +\left[  f\left(  \xi+\alpha\wedge b\right)  -f\left(  \xi\right)  \right]
+\left[  f\left(  \xi+\beta\wedge b\right)  -f\left(  \xi\right)  \right],
\end{align*}
for every $\xi\in\Lambda^{k},\,\alpha,\beta\in\Lambda^{k-1},\,a,b\in
\Lambda^{1}.$
\end{corollary}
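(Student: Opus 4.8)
The plan is to freeze $\xi\in\Lambda^{k}$, $\alpha,\beta\in\Lambda^{k-1}$ once and for all and to view both sides of the claimed identity as values of the single function of two $1$-forms
\[
F:\Lambda^{1}\times\Lambda^{1}\rightarrow\mathbb{R},\qquad F\left(  a,b\right)
=f\left(  \xi+\alpha\wedge a+\beta\wedge b\right)  .
\]
Since $F\left(  a,0\right)  =f\left(  \xi+\alpha\wedge a\right)  $, $F\left(
0,a\right)  =f\left(  \xi+\beta\wedge a\right)  $, $F\left(  b,a\right)
=f\left(  \xi+\beta\wedge a+\alpha\wedge b\right)  $ and $F\left(  0,0\right)
=f\left(  \xi\right)  $, a direct rearrangement shows that the assertion of the corollary is equivalent to
\[
F\left(  a,b\right)  +F\left(  b,a\right)  +2F\left(  0,0\right)  =F\left(
a,0\right)  +F\left(  0,a\right)  +F\left(  b,0\right)  +F\left(  0,b\right)  .
\]
So it suffices to establish this last relation.

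First I would show that $F$ is \emph{biaffine}, i.e.\ affine in $a$ for each fixed $b$ and affine in $b$ for each fixed $a$. Indeed, fixing $b$ and taking any line $a=a_{0}+t\,a_{1}$, we have $F\left(  a_{0}+t\,a_{1},b\right)
=f\left(  \zeta+t\,\alpha\wedge a_{1}\right)  $ with $\zeta=\xi+\beta\wedge
b+\alpha\wedge a_{0}$, and since $\alpha\wedge a_{1}$ is $1$-divisible this is an affine function of $t$ because $f$ is ext. one affine (one may also invoke Lemma \ref{Lemme elem 1-affine} directly here); the same argument applies in the variable $b$. A biaffine function on a product of two real vector spaces decomposes as
\[
F\left(  a,b\right)  =F\left(  0,0\right)  +\bigl[  F\left(  a,0\right)
-F\left(  0,0\right)  \bigr]  +\bigl[  F\left(  0,b\right)  -F\left(
0,0\right)  \bigr]  +B\left(  a,b\right)  ,
\]
where $B\left(  a,b\right)  :=F\left(  a,b\right)  -F\left(  a,0\right)
-F\left(  0,b\right)  +F\left(  0,0\right)  $ is bilinear.

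The key step is to compute the diagonal of $B$. Applying Lemma \ref{Lemme elem 1-affine} with base point $\xi$, the $1$-form $a$, the $\left(  k-1\right)$-forms $\alpha_{1}=\alpha$, $\alpha_{2}=\beta$ and coefficients $t_{1}=t_{2}=1$ (so that $\alpha_{1}\wedge a+\alpha_{2}\wedge a=\left(  \alpha+\beta\right)
\wedge a$), we obtain
\[
f\left(  \xi+\alpha\wedge a+\beta\wedge a\right)  =f\left(  \xi\right)
+\bigl[  f\left(  \xi+\alpha\wedge a\right)  -f\left(  \xi\right)  \bigr]
+\bigl[  f\left(  \xi+\beta\wedge a\right)  -f\left(  \xi\right)  \bigr]  ,
\]
which says precisely that $B\left(  a,a\right)  =F\left(  a,a\right)  -F\left(
a,0\right)  -F\left(  0,a\right)  +F\left(  0,0\right)  =0$ for every $a\in\Lambda^{1}$. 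Since $B$ is bilinear and vanishes on the diagonal, polarization gives $0=B\left(  a+b,a+b\right)  =B\left(  a,b\right)  +B\left(  b,a\right)$, i.e.\ $B$ is antisymmetric. Writing out $B\left(  a,b\right)  +B\left(
b,a\right)  =0$ with the definition of $B$ yields exactly the displayed relation for $F$, and hence the corollary.

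The only genuinely non-formal point is that $\alpha\wedge a+\beta\wedge b$ is in general \emph{not} $1$-divisible, so one cannot apply ext. one affinity directly along the direction joining the two sides; the purpose of the biaffine structure, combined with the vanishing of $B$ on the diagonal (which is where Lemma \ref{Lemme elem 1-affine}, rather than merely the definition of ext. one affinity, is needed), is exactly to force the cancellation of the ``cross term'' $B\left(  a,b\right)  +B\left(  b,a\right)  $. Everything else is elementary bilinear algebra and the two bookkeeping translations between $F$ and $f$.
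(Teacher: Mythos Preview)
Your proof is correct. The paper gives no proof at all, declaring the corollary an ``immediate consequence'' of Lemma~\ref{Lemme elem 1-affine}; your repackaging via the biaffine function $F$ and the polarization identity $B(a+b,a+b)=0$ is a clean and fully explicit way to make that immediacy precise, and it uses exactly the ingredient (Step~2 of Lemma~\ref{Lemme elem 1-affine}, i.e.\ $B(a,a)=0$) that the paper has in mind.
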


\subsection{The main theorem}

\begin{theorem}
\label{Thm principal quasiaffine}Let $1\leq k\leq n$ and $f:\Lambda^{k}\rightarrow\mathbb{R}.$ The following statements are
then equivalent.\smallskip

\textbf{(i)} $f$ is ext. polyaffine.\smallskip

\textbf{(ii)} $f$ is ext. quasiaffine.\smallskip

\textbf{(iii)} $f$ is ext. one affine.\smallskip

\textbf{(iv)} For every $0\leq s\leq\left[
n/k\right]$, there exist $c_{s}\in\Lambda^{ks}$ such that,
\[
f\left(  \xi\right)  =\sum_{s=0}^{\left[  n/k\right]  }\left\langle c_{s}%
;\xi^{s}\right\rangle,\text{ for every }\xi\in\Lambda^{k}.
\]

\end{theorem}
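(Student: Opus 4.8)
The plan is to prove the cycle $(iv)\Rightarrow(i)\Rightarrow(ii)\Rightarrow(iii)\Rightarrow(iv)$, where the first three steps are immediate consequences of the theory already developed and $(iii)\Rightarrow(iv)$ contains all the work. For $(iv)\Rightarrow(i)$ I would exhibit the affine function explicitly: if $f(\xi)=\sum_{s=0}^{[n/k]}\langle c_{s};\xi^{s}\rangle$, set $F(\theta_{1},\dots,\theta_{[n/k]})=c_{0}+\sum_{s=1}^{[n/k]}\langle c_{s};\theta_{s}\rangle$ on $\Lambda^{k}\times\cdots\times\Lambda^{[n/k]k}$; this $F$ is affine and $F(\xi,\xi^{2},\dots,\xi^{[n/k]})=f(\xi)$, so $f$ is ext. polyaffine. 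For $(i)\Rightarrow(ii)$: an affine $F$ is at the same time convex and concave, so $f$ and $-f$ are both ext. polyconvex, hence both ext. quasiconvex by Theorem \ref{Thm general sur poly quasi ...}(i) — which is precisely the assertion that $f$ is ext. quasiaffine (such an $f$ being a polynomial, the measurability and local boundedness required in the definition are automatic). For $(ii)\Rightarrow(iii)$: in the same way $f$ ext. quasiaffine forces $f$ and $-f$ to be ext. quasiconvex, hence both ext. one convex by Theorem \ref{Thm general sur poly quasi ...}(i), that is, $f$ is ext. one affine.

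The substance is $(iii)\Rightarrow(iv)$, and I would first reduce it to a statement about polynomials. Every coordinate covector $e^{I}$, $I\in\mathcal{T}_{k}^{n}$, is $1$-divisible, since $e^{I}=(e^{i_{1}}\wedge\cdots\wedge e^{i_{k-1}})\wedge e^{i_{k}}$; hence moving $\xi$ along the single coordinate $\xi_{I}$ is moving along a $1$-divisible direction, so ext. one affineness makes $f$ separately affine in the coordinates $\xi_{I}$. A separately affine function on a finite-dimensional space is necessarily a multilinear polynomial in its arguments (by induction on the number of variables, recovering the coefficients from the values at $\{0,1\}$-vectors); thus $f$ is a polynomial, in particular $C^{\infty}$. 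Write $f=\sum_{s\ge 0}f_{s}$ with $f_{s}$ homogeneous of degree $s$. Applying ext. one affineness to $t\mapsto f(\lambda\xi+t\,\alpha\wedge\beta)$, with $\lambda$ an auxiliary scalar, produces a polynomial in $(\lambda,t)$ affine in $t$ for every $\lambda$; hence the coefficient of each $t^{j}$ with $j\ge 2$, a polynomial in $\lambda$, vanishes identically, and collecting the terms bihomogeneous of total degree $m$ shows that $t\mapsto f_{m}(\xi+t\,\alpha\wedge\beta)$ is affine for all $\xi,\alpha,\beta$. So it remains to prove: a homogeneous ext. one affine polynomial $f_{s}$ of degree $s$ is of the form $\xi\mapsto\langle c_{s};\xi^{s}\rangle$ for some $c_{s}\in\Lambda^{ks}$.

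The cases $s=0$ ($f_{0}$ constant) and $s=1$ ($f_{1}$ linear) are trivial, so assume $s\ge 2$. Here I would use two facts, both coming from the second-order form of ext. one affineness, namely $\sum_{I,J}\frac{\partial^{2}f_{s}}{\partial\xi_{I}\partial\xi_{J}}(\xi)\,(\alpha\wedge\beta)_{I}(\alpha\wedge\beta)_{J}=0$ for all $\xi,\alpha,\beta$. First, $e^{I_{a}}+e^{I_{b}}$ is $1$-divisible whenever $I_{a}\cap I_{b}\ne\emptyset$ (take the divisor to be $e^{\ell}$ with $\ell\in I_{a}\cap I_{b}$), and evaluating the second-order identity at such a direction and at $\xi=\sum_{j\ne a,b}e^{I_{j}}$ shows that the coefficient of the monomial $\xi_{I_{1}}\cdots\xi_{I_{s}}$ in $f_{s}$ vanishes as soon as two of the $I_{j}$ overlap; thus $f_{s}$ is supported only on the ``disjoint'' monomials $\xi_{I_{1}}\cdots\xi_{I_{s}}$ with $I_{1},\dots,I_{s}$ pairwise disjoint — exactly the monomials appearing in $\xi^{s}=\xi\wedge\cdots\wedge\xi$ — and in particular $f_{s}\equiv 0$ if $sk>n$. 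Second, Corollary \ref{Corrolaire sur sommes indep} (with Lemma \ref{Lemme elem 1-affine} handling the combinatorics of many terms) forces the coefficients of $f_{s}$ inside each support block $K=I_{1}\cup\cdots\cup I_{s}\in\mathcal{T}_{sk}^{n}$ to satisfy the same alternating/sign relations that the wedge product imposes on the components of $\xi^{s}$; hence $f_{s}$ restricted to that block is a scalar multiple of the $K$-component of $\xi^{s}$ (which forces $f_{s}\equiv 0$ when $k$ is odd, since then $\xi^{s}=0$). Collecting these scalars over all blocks $K$ produces $c_{s}\in\Lambda^{ks}$ with $f_{s}(\xi)=\langle c_{s};\xi^{s}\rangle$. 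Once $f$ is known to be a polynomial, this entire step is the exterior-algebra transcription of the proof of the quasiaffine case for the gradient in \cite{Dacorogna 2007}, with $k$-covectors and their exterior powers in place of matrices and their minors; alternatively one can argue by induction on $n$, using $\Lambda^{k}(\mathbb{R}^{n})=\Lambda^{k}(\mathbb{R}^{n-1})\oplus(\Lambda^{k-1}(\mathbb{R}^{n-1})\wedge e^{n})$ and peeling off the dependence on $e^{n}$.

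The hard part is this last step — converting ``homogeneous, multilinear in the $\xi_{I}$, affine along every $1$-divisible direction'' into ``a contraction of the $s$-th exterior power.'' Coordinate directions alone yield only separate affineness; the genuinely new information comes from the non-coordinate directions $\alpha\wedge\beta$, and the delicate points are the sign and orientation bookkeeping in the exterior algebra needed to identify the surviving coefficients with those of $\langle c_{s};\xi^{s}\rangle$, together with the two degenerate regimes ($k$ odd, and $sk>n$) in which one must instead conclude that $f_{s}$ vanishes identically.
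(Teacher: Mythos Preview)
Your proposal is correct and follows essentially the same architecture as the paper: the easy implications $(iv)\Rightarrow(i)\Rightarrow(ii)\Rightarrow(iii)$ are handled identically, and for $(iii)\Rightarrow(iv)$ both you and the paper (a) reduce to homogeneous ext.\ one affine polynomials $f_{s}$, (b) kill the monomials $\xi_{I^{1}}\cdots\xi_{I^{s}}$ with overlapping multi-indices by exploiting that $e^{I^{p}}+e^{I^{q}}$ is $1$-divisible when $I^{p}\cap I^{q}\neq\emptyset$, and (c) extract the alternating sign relations among the surviving coefficients via Corollary~\ref{Corrolaire sur sommes indep}, identifying $f_{s}$ with $\langle c_{s};\xi^{s}\rangle$.

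The only genuine difference is in step (a). The paper obtains the polynomial structure by induction on the ambient dimension $n$, peeling off the coordinates containing $e^{1}$ via Lemma~\ref{Lemme elem 1-affine} and invoking the inductive hypothesis on $\Lambda^{k}(\{e^{1}\}^{\perp})$; this yields the intermediate degree bound $n$. You instead observe directly that each $e^{I}$ is $1$-divisible, so ext.\ one affineness forces $f$ to be separately affine in every $\xi_{I}$, hence a multilinear polynomial (degree at most $\binom{n}{k}$). Your route is a bit more elementary and avoids the induction, at the cost of a weaker preliminary degree bound that is in any case superseded once overlapping monomials are eliminated. In step (b) you phrase the vanishing via the second-derivative identity rather than via Lemma~\ref{Lemme elem 1-affine} as the paper does, but the content is the same computation. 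Your treatment of step (c) is sketchier than the paper's explicit transposition argument in its Step~3.2, but you correctly identify Corollary~\ref{Corrolaire sur sommes indep} as the key tool and the sign bookkeeping as the delicate point, which is exactly what the paper carries out in detail.
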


\begin{remark}
When $k$ is odd (since then $\xi^{s}=0$ for every $s\geq2$) or when $2k>n$ (in
particular when $k=n$ or $k=n-1$), then all the statements are equivalent to
$f$ being affine.
\end{remark}
\begin{proof}
The statements%
\[
(i)\;\Rightarrow\;(ii)\;\Rightarrow\;(iii)
\]
follow at once from Theorem \ref{Thm general sur poly quasi ...}. The
statement%
\[
(iv)\;\Rightarrow\;(i)
\]
is a direct consequence of the definition of ext. polyconvexity. So it only
remains to prove%
\[
(iii)\;\Rightarrow\;(iv).
\]
We divide the proof into three steps.\smallskip

\emph{Step 1.} We first prove that $f$ is a polynomial of degree at
most $n$ and is of the form%
\begin{equation}
f\left(  \xi\right)  =\sum_{s=0}^{n}f_{s}\left(  \xi\right),
\label{(1) Etape 1.1 thm principal}%
\end{equation}
where, for each $s=0,\ldots,n$, $f_{s}$ is a homogeneous polynomial of degree $s$ and is
ext. one affine. To prove (\ref{(1) Etape 1.1 thm principal}), let us
proceed by induction on the dimension $n$. The case $n=1$ is trivial to check. For each $\xi\in\Lambda^k$, we write%
\[
\xi=\sum_{I\in\mathcal{T}^{n}_{k},1\in I }\xi_{I} e^{I} +\xi_{N},\text{ where }\xi_{N}=\sum_{I\in\mathcal{T}^{n}_{k},1\notin I }\xi_{I} e^{I}.
\]
Note that, $\xi_{N}\in \Lambda^k\left(\{e^1\}^{\perp}\right)$.  Invoking Lemma \ref{Lemme elem 1-affine}, we obtain that
\[
f\left(  \xi\right)  =f\left(  \xi_{N}\right)  +\sum_{I\in\mathcal{T}^{n}_{k},1\in I}\xi_{I}\left[  f\left(  \xi_{N}+e^{I}\right)  -f\left(  \xi_{N}\right)
\right].
\]
Since $f$ is ext. one affine on $\Lambda^k\left(\{e^1\}^{\perp}\right)$, applying the induction hypothesis to $f|_{\Lambda^k\left(\{e^1\}^{\perp}\right)}$ and $f\left(e^I+\cdot\right)|_{\Lambda^k\left(\{e^1\}^{\perp}\right)}$, we deduce that both are polynomials of degree at most $\left(  n-1\right)$. Hence, $f$ is a polynomial of degree at most $n$. This proves the claim by induction. That each of $f_{s}$ is ext. one affine, follows from the fact that each $f_{s}$ has a different degree of homogeneity.\smallskip

\emph{Step 2.} We now show that $f$ is, in fact, a polynomial of degree at most $\left[  n/k\right]$, which is equivalent to proving that each $f_s$ in (\ref{(1) Etape 1.1 thm principal}) is a polynomial of degree at most $\left[  n/k\right]$. Since $f_{s}$ is
a homogeneous polynomial of degree $s$, we can write%
\begin{equation}\label{(1) Etape 1.2 thm principal}
f_{s}\left(  \xi\right)  =\sum_{I^{1},\ldots,I^{s}\in \mathcal{T}^{n}_{k}}d_{I^{1}\cdots I^{s}}\,\xi_{I^{1}}\cdots\xi_{I^{s}},
\end{equation}
where $d_{I^{1}\cdots I^{s}}\in\mathbb{R}.$ It is enough to prove that, for some $I^{1},\ldots,I^{s}\in \mathcal{T}^{n}_{k}$, whenever $d_{I^{1}\cdots I^{s}}\neq 0$, we have,
$$
I^p\cap I^q=\emptyset,\text{ for all }p,q=1,\ldots,s;\,p\neq q.
$$
Let us suppose to the contrary that, for some $p,q$, $p\neq q$, we have $
I^p\cap I^q\neq\emptyset.$ For $t\in\mathbb{R}$, let us define
\[
\xi(t)=t\left(e^{I^p}+ e^{I^q}\right)+\sum_{\stackrel{a=1}{a\neq p,q}}^{s}e^{I^{a}}.
\]
We, therefore,
have according to (\ref{(1) Etape 1.2 thm principal}) that, for all $t\in\mathbb{R}$,
\[
f_{s}\left(  \xi(t)\right)  =t^{2}d_{I%
^{1}\cdots I^{s}},\text{ for all }t\in\mathbb{R}.
\]
On the other hand, using Lemma \ref{Lemme elem 1-affine}, it follows that
\begin{align*}
f_{s}\left(  \xi(t)\right)=f_{s}\left( \sum_{\stackrel{a=1}{a\neq p,q}}^{s}e^{I^{a}}+ t\left(e^{I^p}+ e^{I^q}\right)\right)=
f_{s}\left(  \xi(0)\right)+t\left[f_{s}\left(  \xi(1)\right)-f_{s}\left(  \xi(0)\right) \right],
\end{align*}
which is an affine function of $t$. This proves the claim by contradiction.\smallskip

\emph{Step 3.} Henceforth, to avoid any ambiguity, let us fix an order in which multiindices $I^{1},\cdots,I^{s}$ are considered to appear in Equation (\ref{(1) Etape 1.2 thm principal}) and we choose the order to that%
$$
i_{1}^{1}<\cdots<i_{1}^{s},
$$
where $i_{1}^{j}$ is the first element of $I^j$, for all $j=1,\ldots,s$. With the aforementioned order in force, we re-arrange Equation (\ref{(1) Etape 1.2 thm principal}) to have
\begin{equation}
f\left(  \xi\right)  =\sum_{r=0}^{\left[  n/k\right]  }f_{s}\left(
\xi\right),  \quad\text{where}\quad f_{s}\left(  \xi\right)  =\sum_{I^{1},\ldots,I^{s}}c_{I^{1}\cdots I^{s}}\,\xi_{I^{1}}%
\cdots\xi_{I^{s}}, \label{(1) Etape 1 thm principal}%
\end{equation}
with $c_{I^{1}\cdots I^{s}}\in\mathbb{R}\setminus\{0\}$, and the ordered multiindices $I^{1},\ldots,I^s \in\mathcal{T}^{n}_{k}$ with $i_{1}^{1}<\cdots<i_{1}^{s}$.
%From now on we assume that $f$ and $f_{s}$ are as in
%(\ref{(1) Etape 1 thm principal}).
Note that, the theorem is proved once we show that%
\begin{equation*}
f_{s}\left(  \xi\right)  =\left\langle c_{s};\xi^{s}\right\rangle,
\end{equation*}
which is equivalent to proving that,
\begin{equation}\label{(2) Etape 3 thm principal}
c_{J^{1}\cdots J^{s}}=\operatorname*{sgn}%
\left(  \sigma\right)  c_{I^{1}\cdots I^{s}},
\end{equation}
for all $I^{1},\ldots,I^s,J^{1},\ldots,J^s \in\mathcal{T}^{n}_{k}$ satisfying $J^1\cup\cdots\cup J^s = I^1\cup\cdots\cup I^s $, $\sigma \left( J^1J^2\cdots J^s \right) = \left( I^1 I^2\cdots I^s \right)$, where $\sigma \in \mathcal{S}^{sk}$ is the permutation of indices
that respects the aforementioned order. \smallskip

\emph{Step 3.1.} Before concluding the proof, we observe that, for all $s=2,\ldots,n$,
\[
f_{s}\left(  \sum_{i=1}^{s-1}t_{i}\,\alpha_{i}\right)  =0,
\]
where $t_{i}\in\mathbb{R}$ and $\alpha_{i}$ is an element of
the standard basis of $\Lambda^{k}.$ This is a direct consequence of the fact
that $f_{s}$ is homogeneous of degree $s$ and that%
\[
\xi=\sum_{i=1}^{s-1}t_{i}\,\alpha_{i}%
\]
has at most $\left(  s-1\right)  $ coefficients that are non-zero.\smallskip

\emph{Step 3.2.} We finally establish Equation (\ref{(2) Etape 3 thm principal}). Let $I^{1},\ldots,I^s,J^{1},\ldots,J^s \in\mathcal{T}^{n}_{k}$ satisfy $$J^1\cup\cdots\cup J^s = I^1\cup\cdots\cup I^s,\text{ and }\sigma \left( J^1J^2\cdots J^s \right) = \left( I^1 I^2\cdots I^s \right), $$ where
$\sigma \in \mathcal{S}^{sk}$ is the permutation of indices that respects the order. We claim that
\begin{equation}\label{19.6.2013.1}
c_{ J^{1}\cdots J^{s}  }=\operatorname*{sgn}%
\left(  \sigma\right)  c_{I^{1}\cdots I^{s}}.
\end{equation}
Since any permutation that respects the ordering scheme is a product (unique up to parity) of transpositions each of which respects the ordering, it is enough to prove the result for the case where $\sigma$ is a
transposition that respects the ordering. Hence, Equation (\ref{19.6.2013.1}) reduces to proving that
\begin{equation}
c_{I^{1}\cdots I^{s}}=-c_{J^{1} \cdots J^{s} }\,. \label{finalclaim}%
\end{equation}
Let us write
\[
I^{1}=\left(  i_{1}^{1}\,,\ldots,i_{k}^{1}\right)  ,\ldots,I^{s}=\left(  i_{1}^{s}\,,\ldots,i_{k}^{s}\right),
\]
and
\[
J^{1}=\left(  j_{1}^{1}\,,\ldots,j_{k}^{1}\right)  ,\ldots,J^{s}=\left(  j_{1}^{s}\,,\ldots,j_{k}^{s}\right).
\]
Then,
$
i_{1}^{1}<\cdots<i_{1}^{s}\quad\text{and}\quad j_{1}^{1}<\cdots<j_{1}^{s}.
$ Since $\sigma$ respects the ordering scheme, $\sigma$ flips two
indices $i_{r_{1}}^{q_{1}}$ and $i_{r_{2}}^{q_{2}}\,,$ with $q_{1}\neq
q_{2}$ and leaves other indices fixed. Note that, from (\ref{(1) Etape 1 thm principal}), we have%
\begin{equation}
c_{I^{1}\cdots I^{s}}=f_{s}\left(  \sum_{m=1}^{s}e^{I^{m}}\right)
=f_{s}\left(  \sum_{m=1}^{s}e^{i_{1}^{m}}\wedge\cdots\wedge e^{i_{k}^{m}%
}\right),  \label{exprcoeff1}%
\end{equation}
and%
\begin{equation}
c_{J^{1}\cdots J^{s}  }=f_{s}\left(  \sum
_{m=1}^{s}e^{J^{m}  }\right)  =f_{s}\left(
\sum_{m=1}^{s}e^{j_{1}^{m}}\wedge\cdots\wedge e^{j_{k}^{m}}\right)  . \label{exprcoeff2}%
\end{equation}
Since $f_{s}$ is ext. one affine, setting
\[
a=e^{i_{r_{1}}^{q_{1}}},\quad b=e^{i_{r_{2}}^{q_{2}}},\quad\xi=\sum
_{\substack{m=1\\m\neq q_{1}\,,q_{2}}}^{s}e^{i_{1}^{m}}\wedge\cdots\wedge
e^{i_{k}^{m}}=\sum
_{\substack{m=1\\m\neq q_{1}\,,q_{2}}}^{s}
e^{I^{m}},
\]%
\[
\alpha=\pm\,e^{i_{1}^{q_{1}}}\wedge\cdots\wedge\widehat{e^{i_{r_{1}}^{q_{1}}}%
}\wedge\cdots\wedge e^{i_{k}^{q_{1}}}\quad\text{and}\quad\beta=\pm
\,e^{i_{1}^{q_{2}}}\wedge\cdots\wedge\widehat{e^{i_{r_{2}}^{q_{2}}}}%
\wedge\cdots\wedge e^{i_{k}^{q_{2}}},
\]
with signs being chosen appropriately so that
\[
\alpha\wedge a=e^{I^{q_{1}}}=e^{i_{1}^{q_{1}}}\wedge\cdots\wedge
e^{i_{k}^{q_{1}}}\quad\text{and}\quad\beta\wedge b=e^{I^{q_{2}}}%
=e^{i_{1}^{q_{2}}}\wedge\cdots\wedge e^{i_{k}^{q_{2}}},
\]
we can apply Corollary \ref{Corrolaire sur sommes indep} to $f_{s}$ to obtain
\begin{align*}
&  \left[  f_{s}\left(  \xi+\alpha\wedge a+\beta\wedge b\right)  -f_{s}\left(
\xi\right)  \right]  +\left[  f_{s}\left(  \xi+\beta\wedge a+\alpha\wedge
b\right)  -f_{s}\left(  \xi\right)  \right]  \smallskip\\
&  =\left[  f_{s}\left(  \xi+\alpha\wedge a\right)  -f_{s}\left(  \xi\right)
\right]  +\left[  f_{s}\left(  \xi+\beta\wedge b\right)  -f_{s}\left(
\xi\right)  \right]  \smallskip\\
&  +\left[  f_{s}\left(  \xi+\beta\wedge a\right)  -f_{s}\left(  \xi\right)
\right]  +\left[  f_{s}\left(  \xi+\alpha\wedge b\right)  -f_{s}\left(
\xi\right)  \right].
\end{align*}
Using Step 3.1, all except $f_{s}\left(  \xi+\alpha\wedge a+\beta\wedge b\right)$ and
$f_{s}\left(  \xi+\beta\wedge a+\alpha\wedge b\right)$ are $0$. Hence, we deduce
\[
f_{s}\left(  \xi+\alpha\wedge a+\beta\wedge b\right)  =-f_{s}\left(  \xi
+\beta\wedge a+\alpha\wedge b\right),
\]
which, together with (\ref{exprcoeff1}) and (\ref{exprcoeff2}) proves
(\ref{finalclaim}). This concludes the proof of Step 3.2 and thus of the theorem.
\end{proof}

\section{Some examples}

\subsection{The quadratic case}

\subsubsection{Some preliminary results}

Before stating the main theorem on quadratic forms, we need a lemma whose
proof is straightforward.

\begin{lemma}
\label{Lemme formes quadratiques}Let $1\leq k\leq n,$ $M:\Lambda^{k}  \rightarrow\Lambda^{k} $
be a symmetric linear operator and $f:\Lambda^{k}\rightarrow\mathbb{R}$ be such that, for every $\xi\in\Lambda
^{k}  ,$%
\[
f\left(  \xi\right)  =\left\langle M\xi;\xi\right\rangle .
\]
The following statements then hold true.\smallskip

\textbf{(i)} $f$ is ext. polyconvex if and only if there exists $\beta
\in\Lambda^{2k}  $ so that,
\[
f\left(  \xi\right)  \geq\left\langle \beta;\xi\wedge\xi\right\rangle,\text{ for every }\xi
\in\Lambda^{k}.
\]

\textbf{(ii)} $f$ is ext. quasiconvex if and only if%
\[
\int_{\Omega}f\left(  d\omega\right)  \geq0,
\]
for every bounded open set $\Omega\subset\mathbb{R}^n$ and $\omega\in W_{0}^{1,\infty
}\left(  \Omega;\Lambda^{k-1}\right)  .$\smallskip

\textbf{(iii)} $f$ is ext. one convex if and only if, for every $a\in\Lambda^{k-1}$ and $b\in
\Lambda^{1}$,
\[
f\left(  a\wedge b\right)  \geq0,
\]

\end{lemma}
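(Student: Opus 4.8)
All three equivalences reduce to expanding the quadratic form $\xi\mapsto\langle M\xi;\xi\rangle$, and I would dispose of (ii) and (iii) first since they are immediate. For (iii), fix $\xi\in\Lambda^{k}$, $\alpha\in\Lambda^{k-1}$, $\beta\in\Lambda^{1}$ and expand, using the symmetry of $M$, $g(t)=f(\xi+t\,\alpha\wedge\beta)=f(\xi)+2t\langle M\xi;\alpha\wedge\beta\rangle+t^{2}f(\alpha\wedge\beta)$, so that $g''(t)\equiv2f(\alpha\wedge\beta)$; hence $g$ is convex for every choice of $\xi$ if and only if $f(\alpha\wedge\beta)\geq0$, which is precisely the stated condition after renaming $a=\alpha$, $b=\beta$. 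For (ii), the forward implication comes from setting $\xi=0$ in the definition of ext. quasiconvexity and using $f(0)=0$. For the converse, given $\omega\in W_{0}^{1,\infty}(\Omega;\Lambda^{k-1})$, expand $f(\xi+d\omega)=f(\xi)+2\langle M\xi;d\omega\rangle+f(d\omega)$ and integrate over $\Omega$; since each component of $d\omega$ is a linear combination of partial derivatives of components of $\omega$ and $\omega$ vanishes on $\partial\Omega$, integration by parts (Stokes' theorem) gives $\int_{\Omega}d\omega=0$, whence $\int_{\Omega}f(\xi+d\omega)=f(\xi)\operatorname*{meas}\Omega+\int_{\Omega}f(d\omega)\geq f(\xi)\operatorname*{meas}\Omega$.

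For (i) the easy direction is ($\Leftarrow$): given $\beta\in\Lambda^{2k}$ with $f(\xi)\geq\langle\beta;\xi\wedge\xi\rangle$ for all $\xi$, set $F(\theta_{1},\ldots,\theta_{[n/k]})=\bigl(f(\theta_{1})-\langle\beta;\theta_{1}\wedge\theta_{1}\rangle\bigr)+\langle\beta;\theta_{2}\rangle$. The first bracket is a nonnegative quadratic form in $\theta_{1}$, hence convex, and the second summand is linear, so $F$ is convex; moreover $F(\xi,\xi^{2},\ldots,\xi^{[n/k]})=f(\xi)$, so $f$ is ext. polyconvex. (When $2k>n$, i.e. $[n/k]=1$, the condition reads $f\geq0$ and the claim is realized by $\beta=0$.)

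The only step needing an idea beyond a plain expansion is ($\Rightarrow$) of (i). I would invoke Proposition \ref{Proposition equiv polyconvexite}(ii) at the point $\xi=0$: there exist $c_{s}=c_{s}(0)\in\Lambda^{ks}$, $1\leq s\leq[n/k]$, with $f(\eta)\geq\sum_{s=1}^{[n/k]}\langle c_{s};\eta^{s}\rangle$ for every $\eta\in\Lambda^{k}$ (here $f(0)=0$ and $0^{s}=0$). Replacing $\eta$ by $t\eta$ and using that $f$ is $2$-homogeneous while $\eta\mapsto\eta^{s}$ is $s$-homogeneous turns this into the scalar polynomial inequality $t^{2}f(\eta)\geq\sum_{s=1}^{[n/k]}t^{s}\langle c_{s};\eta^{s}\rangle$, valid for all $t\in\mathbb{R}$ and all fixed $\eta$. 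Since this polynomial in $t$ is nonnegative and vanishes at $t=0$, its coefficient of $t$ must vanish, giving $\langle c_{1};\eta\rangle=0$; dividing by $t^{2}$ and letting $t\to0$ then yields $f(\eta)\geq\langle c_{2};\eta^{2}\rangle=\langle c_{2};\eta\wedge\eta\rangle$, so $\beta=c_{2}\in\Lambda^{2k}$ works. The whole point of this last argument—and the only place anything subtler than expanding $\langle M\xi;\xi\rangle$ is needed—is to rule out essential dependence of the polyconvex representative on the linear slot and on the higher slots $\theta_{s}$, $s\geq3$; the scaling argument isolates exactly the order-two term, which is why $\xi\wedge\xi$ alone suffices.
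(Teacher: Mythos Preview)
Your proof is correct. The paper itself omits the proof of this lemma entirely, calling it ``straightforward,'' so there is nothing to compare against; your argument supplies exactly the kind of routine expansion the authors had in mind for (ii) and (iii), and for (i)\,($\Rightarrow$) your appeal to Proposition~\ref{Proposition equiv polyconvexite}(ii) at $\xi=0$ followed by the homogeneity/scaling trick to kill the $s=1$ and $s\geq 3$ contributions is clean and standard.
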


\subsubsection{Some examples}

We start with the following example that will be used in Theorem
\ref{Thm formes quadratiques} below.
\begin{proposition}
\label{Proposition contre-exemple quasi poly (cas quadratique)}Let $2\leq
k\leq n-2.$ Let $\alpha\in\Lambda^{k}  $ be not
$1$-divisible, then there exists $c>0$ such that
\[
f\left(  \xi\right)  =\left\vert \xi\right\vert ^{2}-c\left(  \left\langle
\alpha;\xi\right\rangle \right)  ^{2},%
\]
is ext. quasiconvex but not convex. If, in addition $\alpha\wedge\alpha=0,$
then the above $f,$ for an appropriate $c,$ is ext. quasiconvex but not ext. polyconvex.
\end{proposition}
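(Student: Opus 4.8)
The plan is to work entirely through Lemma \ref{Lemme formes quadratiques}, writing $f(\xi)=|\xi|^{2}-c\langle\alpha;\xi\rangle^{2}=\langle M\xi;\xi\rangle$ with $M\xi=\xi-c\langle\alpha;\xi\rangle\,\alpha$. The quantity that controls everything is
\[
\gamma:=\max\bigl\{\langle\alpha;v\rangle^{2}:v\in\Lambda^{k}\text{ is }1\text{-divisible and }|v|=1\bigr\}.
\]
The first step is to prove the strict inequality $\gamma<|\alpha|^{2}$. The set of $1$-divisible $k$-forms is closed (a form $w$ is $1$-divisible iff $b\wedge w=0$ for some $b\in\Lambda^{1}\setminus\{0\}$, and normalising $b$ lets one pass to the limit), so the maximum above is attained, say at $v_{0}$; if $\gamma$ were equal to $|\alpha|^{2}$, the equality case of Cauchy--Schwarz would force $v_{0}$ to be a nonzero multiple of $\alpha$, hence $\alpha$ to be $1$-divisible, contrary to hypothesis. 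I then fix once and for all a constant $c$ with
\[
\frac{1}{|\alpha|^{2}}<c\qquad\text{and}\qquad c\,\gamma\le 1,
\]
which is possible precisely because $\gamma<|\alpha|^{2}$. The same $c$ will serve for both assertions of the proposition.

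With this choice, ext. one convexity is immediate from Lemma \ref{Lemme formes quadratiques}(iii): for $a\in\Lambda^{k-1}$ and $b\in\Lambda^{1}$,
\[
f(a\wedge b)=|a\wedge b|^{2}-c\langle\alpha;a\wedge b\rangle^{2}\ge(1-c\gamma)|a\wedge b|^{2}\ge 0.
\]
To upgrade this to ext. quasiconvexity in the quadratic case I would use the reformulation of Lemma \ref{Lemme formes quadratiques}(ii), the periodic version of ext. quasiconvexity from Remark \ref{Remarque sur les def de convexite}(v), and a standard Plancherel argument: expanding a periodic $\omega$ in Fourier series, the $j$-th Fourier coefficient of $d\omega$ has the form $\bigl(\sum_{l}j_{l}e^{l}\bigr)\wedge a_{j}$ for some $a_{j}\in\Lambda^{k-1}\otimes\mathbb{C}$, hence is $1$-divisible; since $\sum_{l}j_{l}e^{l}$ is real, its real and imaginary parts are real $1$-divisible forms, so the bound $\langle\alpha;v\rangle^{2}\le\gamma|v|^{2}$ applies to each of them, and Plancherel gives
\[
\int_{D}f(d\omega)=\sum_{j}\Bigl(|\widehat{d\omega}(j)|^{2}-c\,|\langle\alpha;\widehat{d\omega}(j)\rangle|^{2}\Bigr)\ge 0.
\]
(Equivalently, one may invoke the quadratic case of the fact that ext. one convexity and ext. quasiconvexity coincide, established just as for the gradient.) This Plancherel step is the only genuinely analytic point; everything else is algebra.

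It remains to settle convexity and, under the extra hypothesis, polyconvexity. Non-convexity is immediate, since $M$ has the negative eigenvalue $1-c|\alpha|^{2}$ in the direction $\alpha$. Finally, assume $\alpha\wedge\alpha=0$ and suppose, for contradiction, that $f$ is ext. polyconvex; then by Lemma \ref{Lemme formes quadratiques}(i) there is $\beta\in\Lambda^{2k}$ with $f(\xi)\ge\langle\beta;\xi\wedge\xi\rangle$ for all $\xi\in\Lambda^{k}$. Taking $\xi=\alpha$ and using $\alpha\wedge\alpha=0$ gives
\[
|\alpha|^{2}-c|\alpha|^{4}=f(\alpha)\ge\langle\beta;\alpha\wedge\alpha\rangle=0,
\]
that is $c\le 1/|\alpha|^{2}$, contradicting the choice of $c$. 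Hence $f$ is ext. quasiconvex but not ext. polyconvex. The only delicate point in the whole argument is the strict inequality $\gamma<|\alpha|^{2}$ — which is exactly where non-$1$-divisibility of $\alpha$ enters — together with the resulting freedom in choosing $c$; the Plancherel computation, though routine, is what carries the ext. quasiconvexity claim, while the failure of polyconvexity is a one-line evaluation at $\xi=\alpha$.
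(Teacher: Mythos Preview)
Your proof is correct and follows essentially the same route as the paper: you introduce the same key quantity $\gamma=\sup\{\langle\alpha;a\wedge b\rangle^{2}:|a\wedge b|=1\}$, establish the strict inequality $\gamma<|\alpha|^{2}$ via closedness of the $1$-divisible cone (the paper argues this by extracting a limiting direction $\bar b$ and invoking Cartan's lemma, which is exactly your characterisation ``$w$ is $1$-divisible iff $b\wedge w=0$ for some $b\neq 0$''), and then verify ext.\ one convexity, upgrade to ext.\ quasiconvexity in the quadratic case, and rule out convexity and polyconvexity exactly as the paper does. The only cosmetic differences are that the paper takes $c=1/\gamma$ while you allow any $c\in(1/|\alpha|^{2},\,1/\gamma]$, and that you sketch the Plancherel step directly whereas the paper defers it to Theorem~\ref{Thm formes quadratiques}(i).
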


\begin{remark}
\textbf{(i)} It is easy to see that $\alpha$ is not $1$-divisible if and only
if%
\[
\operatorname*{rank}\left[  \ast\alpha\right]  =n.
\]
This results from Remark 2.44 (iv) (with the help of Proposition 2.33 (iii))
in \cite{Csato-Dac-Kneuss 2012}. Such an $\alpha$ always exists if either of
the following holds (see Propositions 2.37 (ii) and 2.43 in
\cite{Csato-Dac-Kneuss 2012})

- $k=n-2\geq 2$ is even,

- $2\leq k\leq n-3$.

\noindent For example,
\[
\alpha=e^{1}\wedge e^{2}\wedge e^{3}+e^{4}\wedge e^{5}\wedge e^{6}\in
\Lambda^{3}\left(  \mathbb{R}^{6}\right),
\]
is not $1$-divisible.\smallskip

\textbf{(ii)} Note that, when $k=2$ every form $\alpha$ such that $\alpha
\wedge\alpha=0$ is necessarily $1$-divisible. While, as soon as $k$ is even
and $4\leq k\leq n-2,$ there exists $\alpha$ that is not $1$-divisible, but
$\alpha\wedge\alpha=0.$ For example, when $k=4$,
\[
\alpha=e^{1}\wedge e^{2}\wedge e^{3}\wedge e^{4}+e^{1}\wedge e^{2}\wedge
e^{5}\wedge e^{6}+e^{3}\wedge e^{4}\wedge e^{5}\wedge e^{6}\in\Lambda
^{4}\left(  \mathbb{R}^{6}\right)  .
\]
\end{remark}

\begin{proof}
Since the function is quadratic, the notions of ext. one convexity and ext.
quasiconvexity are equivalent (see Theorem \ref{Thm formes quadratiques}). We therefore only need to discuss the ext. one convexity. We divide
the proof into two steps.\smallskip

\emph{Step 1.} We first show that if%
\[
\frac{1}{c}=\sup\left\{  \left(
\left\langle \alpha;a\wedge b\right\rangle \right)  ^{2}:a\in\Lambda^{k-1},\,b\in\Lambda^{1},\,\left\vert a\wedge
b\right\vert =1\right\},
\]
then%
\[
\frac{1}{c}<\left\vert \alpha\right\vert ^{2}.
\]
We prove this statement as follows. Let $a_{s}\in\Lambda^{k-1},\,b_{s}%
\in\Lambda^{1}$ be a maximizing sequence. Up to a subsequence that we do not
relabel, we find that there exists $\lambda\in\Lambda^{k}$ so that%
\[
a_{s}\wedge b_{s}\rightarrow\lambda\quad\text{with}\quad\left\vert
\lambda\right\vert =1.
\]
Similarly, up to a subsequence that we do not relabel, we have that there
exists $\overline{b}\in\Lambda^{1}$ so that%
\[
\frac{b_{s}}{\left\vert b_{s}\right\vert }\rightarrow\overline{b}.
\]
Since%
\[
a_{s}\wedge b_{s}\wedge\frac{b_{s}}{\left\vert b_{s}\right\vert }=0,
\]
we deduce that%
\[
\lambda\wedge\overline{b}=0.
\]
Appealing to Cartan lemma (see Theorem 2.42 in \cite{Csato-Dac-Kneuss 2012}),
we find that there exists $\overline{a}\in\Lambda^{k-1}$ such that%
\[
\lambda=\overline{a}\wedge\overline{b}\quad\text{with}\quad\left\vert
\overline{a}\wedge\overline{b}\right\vert =1.
\]
We therefore have found that%
\[
\frac{1}{c}=\left(  \left\langle \alpha;\overline{a}\wedge\overline
{b}\right\rangle \right)  ^{2}.
\]
Note that $\frac{1}{c}<\left\vert \alpha\right\vert ^{2}$, otherwise
$\overline{a}\wedge\overline{b}$ would be parallel to $\alpha$ and thus
$\alpha$ would be $1$-divisible which contradicts the hypothesis.\smallskip

\emph{Step 2.} So let%
\[
f\left(  \xi\right)  =\left\vert \xi\right\vert ^{2}-c\left(  \left\langle
\alpha;\xi\right\rangle \right)  ^{2}.
\]

(i) Observe that $f$ is not convex since $c\left\vert\alpha\right\vert
^{2}>1$ (by Step 1). Indeed,
\[
f\left(\frac{1}{2}\alpha+\frac{1}{2}(-\alpha)\right)=f(0)=0>\vert\alpha\vert^2\left(1-c\left\vert \alpha\right\vert
^{2}\right)=f(\alpha)=\frac{1}{2}f(\alpha)+\frac{1}{2}f(-\alpha).
\]

(ii) However, $f$ is ext. one convex (and thus, invoking Theorem
\ref{Thm formes quadratiques}, $f$ is ext. quasiconvex). Indeed, let%
\[
g\left(  t\right)  =f\left(  \xi+t\,a\wedge b\right)  =\left\vert
\xi+t\,a\wedge b\right\vert ^{2}-c\left(  \left\langle \alpha;\xi+t\,a\wedge
b\right\rangle \right)  ^{2}.
\]
Note that%
\[
g^{\prime\prime}\left(  t\right)  =2\left[  \left\vert a\wedge b\right\vert
^{2}-c\left(  \left\langle \alpha;a\wedge b\right\rangle \right)  ^{2}\right],
\]
which is non-negative by Step 1. Thus $g$ is convex.\smallskip

(iii) Let $\alpha\wedge\alpha=0$ and assume, for the sake of contradiction,
that $f$ is ext. polyconvex. Then there should exist (cf. Lemma
\ref{Lemme formes quadratiques}) $\beta\in\Lambda^{2k}$ so that, for every
$\xi\in\Lambda^{k},$%
\[
f\left(  \xi\right)  \geq\left\langle \beta;\xi\wedge\xi\right\rangle .
\]
This is clearly impossible, in view of the fact that $c\left\vert
\alpha\right\vert ^{2}>1,$ since choosing $\xi=\alpha,$ we get%
\[
f\left(  \alpha\right)  =\left\vert \alpha\right\vert ^{2}\left(
1-c\left\vert \alpha\right\vert ^{2}\right)  <0=\left\langle \beta
;\alpha\wedge\alpha\right\rangle.
\]

The proof is therefore complete.\smallskip
\end{proof}

We conclude with another example.

\begin{proposition}
Let $1\leq k\leq n,$ $T:\mathbb{R}^{n}\rightarrow\mathbb{R}^{n}$ be a
symmetric linear operator and $T^{\ast}:\Lambda^{k} \rightarrow\Lambda^{k}  $ be the
pullback of $T.$ Let $f:\Lambda^{k}
\rightarrow\mathbb{R}$ be defined as
\[
f\left(  \xi\right)  =\left\langle T^{\ast}\left(  \xi\right)  ;\xi
\right\rangle,\text{ for every }\xi\in\Lambda^{k}.
\]
Then $f$ is ext. one convex if and only if $f$ is convex.
\end{proposition}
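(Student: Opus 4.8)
The nontrivial direction is to show that ext.\ one convexity of $f(\xi) = \langle T^{\ast}\xi;\xi\rangle$ forces convexity, i.e.\ that $T^{\ast}$ is a nonnegative operator on all of $\Lambda^k$, given only that $\langle T^{\ast}(a\wedge b); a\wedge b\rangle \geq 0$ for every $a\in\Lambda^{k-1}$, $b\in\Lambda^1$ (this being the ext.\ one convexity criterion from Lemma~\ref{Lemme formes quadratiques}(iii)). The key structural fact is that $T^{\ast}$ is the $k$-th exterior power of a symmetric operator $T$ on $\mathbb{R}^n$, so it is diagonalized in the induced basis: if $\{v_1,\dots,v_n\}$ is an orthonormal eigenbasis of $T$ with $Tv_i = \mu_i v_i$, then $\{v_{i_1}\wedge\cdots\wedge v_{i_k} : i_1<\cdots<i_k\}$ is an orthonormal eigenbasis of $T^{\ast}$ with eigenvalue $\mu_{i_1}\cdots\mu_{i_k}$. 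Thus $f(\xi) = \sum_{I} \mu_I\, \xi_I^2$ in this basis, where $\mu_I := \prod_{i\in I}\mu_i$, and convexity of $f$ is equivalent to $\mu_I \geq 0$ for every increasing multi-index $I$.

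First I would set up the eigenbasis and reduce to showing $\mu_I \geq 0$ for all $I$. Next, for a fixed $I = (i_1 < \cdots < i_k)$, I would test ext.\ one convexity on the decomposable element $\xi = v_{i_1}\wedge\cdots\wedge v_{i_k} = (v_{i_1}\wedge\cdots\wedge v_{i_{k-1}})\wedge v_{i_k}$, which is of the form $a\wedge b$ with $a = v_{i_1}\wedge\cdots\wedge v_{i_{k-1}}\in\Lambda^{k-1}$ and $b = v_{i_k}\in\Lambda^1$; this is a legitimate test vector, and it gives $f(v_{i_1}\wedge\cdots\wedge v_{i_k}) = \mu_I \geq 0$ directly. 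So in fact every standard basis element of $\Lambda^k$ (in the eigenbasis) is $1$-divisible, and testing ext.\ one convexity on it yields exactly $\mu_I\geq 0$. Since $f$ in diagonal form has all nonnegative coefficients, $f$ is convex, and the converse (convex $\Rightarrow$ ext.\ one convex) is immediate from Theorem~\ref{Thm general sur poly quasi ...}(i).

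The main obstacle—really the only point requiring care—is justifying that $T^{\ast}$, the pullback of a symmetric $T$, is itself symmetric and diagonalized by the wedge products of $T$'s eigenvectors; one must check the pullback convention matches $(T^{\ast}\omega)(X_1,\dots,X_k) = \omega(TX_1,\dots,TX_k)$ so that on decomposables $T^{\ast}(w_1\wedge\cdots\wedge w_k)$ relates to $Tw_1\wedge\cdots\wedge Tw_k$ up to the standard identification via the inner product, whence symmetry of $T^{\ast}$ with respect to the induced inner product on $\Lambda^k$ follows from symmetry of $T$. Once this algebraic identification is in place the argument is short: the crucial observation is that, unlike a general quadratic form on $\Lambda^k$, this particular one is already diagonal in a basis consisting entirely of $1$-divisible (decomposable) elements, so ext.\ one convexity—which only probes decomposable directions—sees the full spectrum of $T^{\ast}$ and hence is equivalent to convexity.
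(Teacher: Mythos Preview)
Your proposal is correct and follows essentially the same route as the paper: diagonalize $T$ by an orthonormal eigenbasis $\{\varepsilon^i\}$ with eigenvalues $\lambda_i$, observe that $T^{\ast}(\varepsilon^I)=\left(\prod_{i\in I}\lambda_i\right)\varepsilon^I$, test ext.\ one convexity on the decomposable vectors $\varepsilon^I$ (via Lemma~\ref{Lemme formes quadratiques}(iii)) to obtain $\prod_{i\in I}\lambda_i\geq 0$, and conclude $f(\xi)=\sum_I\left(\prod_{i\in I}\lambda_i\right)\xi_I^2\geq 0$. Your remark that the essential point is the diagonalizability of $T^{\ast}$ in a basis consisting entirely of $1$-divisible elements is exactly the mechanism the paper exploits.
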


\begin{proof}
Since convexity implies ext. one convexity, we only have to prove the reverse
implication.\smallskip

\emph{Step 1.} Since $T$ is symmetric, we can find eigenvalues $\left\{
\lambda_{1},\ldots,\lambda_{n}\right\}$ (not necessarily distinct) of $T$ with a
corresponding set of orthonormal eigenvectors $\left\{  \varepsilon^{1}%
,\ldots,\varepsilon^{n}\right\}  .$ Let $\left\{  e^{1},\ldots,e^{n}\right\}
$ be the standard basis of $\mathbb{R}^{n}$ and let $\Lambda=\operatorname*{diag}%
\left(  \lambda_{1},\ldots,\lambda_{n}\right)  $ and $Q$ be the orthogonal
matrix so that%
\[
Q^{\ast}\left(  \varepsilon^{i}\right)  =e^{i},\quad\text{for }i=1,\ldots,n.
\]
In terms of matrices what we have written just means that%
\[
T=Q\Lambda Q^{t}.
\]
Observe that, for every $i=1,\ldots,n,$%
\begin{align*}
T^{\ast}\left(  \varepsilon^{i}\right)   &  =\left(  Q\Lambda Q^{t}\right)
^{\ast}\left(  \varepsilon^{i}\right)  =\left(  Q^{t}\right)  ^{\ast}\left(
\Lambda^{\ast}\left(  Q^{\ast}\left(  \varepsilon^{i}\right)  \right)
\right)  =\left(  Q^{t}\right)  ^{\ast}\left(  \Lambda^{\ast}\left(
e^{i}\right)  \right)=\lambda_{i}%
\varepsilon^{i}.
\end{align*}
This implies, for every $1\leq k\leq n$ and $I\in\mathcal{T}_{k}^{n}\,,$%
\[
T^{\ast}\left(  \varepsilon^{I}\right)  =T^{\ast}\left(  \varepsilon^{i_{1}%
}\wedge\cdots\wedge\varepsilon^{i_{k}}\right)  =T^{\ast}\left(  \varepsilon
^{i_{1}}\right)  \wedge\cdots\wedge T^{\ast}\left(  \varepsilon^{i_{k}%
}\right)  =\left(  \prod_{j=1}^{k}\lambda_{i_{j}}\right)  \varepsilon^{I}.
\]

\emph{Step 2. }Since $f$ is ext one convex and in view of Lemma
\ref{Lemme formes quadratiques} (iii), we have%
\[
f\left(  \varepsilon^{I}\right)  =\left\langle \left(  T^{\ast}\left(
\varepsilon^{I}\right)  \right)  ;\varepsilon^{I}\right\rangle \geq0
\]
and thus%
\begin{equation}
\prod_{j=1}^{k}\lambda_{i_{j}}=\prod_{i\in I}\lambda_{i}\geq0.
\label{(1) dans Prop rappel}%
\end{equation}
Writing $\xi$ in the basis $\left\{  \varepsilon^{1},\ldots,\varepsilon
^{n}\right\}  ,$ we get%
\begin{align*}
f\left(  \xi\right)   &  =\left\langle T^{\ast}\left(  \xi\right)
;\xi\right\rangle =\left\langle T^{\ast}\left(  \sum_{I\in\mathcal{T}_{k}^{n}%
}\xi_{I}\varepsilon^{I}\right)  ;\sum_{I\in\mathcal{T}_{k}^{n}}\xi
_{I}\varepsilon^{I}\right\rangle=\sum_{I\in\mathcal{T}_{k}^{n}}\left(  \prod_{i\in I}\lambda_{i}\right)
\left(  \xi_{I}\right)  ^{2},
\end{align*}
which according to (\ref{(1) dans Prop rappel}) is non negative. This shows
that $f$ is convex as wished.
\end{proof}

\subsubsection{The main result}

We now turn to the main theorem.

\begin{theorem}
\label{Thm formes quadratiques}Let $1\leq k\leq n,$ $M:\Lambda^{k}  \rightarrow\Lambda^{k} $
be a symmetric linear operator and $f:\Lambda^{k}\rightarrow\mathbb{R}$ be such that, for every $\xi\in\Lambda
^{k}$,%
\[
f\left(  \xi\right)  =\left\langle M\xi;\xi\right\rangle .
\]

\textbf{(i)} The following equivalence holds in all cases%
\[
f\text{ ext. quasiconvex }\Leftrightarrow\text{ }f\text{ ext. one convex.}%
\]

\textbf{(ii)} Let $k=2.$ If $n=2$ or $n=3,$ then%
\[
f\text{ convex }\Leftrightarrow\text{ }f\text{ ext. polyconvex }%
\Leftrightarrow\text{ }f\text{ ext. quasiconvex }\Leftrightarrow\text{
}f\text{ ext. one convex.}%
\]
If $n=4,$ then%
\[
f\text{ convex }%
%TCIMACRO{\QATOP{\Rightarrow}{\nLeftarrow}}%
%BeginExpansion
\genfrac{}{}{0pt}{}{\Rightarrow}{\nLeftarrow}%
%EndExpansion
\text{ }f\text{ ext. polyconvex }\Leftrightarrow\text{ }f\text{ ext.
quasiconvex }\Leftrightarrow\text{ }f\text{ ext. one convex},
\]
while if $n\geq6,$ then%
\[
f\text{ ext. polyconvex }%
%TCIMACRO{\QATOP{\Rightarrow}{\nLeftarrow}}%
%BeginExpansion
\genfrac{}{}{0pt}{}{\Rightarrow}{\nLeftarrow}%
%EndExpansion
\text{ }f\text{ ext. quasiconvex }\Leftrightarrow\text{ }f\text{ ext. one
convex.}%
\]

\textbf{(iii)} If $k$ is odd or if $2k>n,$ then%
\[
f\text{ convex }\Leftrightarrow\text{ }f\text{ ext. polyconvex.}%
\]

\textbf{(iv)} If $k$ is even and $2k\leq n,$ then%
\[
f\text{ convex }%
%TCIMACRO{\QATOP{\Rightarrow}{\nLeftarrow}}%
%BeginExpansion
\genfrac{}{}{0pt}{}{\Rightarrow}{\nLeftarrow}%
%EndExpansion
\text{ }f\text{ ext. polyconvex.}%
\]

\textbf{(v)} If either $3\leq k\leq n-3$ or $k=n-2\geq4$ is even, then%
\[
f\text{ ext. polyconvex }%
%TCIMACRO{\QATOP{\Rightarrow}{\nLeftarrow}}%
%BeginExpansion
\genfrac{}{}{0pt}{}{\Rightarrow}{\nLeftarrow}%
%EndExpansion
\text{ }f\text{ ext. quasiconvex }\Leftrightarrow\text{ }f\text{ ext. one
convex.}%
\]

\end{theorem}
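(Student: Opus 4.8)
The plan is to prove the five assertions in order, with part (i) as the analytic engine and the others reduced to it together with some algebra. Parts (iii) and (iv) are purely algebraic. If $k$ is odd then $\xi\wedge\xi=0$ for all $\xi$, and if $2k>n$ then $\Lambda^{2k}=\{0\}$; in either case Lemma~\ref{Lemme formes quadratiques}~(i) reduces ext. polyconvexity of $f$ to $f\geq0$ on $\Lambda^{k}$, i.e. to $M$ being positive semidefinite, i.e. to convexity, which is (iii). For (iv), with $k$ even and $2k\leq n$, I would take $c=e^{1}\wedge\cdots\wedge e^{2k}\in\Lambda^{2k}$ and $f(\xi)=\langle c;\xi\wedge\xi\rangle$: this is a quadratic form; it is ext. polyconvex because $F(\theta_{1},\ldots,\theta_{[n/k]})=\langle c;\theta_{2}\rangle$ is affine (here $[n/k]\geq2$ is used) and $f(\xi)=F(\xi,\xi^{2},\ldots,\xi^{[n/k]})$; but choosing $\xi=e^{1}\wedge\cdots\wedge e^{k}-e^{k+1}\wedge\cdots\wedge e^{2k}$ one gets, $k$ being even, $\xi\wedge\xi=-2\,e^{1}\wedge\cdots\wedge e^{2k}$, so $f(\xi)=-2<0$, $M$ is not positive semidefinite, and $f$ is not convex.

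For (i) I would argue by Fourier analysis, exactly as for quadratic functions of the gradient in the classical theory. By Lemma~\ref{Lemme formes quadratiques}~(ii) and Remark~\ref{Remarque sur les def de convexite}~(v) (using that the zeroth Fourier coefficient of $d\omega$ vanishes for $D$-periodic $\omega$), ext. quasiconvexity of the quadratic $f$ is equivalent to $\int_{D}f(d\omega)\geq0$ for all $\omega\in W^{1,\infty}_{per}(D;\Lambda^{k-1})$, where $D=(0,1)^{n}$; by Lemma~\ref{Lemme formes quadratiques}~(iii), ext. one convexity is equivalent to $f(a\wedge b)\geq0$ for all $a\in\Lambda^{k-1}$ and $b\in\Lambda^{1}$; and ext. quasiconvex $\Rightarrow$ ext. one convex is part of Theorem~\ref{Thm general sur poly quasi ...}. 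For the converse, expanding $\omega$ in a Fourier series and writing $e_{\lambda}=\sum_{j}\lambda_{j}e^{j}\in\Lambda^{1}$, one has $\widehat{d\omega}(\lambda)=2\pi i\,e_{\lambda}\wedge\widehat{\omega}(\lambda)$, so Parseval's identity gives $\int_{D}f(d\omega)=\sum_{\lambda\in\mathbb{Z}^{n}}\langle M\,\widehat{d\omega}(\lambda);\overline{\widehat{d\omega}(\lambda)}\rangle$, the quadratic form being extended to a Hermitian form. Decomposing $\widehat{\omega}(\lambda)=u+iv$ with $u,v\in\Lambda^{k-1}$ real, the real and imaginary parts of $\widehat{d\omega}(\lambda)$ are real multiples of $e_{\lambda}\wedge u$ and $e_{\lambda}\wedge v$; since $M$ is symmetric the mixed term cancels, so each summand is a nonnegative multiple of $f(e_{\lambda}\wedge u)+f(e_{\lambda}\wedge v)$, which is $\geq0$ by ext. one convexity. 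Hence $\int_{D}f(d\omega)\geq0$ and $f$ is ext. quasiconvex; this is the exterior-algebra counterpart of the classical equivalence of rank one convexity and quasiconvexity for quadratic forms.

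With (i) available, (v) follows from Proposition~\ref{Proposition contre-exemple quasi poly (cas quadratique)}: in each admissible case one has $2\leq k\leq n-2$, and by the remark following that proposition there is $\alpha\in\Lambda^{k}$ that is not $1$-divisible and satisfies $\alpha\wedge\alpha=0$ (automatic for $k$ odd), so $f(\xi)=|\xi|^{2}-c(\langle\alpha;\xi\rangle)^{2}$ is ext. quasiconvex but not ext. polyconvex, while ext. quasiconvex $\Leftrightarrow$ ext. one convex is (i). For (ii): when $n=2$ or $n=3$ one has $k=n$ or $k=n-1$ and everything collapses to convexity by Theorem~\ref{Thm general sur poly quasi ...}~(ii). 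When $n=4$, strictness of convex $\Rightarrow$ ext. polyconvex is the case $k=2$ of (iv) (the example $f(\xi)=\langle e^{1234};\xi\wedge\xi\rangle$); for ext. one convex $\Rightarrow$ ext. polyconvex, I would combine Lemma~\ref{Lemme formes quadratiques}~(i),(iii) with the fact that on $\Lambda^{2}$ the $1$-divisible elements are exactly those with $\xi\wedge\xi=0$: ext. one convexity then says $f\geq0$ on the null cone of the indefinite quadratic form $\xi\mapsto\langle e^{1234};\xi\wedge\xi\rangle$ on $\Lambda^{2}(\mathbb{R}^{4})\cong\mathbb{R}^{6}$, so by the classical structure result for a pair of quadratic forms (a form nonnegative on the null cone of an indefinite quadratic form differs from a positive semidefinite one by a scalar multiple of the latter) there is $\beta\in\Lambda^{4}$ with $f(\xi)\geq\langle\beta;\xi\wedge\xi\rangle$, i.e. $f$ is ext. polyconvex.

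The remaining case, $k=2$ and $n\geq6$ in (ii), where ext. polyconvexity has to be shown strictly stronger than ext. quasiconvexity, is the point I expect to be hardest. Here I would produce an explicit symmetric $M$ on $\Lambda^{2}(\mathbb{R}^{6})$ whose form is ext. one convex — verified via Lemma~\ref{Lemme formes quadratiques}~(iii), hence ext. quasiconvex by (i) — but not ext. polyconvex, the latter meaning that no $\beta\in\Lambda^{4}(\mathbb{R}^{6})$ satisfies $f(\xi)\geq\langle\beta;\xi\wedge\xi\rangle$ for all $\xi$; for $n>6$ one restricts to the coordinate subspace spanned by $e^{1},\dots,e^{6}$, under which ext. one convexity is preserved and ext. polyconvexity descends. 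This is the exterior-algebra analogue of Terpstra's classical rank-one-convex-but-not-polyconvex quadratic form, and the reason $n=4$ behaves differently is that for $n\geq5$ the quantity $\xi\wedge\xi$ takes values in a space of dimension greater than one, so ext. polyconvexity pits a single $\beta$ against a whole family of quadratic constraints and the pair-of-forms argument used for $n=4$ no longer applies; the crux, and the main obstacle, is the explicit construction of $M$ and the verification that it cannot be corrected by any such $\beta$.
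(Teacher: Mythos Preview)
Your proposal is correct and follows essentially the same route as the paper: part (i) via Plancherel/Fourier on the torus, parts (iii)--(iv) by the algebraic observations and the example $f(\xi)=\langle c;\xi\wedge\xi\rangle$, part (v) via Proposition~\ref{Proposition contre-exemple quasi poly (cas quadratique)} and its accompanying remark, and part (ii) for $n=4$ via the pair-of-quadratic-forms argument (the paper cites Marcellini's theorem for exactly this step). The one place your outline stops short is the $k=2$, $n\ge6$ case of (ii): you correctly anticipate that what is needed is a Terpstra-type quadratic form on $\Lambda^{2}(\mathbb{R}^{6})$ that is ext.\ one convex but for which no $\beta\in\Lambda^{4}$ gives $f(\xi)\ge\langle\beta;\xi\wedge\xi\rangle$, and the paper supplies precisely such an explicit construction in Theorem~\ref{Thm contre exemple quadratique k=2} (inspired by Serre and Terpstra), together with the verification that no $\beta$ works by testing against a carefully chosen four-parameter family of $\xi$'s with $g(\xi)=0$.
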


\begin{remark}
\textbf{(i)} We recall that when $k=1$ all notions of convexity are
equivalent.\smallskip

\textbf{(ii)} When $k=2$ and $n=5,$ the equivalence between ext. polyconvexity and
ext. quasiconvexity remains open.
\end{remark}

\begin{proof}
\textbf{(i)} The result follows from Lemma \ref{Lemme formes quadratiques} and Plancherel formula. The proof is similar to that of the case of the gradient (cf. Theorem 5.25 and Lemma 5.28 in \cite{Dacorogna 2007}).\smallskip

\textbf{(ii)} If $n=2$ or $n=3,$ the result follows from Theorem
\ref{Thm general sur poly quasi ...} (ii). If $n\geq6,$ see Theorem
\ref{Thm contre exemple quadratique k=2}. So we now assume that $n=4$ (for the
counter implication see (iv) below). We only have to prove that%
\[
f\text{ ext. one convex }\Rightarrow\text{ }f\text{ ext. polyconvex.}%
\]
We know (by ext. one convexity) that, for every $a,b\in\Lambda^{1}\left(
\mathbb{R}^{4}\right)  $%
\[
f\left(  a\wedge b\right)  \geq0,
\]
and we wish to show (cf. Lemma \ref{Lemme formes quadratiques}) that we can
find $\alpha\in\Lambda^{4}\left(  \mathbb{R}^{4}\right)  $ so that%
\[
f\left(  \xi\right)  \geq\left\langle \alpha;\xi\wedge\xi\right\rangle .
\]

\emph{Step 1. }Let us change slightly the notations and write $\xi\in
\Lambda^{2}\left(  \mathbb{R}^{4}\right)  $ as a vector of $\mathbb{R}^{6}$ in
the following manner%
\[
\xi=\left(  \xi_{12},\xi_{13},\xi_{14},\xi_{23},\xi_{24},\xi_{34}\right),
\]
and therefore $f$ can be seen as a quadratic form over $\mathbb{R}^{6}$ which
is non-negative whenever the quadratic form (note also that $g$ is indefinite)%
\[
g\left(  \xi\right)  =\left\langle e^{1}\wedge e^{2}\wedge e^{3}\wedge
e^{4};\xi\wedge\xi\right\rangle =2\left(  \xi_{12}\xi_{34}-\xi_{13}\xi
_{24}+\xi_{14}\xi_{23}\right)
\]
vanishes. Indeed note that, by Proposition 2.37 in \cite{Csato-Dac-Kneuss 2012},
\[
g\left(  \xi\right)  =0\text{ }\Leftrightarrow\text{ }\xi\wedge\xi=0\text{
}\Leftrightarrow\text{ }\operatorname*{rank}\left[  \xi\right]\in\{0,2\}.
\]
By Proposition 2.43 in \cite{Csato-Dac-Kneuss 2012}, this last condition is equivalent to the existence of $a,b\in\Lambda
^{1}\left(  \mathbb{R}^{4}\right)  $ so that%
\[
\xi=a\wedge b,
\]
and by ext. one convexity we know that $f\left(  a\wedge b\right)  \geq
0.$\smallskip

\emph{Step 2. }We now invoke Theorem 2 in \cite{Marcellini 1984} to find
$\lambda\in\mathbb{R}$ such that%
\[
f\left(  \xi\right)  -\lambda g\left(  \xi\right)  \geq0.
\]
But this is exactly what we had to prove.\smallskip

\textbf{(iii)} This is a general fact. (See Remark \ref{Remarque sur les def de convexite}\,(ii) and Theorem
\ref{Thm general sur poly quasi ...})\smallskip

\textbf{(iv)} The counterexample is just%
\[
f\left(  \xi\right)  =\left\langle \alpha;\xi\wedge\xi\right\rangle,
\]
for any $\alpha\in\Lambda^{2k}$, $\alpha\neq
0.$\smallskip

\textbf{(v)} This is just Proposition
\ref{Proposition contre-exemple quasi poly (cas quadratique)} and the remark
following it. Indeed we consider the two following cases.

- If $k$ is odd (and since $3\leq k\leq n-3,$ then $n\geq6$), we know
from (iii) that $f$ is ext. polyconvex if and only if $f$ is convex and we
also know that there exists an exterior $k$-form which is not $1$-divisible.
Proposition \ref{Proposition contre-exemple quasi poly (cas quadratique)}
gives therefore the result.

- If $k$ is even and $4\leq k\leq n-2$ (which implies again $n\geq6$), then
there exists an exterior $k$-form $\alpha$ which is not $1$-divisible, but $\alpha
\wedge\alpha=0.$ The result thus follows again by Proposition
\ref{Proposition contre-exemple quasi poly (cas quadratique)}.
\end{proof}

\subsubsection{A counterexample for $k=2$}

We now turn to a counterexample that has been mentioned in Theorem
\ref{Thm formes quadratiques}.

\begin{theorem}
\label{Thm contre exemple quadratique k=2}Let $n\geq6.$ Then there exists a
quadratic form $f:\Lambda^{2}  \rightarrow
\mathbb{R}$ which is ext. one convex but not ext. polyconvex.
\end{theorem}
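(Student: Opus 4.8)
The plan is to reduce to the smallest interesting dimension, namely $n=6$, and exhibit an explicit quadratic form on $\Lambda^{2}(\mathbb{R}^{6})$; the general case $n\geq 6$ then follows by embedding $\mathbb{R}^{6}$ into $\mathbb{R}^{n}$ and extending the form trivially (the extra coordinates enter with a large positive multiple of their square, which does not disturb ext. one convexity since any $1$-divisible $a\wedge b$ in the larger space still pairs non-negatively, and does not restore ext. polyconvexity since the obstruction already lives on the $\Lambda^{2}(\mathbb{R}^{6})$ block). So the core of the argument is the case $n=6$.

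For $n=6$, the strategy is to invoke Proposition \ref{Proposition contre-exemple quasi poly (cas quadratique)}: it suffices to produce $\alpha\in\Lambda^{2}(\mathbb{R}^{6})$ that is not $1$-divisible with $\alpha\wedge\alpha=0$, because then $f(\xi)=|\xi|^{2}-c(\langle\alpha;\xi\rangle)^{2}$ is ext. one convex (hence, being quadratic, ext. quasiconvex by Theorem \ref{Thm formes quadratiques}(i)) but not ext. polyconvex, for a suitable $c>0$. The only genuine point to check is the existence of such an $\alpha$. Here, though, Remark (ii) following Proposition \ref{Proposition contre-exemple quasi poly (cas quadratique)} explicitly warns that for $k=2$ every $\alpha$ with $\alpha\wedge\alpha=0$ is automatically $1$-divisible — so this route is \emph{blocked}, and that is precisely why the theorem needs a separate proof. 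Consequently I would instead work directly with Lemma \ref{Lemme formes quadratiques}: build a symmetric $M:\Lambda^{2}(\mathbb{R}^{6})\to\Lambda^{2}(\mathbb{R}^{6})$ such that $\langle Ma\wedge b;a\wedge b\rangle\geq 0$ for all $a,b\in\Lambda^{1}$ (ext. one convexity), yet there is no $\beta\in\Lambda^{4}(\mathbb{R}^{6})$ with $\langle M\xi;\xi\rangle\geq\langle\beta;\xi\wedge\xi\rangle$ for all $\xi$ (failure of ext. polyconvexity).

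The mechanism I expect to use is a Sverak-type amplification across two independent $\Lambda^{2}(\mathbb{R}^{3})$-type blocks. Concretely, split the six basis $1$-forms into two triples and consider forms supported on $e^{1}\wedge e^{2},e^{1}\wedge e^{3},e^{2}\wedge e^{3}$ and on $e^{4}\wedge e^{5},e^{4}\wedge e^{6},e^{5}\wedge e^{6}$ together with a coupling term; one then tunes $M$ so that on each separate $1$-divisible direction the relevant $2\times 2$ minor stays non-negative, while the quadratic form $\langle M\xi;\xi\rangle-t\,\langle e^{1}\wedge\cdots\wedge e^{6};\xi\wedge\xi\rangle$ — the only candidate modification allowed by Lemma \ref{Lemme formes quadratiques}(i), since $\dim\Lambda^{4}(\mathbb{R}^{6})$ pairs with $\xi\wedge\xi$ through essentially one indefinite quadratic form up to the structure constants — fails to be positive semidefinite for every $t\in\mathbb{R}$. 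The key computation is to exhibit, for each $t$, a test bivector $\xi$ (necessarily \emph{not} $1$-divisible, so that ext. one convexity is not contradicted) on which $\langle M\xi;\xi\rangle - t\langle\beta;\xi\wedge\xi\rangle<0$; choosing two such $\xi$'s whose $\xi\wedge\xi$ have opposite pairings with $\beta$ forces the contradiction simultaneously for $t\geq 0$ and $t\leq 0$.

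The main obstacle will be the explicit verification of ext. one convexity: unlike the polyconvexity side, which reduces to a finite-dimensional semidefiniteness check, ext. one convexity is the statement $f(a\wedge b)\geq 0$ over the (non-linear) variety of $1$-divisible bivectors in $\mathbb{R}^{6}$, which is a quadratic constraint set of codimension one (cut out by the single Plücker relation $\alpha\wedge\alpha=0$). Proving $f\geq 0$ there amounts to showing $M$ is positive semidefinite modulo that quadric, i.e. that $M + \lambda\, G\geq 0$ for some $\lambda$, where $G$ is the Gram operator of $\xi\mapsto\langle e^{1\cdots 6};\xi\wedge\xi\rangle$ — which is exactly the S-lemma / Marcellini-type argument used in Theorem \ref{Thm formes quadratiques}(ii) for $n=4$, now applied in dimension $6$; the delicate part is designing $M$ so that this $\lambda$ exists (giving ext. one convexity) but no single $\lambda$ makes $M-\lambda G$ dominate $\xi\wedge\xi$ in the polyconvex sense, i.e. the S-lemma certificate for one-convexity and the polyconvexity certificate genuinely disagree. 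I would verify this by reducing $M$ to a block-diagonal normal form under the $O(3)\times O(3)$ symmetry and carrying out the two finite semidefiniteness checks by hand.
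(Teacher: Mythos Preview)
Your reduction to $n=6$ and your recognition that Proposition~\ref{Proposition contre-exemple quasi poly (cas quadratique)} is blocked for $k=2$ are both correct. But the plan that follows rests on a dimension-counting error that breaks both halves of the argument.

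For $k=2$, $n=6$ the space $\Lambda^{4}(\mathbb{R}^{6})$ has dimension $\binom{6}{4}=15$, not one; and the expression $\langle e^{1}\wedge\cdots\wedge e^{6};\xi\wedge\xi\rangle$ you write is the pairing of a $6$-form with a $4$-form, which is identically zero. The polyconvexity certificate $\beta$ in Lemma~\ref{Lemme formes quadratiques}(i) therefore ranges over a $15$-parameter family, and you cannot dispose of all of them by testing ``two $\xi$'s whose $\xi\wedge\xi$ have opposite pairings with $\beta$''. Likewise, the set of $1$-divisible elements of $\Lambda^{2}(\mathbb{R}^{6})$ is \emph{not} a codimension-one quadric: the condition $\xi\wedge\xi=0$ lives in $\Lambda^{4}(\mathbb{R}^{6})$ and imposes fifteen scalar quadratic constraints; the decomposable cone has dimension $9$ inside the $15$-dimensional $\Lambda^{2}(\mathbb{R}^{6})$. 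Hence the S-lemma/Marcellini argument you invoke (valid for a \emph{single} indefinite quadratic constraint, as in the $n=4$ case of Theorem~\ref{Thm formes quadratiques}(ii)) simply does not apply here, and your proposed certificate ``$M+\lambda G\geq 0$ for some scalar $\lambda$'' neither establishes ext.\ one convexity nor is it the same object as the polyconvexity certificate.

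The paper circumvents both difficulties by a direct construction in the spirit of Serre and Terpstra: it writes down an explicit non-negative sum of squares $g(\xi)\geq 0$ on $\Lambda^{2}(\mathbb{R}^{6})$ and sets $f=g-\gamma|\xi|^{2}$ with $\gamma=\inf\{g(a\wedge b):|a\wedge b|=1\}$, so that ext.\ one convexity is automatic from the definition of $\gamma$ and the real work is a case analysis showing $\gamma>0$. For the failure of ext.\ polyconvexity it produces a $4$-parameter family of test forms $\xi=\xi(a,b,c,d)$ on which $g\equiv 0$ but $\xi\wedge\xi$ is rich enough that, by a short case split on the signs of the components of an arbitrary $\alpha\in\Lambda^{4}(\mathbb{R}^{6})$, one always finds a choice making $f(\xi)+\tfrac{1}{2}\langle\alpha;\xi\wedge\xi\rangle<0$. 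Both of these steps are genuinely multi-dimensional computations that your one-parameter scheme cannot replace.
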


\begin{proof}
It is enough to establish the theorem for $n=6.$ Our counterexample is inspired by Serre
\cite{Serre 1983} and Terpstra \cite{Terpstra} (see Theorem 5.25 (iii) in
\cite{Dacorogna 2007}). It is more convenient to write here $\xi\in\Lambda
^{2}\left(  \mathbb{R}^{6}\right)  $ as%
\[
\xi=\sum_{1\leq i<j\leq6}\xi_{j}^{i}\,e^{i}\wedge e^{j}.
\]
So let%
\[
g\left(  \xi\right)  =\left(  \xi_{2}^{1}\right)  ^{2}+\left(  \xi_{3}%
^{1}\right)  ^{2}+\left(  \xi_{3}^{2}\right)  ^{2}+\left(  \xi_{5}^{4}\right)
^{2}+\left(  \xi_{6}^{4}\right)  ^{2}+\left(  \xi_{6}^{5}\right)
^{2}+h\left(  \xi\right),
\]
where%
\[
h\left(  \xi\right)  =\left(  \xi_{4}^{1}-\xi_{5}^{3}-\xi_{6}^{2}\right)
^{2}+\left(  \xi_{5}^{1}-\xi_{4}^{3}+\xi_{6}^{1}\right)  ^{2}+\left(  \xi
_{4}^{2}-\xi_{4}^{3}-\xi_{6}^{1}\right)  ^{2}+\left(  \xi_{5}^{2}\right)
^{2}+\left(  \xi_{6}^{3}\right)  ^{2}.
\]
Note that $g\geq0.$ We claim that there exists $\gamma>0$ so that%
\[
f\left(  \xi\right)  =g\left(  \xi\right)  -\gamma\left\vert \xi\right\vert
^{2}%
\]
is ext. one convex (cf. Step 1) but not ext. polyconvex (cf. Step 2).\smallskip

\emph{Step 1.} Define%
\begin{equation}\label{21.6.2013.1}
\gamma:=\inf\left\{  g\left(  a\wedge b\right)  :a,b\in\Lambda^{1}\left(
\mathbb{R}^{6}\right)  ,\;\left\vert a\wedge b\right\vert =1\right\}  .
\end{equation}
Note that $\gamma\geq 0$, and it follows from Lemma \ref{Lemme formes quadratiques} that $f$ is ext. one convex.\smallskip

\noindent Let us claim that, in fact $\gamma>0$, which will imply in \emph{Step 2} that $f$ is not ext. polyconvex. To show that $\gamma>0$, we proceed by contradiction and assume that $\gamma=0.$ This implies that we
can find $a,b\in\Lambda^{1}\left(  \mathbb{R}^{6}\right)  $ with $\left\vert
a\wedge b\right\vert =1$ such that%
\[
\left\{
\begin{array}
[c]{c}%
a^{1}b_{2}-a^{2}b_{1}=0\\
a^{1}b_{3}-a^{3}b_{1}=0\\
a^{2}b_{3}-a^{3}b_{2}=0
\end{array}
\right.  \quad\left\{
\begin{array}
[c]{c}%
a^{4}b_{5}-a^{5}b_{4}=0\\
a^{4}b_{6}-a^{6}b_{4}=0\\
a^{5}b_{6}-a^{6}b_{5}=0
\end{array}
\right.  \quad\left\{
\begin{array}
[c]{c}%
a^{2}b_{5}-a^{5}b_{2}=0\\
a^{3}b_{6}-a^{6}b_{3}=0
\end{array}
\right.
\]%
\[
\left\{
\begin{array}
[c]{c}%
\left(  a^{1}b_{4}-a^{4}b_{1}\right)  -\left(  a^{3}b_{5}-a^{5}b_{3}\right)
-\left(  a^{2}b_{6}-a^{6}b_{2}\right)  =0\\
\left(  a^{1}b_{5}-a^{5}b_{1}\right)  -\left(  a^{3}b_{4}-a^{4}b_{3}\right)
+\left(  a^{1}b_{6}-a^{6}b_{1}\right)  =0\\
\left(  a^{2}b_{4}-a^{4}b_{2}\right)  -\left(  a^{3}b_{4}-a^{4}b_{3}\right)
-\left(  a^{1}b_{6}-a^{6}b_{1}\right)  =0.
\end{array}
\right.
\]
Let us introduce some notation, we write%
\[
\underline{a}=\left(
\begin{array}
[c]{c}%
a^{1}\\
a^{2}\\
a^{3}%
\end{array}
\right)  ,\quad\underline{b}=\left(
\begin{array}
[c]{c}%
b_{1}\\
b_{2}\\
b_{3}%
\end{array}
\right)  ,\quad\overline{a}=\left(
\begin{array}
[c]{c}%
a^{4}\\
a^{5}\\
a^{6}%
\end{array}
\right)  ,\quad\overline{b}=\left(
\begin{array}
[c]{c}%
b_{4}\\
b_{5}\\
b_{6}%
\end{array}
\right)  .
\]
Note that the first and second sets of equations lead to%
\[
\underline{a}\Vert\underline{b}\text{\quad and\quad}\overline{a}\Vert
\overline{b}.
\]
We consider two cases starting with the generic case.\smallskip

\emph{Case 1:} there exist $\lambda,\mu\in\mathbb{R}$ such that%
\[
\underline{a}=\lambda\,\underline{b}\text{\quad and\quad}\overline{a}%
=\mu\,\overline{b}.
\]
(The same reasoning applies to all other cases, for example $\underline{b}=\lambda\,\underline{a}$
and $\overline{b}=\mu\,\overline{a}$). Note that $\lambda\neq\mu,$ otherwise
we would have $a=\lambda\,b$ and thus $a\wedge b=0$ contradicting the fact
that $\left\vert a\wedge b\right\vert =1.$ Inserting this in the third and
fourth sets of equations we get%
\[
\left\{
\begin{array}
[c]{c}%
\left(  \lambda-\mu\right)  b_{2}b_{5}=0\\
\left(  \lambda-\mu\right)  b_{3}b_{6}=0
\end{array}
\right.  \quad\left\{
\begin{array}
[c]{c}%
\left(  \lambda-\mu\right)  \left[  b_{1}b_{4}-b_{3}b_{5}-b_{2}b_{6}\right]
=0\\
\left(  \lambda-\mu\right)  \left[  b_{1}b_{5}-b_{3}b_{4}+b_{1}b_{6}\right]
=0\\
\left(  \lambda-\mu\right)  \left[  b_{2}b_{4}-b_{3}b_{4}-b_{1}b_{6}\right]
=0
\end{array}
\right.
\]
and thus, since $\lambda\neq\mu,$%
$$
b_{2}b_{5}=b_{3}b_{6}=b_{1}b_{4}-b_{3}b_{5}-b_{2}b_{6}=
b_{1}b_{5}-b_{3}b_{4}+b_{1}b_{6}=
b_{2}b_{4}-b_{3}b_{4}-b_{1}b_{6}=0.
$$

We have to consider separately the cases $b_{2}=b_{3}=0,$ $b_{5}=b_{6}=0,$
$b_{2}=b_{6}=0$ and $b_{3}=b_{5}=0.$ We do only the first case. Other cases can be handled similarly. Let us assume that $b_{2}=b_{3}=0.$
We thus have%
$$
b_{1}b_{4}=b_{1}b_{5}+b_{1}b_{6}=b_{1}b_{6}=0.
$$
So either $b_{1}=0$ and thus $\underline{b}=0$ and hence $\underline{a}=0$ and
again this implies that $a=\mu\,b$ which contradicts the fact that $\left\vert
a\wedge b\right\vert =1.$ Or $b_{4}=b_{5}=b_{6}=0$ and thus $\overline
{b}=\overline{a}=0$ which as before contradicts the fact that $\left\vert
a\wedge b\right\vert =1.$

\smallskip

\emph{Case 2:} $\underline{b}=0$ and $\overline{a}=0$ (or $\underline{a}=0$
and $\overline{b}=0$ which is handled similarly). This means that $a^{4}%
=a^{5}=a^{6}=0$ and $b_{1}=b_{2}=b_{3}=0.$ We therefore have%
$$
a^{2}b_{5}=a^{3}b_{6}=a^{1}b_{4}-a^{3}b_{5}-a^{2}b_{6}=a^{1}b_{5}-a^{3}b_{4}+a^{1}b_{6}=a^{2}b_{4}-a^{3}b_{4}-a^{1}b_{6}=0.
$$
Four cases can happen $a^{2}=a^{3}=0,$ $a^{2}=b_{6}=0,$ $a^{3}=b_{5}=0$ and
$b_{5}=b_{6}=0.$ Again, we handle only the first case. Other cases can be treated similarly.
Assuming that $a^{2}=a^{3}=0$, we have
\[
a^{1}b_{4}=a^{1}b_{5}+a^{1}b_{6}=a^{1}b_{6}=0.
\]
So either $a^{1}=0$ and thus $a=0$ which is impossible. Or $b_{4}=b_{5}%
=b_{6}=0$ and thus $b=0$ which again cannot happen. Hence, we have proved that $\gamma$ defined in Equation (\ref{21.6.2013.1}) is positive.\smallskip

\emph{Step 2.} We now show that $f$ is not ext. polyconvex. In view of Lemma
\ref{Lemme formes quadratiques} (i), it is sufficient to show that for every
$\alpha\in\Lambda^{4}\left(  \mathbb{R}^{6}\right)  ,$ there exists $\xi
\in\Lambda^{2}\left(  \mathbb{R}^{6}\right)  $ such that%
\[
f\left(  \xi\right)  +\frac{1}{2}\left\langle \alpha;\xi\wedge\xi\right\rangle
<0.
\]
We prove that the above inequality holds for forms $\xi$ of the following form%
\begin{align*}
\xi &  =\left(  b+d\right)  e^{1}\wedge e^{4}+\left(  c-a\right)  e^{1}\wedge
e^{5}+a\, e^{1}\wedge e^{6}\smallskip\\
&  +\left(  c+a\right)  e^{2}\wedge e^{4}+b\,  e^{2}\wedge
e^{6}+c\,  e^{3}\wedge e^{4}+d\,  e^{3}\wedge e^{5}.
\end{align*}
Note that%
\begin{align*}
\frac{1}{2}\xi\wedge\xi &  =\left(  c^{2}-a^{2}\right)  e^{1}\wedge
e^{2}\wedge e^{4}\wedge e^{5}+\left(  ac+a^{2}-b^{2}-bd\right)  e^{1}\wedge
e^{2}\wedge e^{4}\wedge e^{6}\smallskip\\
&  +\left(  ab-bc\right)  e^{1}\wedge e^{2}\wedge e^{5}\wedge e^{6}+\left(
c^{2}-ac-bd-d^{2}\right)  e^{1}\wedge e^{3}\wedge e^{4}\wedge e^{5}%
\smallskip\\
&  +ac\,  e^{1}\wedge e^{3}\wedge e^{4}\wedge e^{6}+ad\, e^{1}\wedge e^{3}\wedge e^{5}\wedge e^{6}\smallskip\\
&  +\left(  -cd-ad\right)  e^{2}\wedge e^{3}\wedge e^{4}\wedge e^{5}+bc\,  e^{2}\wedge e^{3}\wedge e^{4}\wedge e^{6}\smallskip\\
&  +bd\, e^{2}\wedge e^{3}\wedge e^{5}\wedge e^{6}%
\end{align*}
For such forms we have $g\left(  \xi\right)  =0$ and therefore%
\[
f\left(  \xi\right)  =-\gamma\left\vert \xi\right\vert ^{2}=-\gamma\left[
\left(  b+d\right)  ^{2}+\left(  c-a\right)  ^{2}+a^{2}+\left(  c+a\right)
^{2}+b^{2}+c^{2}+d^{2}\right].
\]
Moreover,
\begin{align*}
\frac{1}{2}\left\langle \alpha;\xi\wedge\xi\right\rangle  &  =\alpha
_{1245}\left(  c^{2}-a^{2}\right)  +\alpha_{1246}\left(  ac+a^{2}%
-b^{2}-bd\right)  \smallskip\\
&  +\alpha_{1256}\left(  ab-bc\right)  +\alpha_{1345}\left(  c^{2}%
-ac-bd-d^{2}\right)  +\alpha_{1346}\,ac \smallskip\\
&  +\alpha_{1356}\,ad  +\alpha_{2345}\left(  -cd-ad\right)
+\alpha_{2346}\,bc  +\alpha_{2356}\,bd  .
\end{align*}
We consider three cases.\smallskip

\textit{Case 1}. If $\alpha_{1246}>0,$ then take $a=c=d=0$ and $b\neq0,$ to
get%
$$
f\left(  \xi\right)  +\frac{1}{2}\left\langle \alpha;\xi\wedge\xi
\right\rangle =-\gamma\left(  2b^{2}\right)  -\alpha_{1246}b^{2}<0.
$$

\textit{Case 2}. If $\alpha_{1345}>0,$ then take $a=b=c=0$ and $d\neq0,$ to
get%
\[
f\left(  \xi\right)  +\frac{1}{2}\left\langle \alpha;\xi\wedge\xi\right\rangle
=-\gamma\left(  2d^{2}\right)  -\alpha_{1345}d^{2}<0.
\]
We therefore can assume that $\alpha_{1246}\leq0$ and $\alpha_{1345}\leq
0.$\smallskip

\textit{Case 3}. If $\alpha_{1245}+\alpha_{1345}<0$ $\left(  \alpha_{1246}%
\leq0,\text{ }\alpha_{1345}\leq0\right)  ,$ then take $a=b=d=0$ and $c\neq0$
to get%
\[
f\left(  \xi\right)  +\frac{1}{2}\left\langle \alpha;\xi\wedge\xi\right\rangle
=-\gamma\left(  3c^{2}\right)  +\left(  \alpha_{1245}+\alpha_{1345}\right)
c^{2}<0.
\]
We therefore assume $\alpha_{1246}\leq0,$ $\alpha_{1345}\leq0$ and
$\alpha_{1245}+\alpha_{1345}\geq0.$ From these three inequalities we deduce
that $\alpha_{1246}-\alpha_{1245}\leq0,$ and then taking $b=c=d=0$ and
$a\neq0,$ we get%
\[
f\left(  \xi\right)  +\frac{1}{2}\left\langle \alpha;\xi\wedge\xi\right\rangle
=-\gamma\left(  3a^{2}\right)  +\left(  \alpha_{1246}-\alpha_{1245}\right)
a^{2}<0.
\]
This concludes the proof of the theorem.$\smallskip$
\end{proof}

\subsection{Ext. one convexity does not imply ext. quasiconvexity}

We now give an important counterexample for any $k\geq2.$ It is an adaptation
of the fundamental result of Sverak \cite{Sverak 1992a} (see also Theorem 5.50
in \cite{Dacorogna 2007}).

\begin{theorem}
\label{Thm comme Sverak}Let $2\leq k\leq n-3.$ Then there exists
$f:\Lambda^{k}  \rightarrow\mathbb{R}$ ext. one
convex but not ext. quasiconvex.
\end{theorem}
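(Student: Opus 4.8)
\emph{The plan is to adapt Šverák's classical construction to the setting of differential forms.} The key point in the gradient case ($k=1$) is that one works with $\mathbb{Z}^n$-periodic test maps whose gradients are constrained to lie in a suitable linear subspace, and one exhibits a polynomial (in fact quartic) function which is positive semidefinite on the wave cone — here the cone of $1$-divisible $k$-forms $\alpha\wedge\beta$ — but whose average against an explicit periodic perturbation is strictly negative. Since we are assuming $2\le k\le n-3$, Proposition \ref{Proposition contre-exemple quasi poly (cas quadratique)} and its accompanying remark guarantee there are $k$-forms that are not $1$-divisible, which is exactly the algebraic room the construction needs: the gradient-case obstruction ``every $3\times 3$ matrix of rank $\le 1$ is in the cone'' is replaced by ``there is a suitable three-dimensional subspace $E\subset\Lambda^k$ containing no nonzero $1$-divisible form.''

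\emph{First I would} isolate such a subspace. Fix three linearly independent $k$-forms $\lambda^1,\lambda^2,\lambda^3\in\Lambda^k$ spanning a subspace $E$ with $E\cap\{\,a\wedge b : a\in\Lambda^{k-1},\,b\in\Lambda^1\,\}=\{0\}$; existence follows from the non-$1$-divisibility results quoted above (one can take, e.g., combinations built from the explicit non-$1$-divisible forms of Remark following Proposition \ref{Proposition contre-exemple quasi poly (cas quadratique)}, padded with extra basis vectors to reach dimension $3$). \emph{Next} I would transplant Šverák's quartic: on $E$, identified with $\mathbb{R}^3$ via coordinates $(u_1,u_2,u_3)$, define
\[
\Phi(u)=u_1u_2u_3,
\]
and on a complementary coordinate $v$ collecting the remaining components of $\xi\in\Lambda^k$, set, for $\varepsilon,\gamma>0$ to be chosen,
\[
f(\xi)=\Phi(u)+\varepsilon\bigl(|u|^2+|u|^4\bigr)+\frac{1}{\gamma}|v|^2 .
\]
The first term is the source of non-quasiconvexity; the $\varepsilon$-term restores ext. one convexity on $E$ because along any line $t\mapsto u+t\,w$ with $w$ in the (image in $E$ of the) wave cone the cubic $\Phi$ is actually affine — this is the analogue of the fact that $\Phi$ restricted to rank-one lines is linear, and it is precisely here that $E$ avoiding $1$-divisible forms is used; the $v$-term with $1/\gamma$ large penalizes any excursion of $d\omega$ out of $E$ and is what makes the full $f$ ext. one convex on all of $\Lambda^k$.

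\emph{Then} I would check ext. one convexity: for $\xi\in\Lambda^k$, $\alpha\in\Lambda^{k-1}$, $\beta\in\Lambda^1$, write $\alpha\wedge\beta=(w,z)$ in the $(u,v)$-splitting; compute $g''(0)$ for $g(t)=f(\xi+t\,\alpha\wedge\beta)$, obtaining a quadratic form in $(w,z)$ of the shape $(\text{bounded in }w,z)+2\varepsilon|w|^2+\tfrac{2}{\gamma}|z|^2$ plus the Hessian of $\Phi$ contracted with $w$; since $\Phi$ is affine along admissible directions in $E$ the Hessian-of-$\Phi$ term is controlled by $|w||z|$ (it involves at least one ``out of $E$'' component), so choosing $\gamma$ small then $\varepsilon$ small but fixed makes $g''(0)\ge 0$ for all admissible directions. \emph{The hard part will be} the failure of ext. quasiconvexity: here I would build, via Lemma \ref{approximation theorem} together with the periodic reformulation in Remark \ref{Remarque sur les def de convexite}(v), a $1$-periodic $\omega\in W^{1,\infty}_{per}(D;\Lambda^{k-1})$ whose exterior derivative $d\omega$ realizes the three oscillating modes corresponding to $u\mapsto(\sin 2\pi x_1,\sin 2\pi x_2,\sin 2\pi x_3)$-type perturbations inside $E$ while keeping $v$ small — the solvability of this prescribed-$d\omega$ problem on the torus is exactly what Lemma \ref{dik-form:1} and Lemma \ref{approximation theorem} supply, and is where the differential-form machinery replaces the elementary ``gradient of a trigonometric polynomial'' computation. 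For that $\omega$ one computes $\int_D \Phi(d\omega)\,<\,0$ (the classical $\int \sin a\,\sin b\,\sin c$ bookkeeping) while $\int_D \varepsilon(|u|^2+|u|^4)+\tfrac{1}{\gamma}|v|^2$ is $O(\varepsilon)$; scaling the amplitude of the oscillation and then shrinking $\varepsilon$ yields $\int_D f(d\omega)<0=f(0)\,\mathrm{meas}(D)$, contradicting ext. quasiconvexity. I expect the genuinely delicate points to be (a) producing the periodic potential $\omega$ with the prescribed dominant modes and controllably small remainder — this must be done carefully so that the ``error'' components landing in $v$ can be made negligible relative to the strictly negative main term — and (b) verifying that the chosen subspace $E$ can simultaneously be made three-dimensional, $1$-divisibility-free, \emph{and} compatible with a genuine closed-form oscillation (i.e. that the three desired modes are themselves of the form $d(\text{something periodic})$), which again reduces to the divisibility criterion of Lemma \ref{dik-form:1}. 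The rest — continuity, local boundedness, the reduction from general $n$ to the minimal admissible $n=k+3$ by embedding $\mathbb{R}^{k+3}\hookrightarrow\mathbb{R}^n$ — is routine and parallels Theorem 5.50 in \cite{Dacorogna 2007}.
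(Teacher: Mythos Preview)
Your plan has the right overall architecture (cubic on a three-dimensional subspace, $\varepsilon$-regularization, penalty on the complement, explicit periodic test form), but the choice of the subspace $E$ is fatal and the error propagates through both halves of the argument.

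You ask for $E\cap\{a\wedge b:a\in\Lambda^{k-1},\,b\in\Lambda^1\}=\{0\}$. With this choice no periodic test form can be built with small $v$-component. Indeed, for periodic $\omega$ the Fourier coefficient of $d\omega$ at a nonzero frequency $m\in\mathbb{Z}^n$ equals $2\pi i\,m\wedge\widehat{\omega}(m)$, hence is $1$-divisible; by compactness there is $c>0$ with $|(I-P)\mu|\ge c|\mu|$ for every $1$-divisible $\mu$ (since $P\mu=\mu$ would force $\mu\in E$, hence $\mu=0$). Parseval then gives $\|v\|_{L^2}=\|(I-P)d\omega\|_{L^2}\ge c\,\|d\omega\|_{L^2}$, so the ``error'' $v$ is never negligible relative to $u$, and the term $\tfrac{1}{\gamma}\int|v|^2$ (with $\gamma$ small, as you need for ext.\ one convexity) overwhelms any negative contribution from $\Phi$. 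Lemmas~\ref{dik-form:1} and~\ref{approximation theorem} do not rescue this: they produce $d\omega$ oscillating between two values whose \emph{difference} is $1$-divisible, so the range of $d\omega$ always meets the $1$-divisible cone. (Incidentally, three \emph{independent} sines give $\int u_1u_2u_3=0$, not a negative number; one needs a resonance such as $\cos x_1,\cos x_2,\cos(x_1{+}x_2)$.)

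The same mis-choice breaks your ext.\ one convexity step. Writing a $1$-divisible direction as $\alpha\wedge\beta=(w,z)$, the term $\nabla^2\Phi(u)[w,w]=2(u_3w_1w_2+u_2w_1w_3+u_1w_2w_3)$ is quadratic in $w$ alone and is \emph{not} controlled by $|w|\,|z|$: nothing forces $w=P(\alpha\wedge\beta)$ to be small or axis-aligned just because $\alpha\wedge\beta\notin E$. What the paper does instead (Lemma~\ref{Lemme algebrique Sverak}) is the opposite of your requirement: it constructs $L=\operatorname{span}\{e^1\wedge\alpha,\,e^2\wedge\beta,\,(e^1{+}e^2)\wedge\gamma\}$ with explicit $\alpha,\beta,\gamma\in\Lambda^{k-1}$, so that $L$ is \emph{spanned} by $1$-divisible forms --- which is exactly why the simple potential $\omega=(\sin x_1)\alpha+(\sin x_2)\beta+(\sin(x_1{+}x_2))\gamma$ has $d\omega\in L$ identically, with $v\equiv 0$ --- yet every $1$-divisible element $(x,y,z)\in L$ lies on a coordinate axis ($xy=xz=yz=0$), which is precisely what makes $g(x,y,z)=-xyz$ ext.\ one affine along those directions. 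The genuine work in the theorem is producing a three-dimensional $L$ satisfying \emph{both} constraints simultaneously; this is an explicit case-by-case construction in $\Lambda^{k-1}(\mathbb{R}^{k+3})$ and is not supplied by the non-$1$-divisibility results you cite.
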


\begin{remark}
We know that when $k=1,n-1,n$ or $k=n-2$ is odd, then%
\[
f\text{ convex }\Leftrightarrow\text{ }f\text{ ext. polyconvex }%
\Leftrightarrow\text{ }f\text{ ext. quasiconvex }\Leftrightarrow\text{
}f\text{ ext. one convex.}%
\]
Therefore only the case $k=n-2\geq2$ even (including $k=2$ and $n=4$) remains open.
\end{remark}

The main algebraic tool in order to adapt Sverak example is given in the
following lemma.

\begin{lemma}
\label{Lemme algebrique Sverak}Let $k\geq2$ and $n=k+3.$ There exist%
\[
\alpha,\beta,\gamma\in\operatorname*{span}\left\{  e^{i_{1}}\wedge\cdots\wedge
e^{i_{k-1}},\quad3\leq i_{1}<\cdots<i_{k-1}\leq k+3\right\}  \subset
\Lambda^{k-1}\left(  \mathbb{R}^{k+3}\right),
\]
such that if%
\[
L=\operatorname*{span}\left\{  e^{1}\wedge\alpha,e^{2}\wedge\beta,\left(
e^{1}+e^{2}\right)  \wedge\gamma\right\},
\]
then any $1$-divisible $\xi=(x,y,z)=x e^{1}\wedge\alpha + ye^{2}\wedge\beta +z\left(
e^{1}+e^{2}\right)  \wedge\gamma \in L$ necessarily verifies%
\[
xy=xz=yz=0.
\]

\end{lemma}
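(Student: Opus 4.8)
The plan is to reduce the statement, via the elementary description of $1$-divisibility, to a property of three skew-symmetric $2$-forms on $\mathbb{R}^{k+1}$, and then to exhibit such forms. I would first recall that a $k$-form $\zeta$ is $1$-divisible if and only if there is a nonzero $b\in\Lambda^{1}$ with $b\wedge\zeta=0$ (necessity is immediate; for sufficiency one completes $b$ to a basis and reads off that $\zeta$ is divisible by $b$). Writing $V=\operatorname{span}\{e^{1},e^{2}\}$, $W=\operatorname{span}\{e^{3},\dots,e^{k+3}\}$, and for $\xi=(x,y,z)\in L$ setting $\mu_{1}=x\alpha+z\gamma$ and $\mu_{2}=y\beta+z\gamma$ in $\Lambda^{k-1}(W)$, so that $\xi=e^{1}\wedge\mu_{1}+e^{2}\wedge\mu_{2}$, a direct computation gives, for $b=se^{1}+te^{2}+c$ with $c\in\Lambda^{1}(W)$,
\[
b\wedge\xi=e^{1}\wedge e^{2}\wedge(s\mu_{2}-t\mu_{1})-e^{1}\wedge(c\wedge\mu_{1})-e^{2}\wedge(c\wedge\mu_{2}).
\]
The three terms are linearly independent, lying respectively in $e^{1}\wedge e^{2}\wedge\Lambda^{k-1}(W)$, $e^{1}\wedge\Lambda^{k}(W)$ and $e^{2}\wedge\Lambda^{k}(W)$, so $b\wedge\xi=0$ exactly when $s\mu_{2}=t\mu_{1}$, $c\wedge\mu_{1}=0$ and $c\wedge\mu_{2}=0$. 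Hence $\xi$ is $1$-divisible if and only if $\mu_{1},\mu_{2}$ are linearly dependent, or some nonzero $c\in\Lambda^{1}(W)$ satisfies $c\wedge\mu_{1}=c\wedge\mu_{2}=0$. Thus the lemma will follow once $\alpha,\beta,\gamma$ are chosen linearly independent and so that, whenever at least two of $x,y,z$ are nonzero, $\mu_{1},\mu_{2}$ are linearly independent and no nonzero $c\in\Lambda^{1}(W)$ annihilates both.

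Next I would apply the Hodge star of $W\cong\mathbb{R}^{k+1}$, an isomorphism $\Lambda^{k-1}(W)\to\Lambda^{2}(W)$ under which, by $\ast(c\wedge\mu)=\pm\,c\,\lrcorner\,\ast\mu$, the condition $c\wedge\mu=0$ becomes $c\,\lrcorner\,\ast\mu=0$. Setting $A=\ast\alpha$, $B=\ast\beta$, $C=\ast\gamma$, regarded as skew forms on $\mathbb{R}^{k+1}$ with radicals $R_{A},R_{B},R_{C}$, the requirement above translates, after treating $z=0$ and (by rescaling) $z=1$ separately, into: (I) $A,B,C$ are linearly independent; (II) $R_{A},R_{B},R_{C}$ intersect pairwise only in $0$; (III) there is no $v\neq0$ for which $v\,\lrcorner\,A$, $v\,\lrcorner\,B$, $v\,\lrcorner\,C$ are all nonzero and pairwise proportional. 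Indeed, if $v\in\ker(xA+C)\cap\ker(yB+C)$ with $x,y\neq0$ then $v\,\lrcorner\,C=-x\,(v\,\lrcorner\,A)=-y\,(v\,\lrcorner\,B)$, which by (I)--(III) forces all three interior products to vanish, i.e. $v\in R_{A}\cap R_{B}\cap R_{C}=\{0\}$; the cases $x=0$, $y=0$ and $z=0$ only use (II), and linear independence of $\mu_{1},\mu_{2}$ is equivalent to that of $xA+zC$ and $yB+zC$, which follows from (I).

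Finally I would produce $A,B,C$ satisfying (I)--(III); then $\alpha=\ast^{-1}A$, $\beta=\ast^{-1}B$, $\gamma=\ast^{-1}C$ automatically lie in the span of the $e^{i_{1}}\wedge\dots\wedge e^{i_{k-1}}$ with $3\leq i_{1}<\dots<i_{k-1}\leq k+3$. For $k=2$ one may simply take $\alpha=e^{3}$, $\beta=e^{4}$, $\gamma=e^{5}$ and verify (I)--(III) by hand. For $k\geq3$ I would use either an explicit model -- when $4\mid k+1$, the three parallel K\"ahler forms of a quaternionic structure on $\mathbb{R}^{k+1}$, for which every real combination $xA+zC$ comes from an invertible endomorphism so that (II) and (III) are vacuous -- or, in general, a genericity argument: the triples violating (I), or (II), or (III) each form a proper Zariski-closed subset of $(\Lambda^{2}\mathbb{R}^{k+1})^{3}$ (for (III), project the closed set $\{(v,A,B,C)\in\mathbb{P}(\mathbb{R}^{k+1})\times(\Lambda^{2}\mathbb{R}^{k+1})^{3}:\operatorname{rank}(v\,\lrcorner\,A,v\,\lrcorner\,B,v\,\lrcorner\,C)\leq1\}$ onto the last factor and bound its dimension by $3\binom{k+1}{2}-k+2<3\binom{k+1}{2}$), so a generic choice works. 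The only step that is not bookkeeping is (III); it is the exact analogue, for $1$-divisible forms, of Sverak's observation that his three-dimensional linear space of $3\times2$ matrices meets the rank-one cone only along the three coordinate lines, and verifying it -- whether by the quaternionic model or the dimension count -- is where the real content lies.
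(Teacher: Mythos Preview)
Your reduction is correct and elegant: the rewriting $\xi=e^{1}\wedge\mu_{1}+e^{2}\wedge\mu_{2}$, the splitting of $b\wedge\xi=0$ into the three independent conditions, and the passage via Hodge duality on $W\cong\mathbb{R}^{k+1}$ to the conditions (I)--(III) on three $2$-forms are all sound. The case analysis showing that (I)--(III) imply the desired conclusion (treating $z=0$ and, after scaling, $z=1$ with the subcases $x=0$, $y=0$, $xyz\neq0$) is complete, and your dimension count for (III) is right: the incidence variety over $\mathbb{P}^{k}$ has real dimension $3\binom{k}{2}+2k+2=3\binom{k+1}{2}-k+2$, which is strictly less than $3\binom{k+1}{2}$ precisely for $k\geq3$, so the projection is a proper (semialgebraic, closed by compactness of $\mathbb{P}^{k}(\mathbb{R})$) subset and a generic real triple works. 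For $k=2$ your explicit choice $\alpha=e^{3}$, $\beta=e^{4}$, $\gamma=e^{5}$ is easily checked via $\xi\wedge\xi=0$, and in fact coincides with what the paper's formulas specialize to.

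The paper proceeds quite differently for $k\geq3$: it writes down explicit $\alpha,\beta,\gamma$ (with separate formulas for $k$ even, $k=3$, and $k\geq5$ odd) and then verifies by a direct and somewhat lengthy case analysis on the coefficients of $a\in\Lambda^{1}$ that $a\wedge\xi=0$ forces $xy=xz=yz=0$. Your route trades that explicit verification for the Hodge-dual reformulation and a genericity argument; the gain is a conceptual explanation of \emph{why} such $\alpha,\beta,\gamma$ exist (it is a generic property of triples of $2$-forms on $\mathbb{R}^{k+1}$ once $k\geq3$) and a painless uniform treatment of all $k$, at the cost of not producing explicit forms except when $4\mid k+1$ (your quaternionic remark) or $k=2$. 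The paper's approach, by contrast, yields concrete $\alpha,\beta,\gamma$ in every case, which can be useful if one later needs them for computations, but requires the parity-dependent bookkeeping. Both arguments are valid proofs of the lemma as stated.
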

\begin{proof}
\emph{Step 1.} We choose, recall that $n=k+3,$%
\[
\alpha=\left\{
\begin{array}
[c]{cl}%
\sum_{i=2}^{l+1}\left(  \widehat{e^{2i}}\wedge\widehat{e^{2i+1}}\right)  &
\text{if }k=2l,\smallskip\\
\sum_{i=2}^{l+2}\left(  \widehat{e^{2i-1}}\wedge\widehat{e^{2i}}\right)  &
\text{if }k=2l+1.
\end{array}
\right.
\]%
\[
\beta=\left\{
\begin{array}
[c]{cl}%
\widehat{e^{3}}\wedge\widehat{e^{2l+3}} & \text{if }k=2l,\smallskip\\
\left(  \widehat{e^{3}}\wedge\widehat{e^{5}}\right)  +\left(  \widehat{e^{4}%
}\wedge\widehat{e^{6}}\right)  & \text{if }k=3,\smallskip\\
\sum_{i=2}^{l}\left(  \widehat{e^{2i-1}}\wedge\widehat{e^{2i}}\right)  &
\text{if }k=2l+1\text{ and }k\geq5.
\end{array}
\right.
\]%
\[
\gamma=\left\{
\begin{array}
[c]{cl}%
\sum_{i=2}^{l+1}\left(  \widehat{e^{2i-1}}\wedge\widehat{e^{2i}}\right)  &
\text{if }k=2l,\smallskip\\
\left(  \widehat{e^{2l+1}}\wedge\widehat{e^{2l+4}}\right)  +\left(
\widehat{e^{2l+2}}\wedge\widehat{e^{2l+3}}\right)  & \text{if }k=2l+1,
\end{array}
\right.
\]
where we write, by abuse of notations, for $3\leq i< j\leq k+3$,
\[
\widehat{e^{i}}\wedge\widehat{e^{j}}=e^{3}\wedge\cdots\wedge\widehat{e^{i}%
}\wedge\cdots\wedge\widehat{e^{j}}\wedge\cdots\wedge e^{k+3}.
\]
Observe that $\left\{  \alpha,\beta,\gamma\right\}  $ are linearly
independent.\smallskip

\emph{Step 2.} We now prove the statement, namely that if $\xi=\left(
x,y,z\right)  \in L$ is $1$-divisible (i.e. $\xi=b\wedge a$ for $a\in
\Lambda^{1}$ and $b\in\Lambda^{k-1}$), then necessarily%
\[
xy=xz=yz=0.
\]
Assume that $\xi\neq0$ (otherwise the result is trivial) and thus $a\neq0.$
Note that if $\xi=b\wedge a,$ then $a\wedge\xi=0.$ We write%
\[
a=\sum_{i=1}^{k+3}a_{i}\,e^{i}\neq0.
\]

\emph{Step 2.1.} Since $a\wedge\xi=0$ we deduce that the term involving
$e^{1}\wedge e^{2}$ must be $0$ and thus%
\[
-a_{2}x\,\alpha+a_{1}y\,\beta+\left(  a_{1}-a_{2}\right)  z\,\gamma=0.
\]
Since $\left\{  \alpha,\beta,\gamma\right\}  $ are linearly independent, we
deduce that%
\[
a_{2}x=a_{1}y=\left(  a_{1}-a_{2}\right)  z=0.
\]
From there we infer that $xy=xz=yz=0,$ as soon as either $a_{1}\neq0$ or
$a_{2}\neq0.$ So in order to establish the lemma it is enough to consider $a$
of the form%
\[
a=\sum_{i=3}^{k+3}a_{i}\,e^{i}\neq0.
\]
We therefore have%
\[
\sum_{i=3}^{k+3}a_{i}\,e^{i}\wedge\left[  e^{1}\wedge\left(  x\,\alpha
+z\,\gamma\right)  +e^{2}\wedge\left(  y\,\beta+z\,\gamma\right)  \right]  =0,
\]
which implies that%
\begin{equation}
\left\{
\begin{array}
[c]{c}%
a\wedge\left(  x\,\alpha+z\,\gamma\right)  =\sum_{i=3}^{k+3}a_{i}\,e^{i}%
\wedge\left(  x\,\alpha+z\,\gamma\right)  =0,\smallskip\\
a\wedge\left(  y\,\beta+z\,\gamma\right)  =\sum_{i=3}^{k+3}a_{i}\,e^{i}%
\wedge\left(  y\,\beta+z\,\gamma\right)  =0.
\end{array}
\right.  \label{(1) dans Contre ex sverak}%
\end{equation}
We continue the discussion considering separately the cases $k$ even, $k=3$
and $k\geq5$ odd. They are all treated in the same way and we prove it only in
the even case.\smallskip

\emph{Step 2.2: }$k=2l\geq2.$ We have to prove that if%
\[
a=\sum_{i=3}^{2l+3}a_{i}e^{i}\neq0
\]
satisfies (\ref{(1) dans Contre ex sverak}), then necessarily%
\[
xy=xz=yz=0.
\]
We find (up to a $+$ or $-$ sign but here it is immaterial)%
\begin{align*}
a\wedge\alpha=&\sum_{i=2}^{l+1}\left(  a_{2i+1}\widehat{e^{2i}}\right)
+\sum_{i=2}^{l}\left(  a_{2i}\widehat{e^{2i+1}}\right)  +a_{2l+2}%
\widehat{e^{2l+3}},\\
a\wedge\beta=&a_{2l+3}\widehat{e^{3}}+a_{3}\widehat{e^{2l+3}},\\
a\wedge\gamma=&a_{4}\widehat{e^{3}}+\sum_{i=2}^{l+1}\left(  a_{2i-1}%
\widehat{e^{2i}}\right)  +\sum_{i=2}^{l}\left(  a_{2i+2}\widehat{e^{2i+1}%
}\right)  .
\end{align*}
Therefore,
\begin{align*}
a\wedge\left(  x\,\alpha+z\,\gamma\right)  =&z\,a_{4}\widehat{e^{3}}+\sum
_{i=2}^{l+1}\left(  x\,a_{2i+1}+z\,a_{2i-1}\right)  \widehat{e^{2i}}%
\\&+\sum_{i=2}^{l}\left(  x\,a_{2i}+z\,a_{2i+2}\right)  \widehat{e^{2i+1}%
}+x\,a_{2l+2}\widehat{e^{2l+3}},\\
a\wedge\left(  y\,\beta+z\,\gamma\right)  =&\left(  y\,a_{2l+3}+z\,a_{4}%
\right)  \widehat{e^{3}}+z\left\{  \sum_{i=2}^{l+1}\left(  a_{2i-1}%
\widehat{e^{2i}}\right)  +\sum_{i=2}^{l}\left(  a_{2i+2}\widehat{e^{2i+1}%
}\right)  \right\}\\&  +y\,a_{3}\widehat{e^{2l+3}}.
\end{align*}

\emph{Case 1 :} $x=z=0.$ This is our claim.\smallskip

\emph{Case 2 :} $z=0$ and $x\neq0.$ We can also assume that $y\neq0$ otherwise
we have the claim $y=z=0.$ From the first equation we obtain%
\[
a_{2i}=a_{2i+1}=0,\quad i=2,\cdots,l+1.
\]
So only $a_{3}$ might be non-zero. However since $y\neq0$ we deduce from the
second equation that $a_{3}=0$ and thus $a=0$ which is impossible.\smallskip

\emph{Case 3 :} $x=0$ and $z\neq0.$ We can also assume that $y\neq0$ otherwise
we have the claim $x=y=0.$ From the first equation we obtain%
\[
a_{2i}=a_{2i-1}=0,\quad i=2,\cdots,l+1
\]
So only $a_{2l+3}$ might be non-zero. However since $y\neq0$ we deduce,
appealing to the second equation, that $a_{2l+3}=0$ and thus $a=0$ which is
again impossible.\smallskip

\emph{Case 4 :} $xz\neq0.$ From the first equation we deduce that%
\[
a_{2i}=0,\quad i=2,\cdots,l+1
\]
Inserting this in the second equation we get%
\[
a\wedge\left(  y\,\beta+z\,\gamma\right)  =y\,a_{2l+3}\widehat{e^{3}}%
+z\sum_{i=2}^{l+1}\left(  a_{2i-1}\widehat{e^{2i}}\right)  +y\,a_{3}%
\widehat{e^{2l+3}}.
\]
Since $z\neq0,$ we infer that%
\[
a_{2i-1}=0,\quad i=2,\cdots,l+1.
\]
So only $a_{2l+3}$ might be non-zero. However returning to the first equation
we have%
\[
x\,a_{2l+3}=0.
\]
But since $x\neq0,$ we deduce that $a_{2l+3}=0$ and thus $a=0$ which is again
impossible. This settles the case $k$ even. The odd case is handled in a very
similar manner and we leave out the details\smallskip
\end{proof}

We may now conclude with the proof of Theorem \ref{Thm comme Sverak}, which
is, once the above lemma is established, almost identical to the proof of
Sverak.\smallskip

\begin{proof}
It is enough to prove the theorem for $n=k+3.$ \smallskip

\emph{Step 1.} We start with some notations. Let $L$ be as in Lemma
\ref{Lemme algebrique Sverak}. An element $\xi$ of $L$ is, when convenient,
denoted by $\xi=\left(  x,y,z\right)  \in L.$ Recall that if $\xi=\left(
x,y,z\right)  \in L$ is $1$-divisible, meaning that $\xi=b\wedge a$ for a
certain $a\in\Lambda^{1}$ and $b\in\Lambda^{k-1},$ then necessarily%
\[
xy=xz=yz=0.
\]
We next let $P:\Lambda^{k}\left(  \mathbb{R}^{k+3}\right)  \rightarrow L$ be
the projection map; in particular $P\left(  \xi\right)  =\xi$ if $\xi\in
L.\smallskip$

\emph{Step 2.} Let $g:L\subset\Lambda^{k}\left(  \mathbb{R}^{k+3}\right)
\rightarrow\mathbb{R}$ be defined by%
\[
g\left(  \xi\right)  =-x\,y\,z.
\]
Observe that, $g$ is ext. one affine when restricted to $L.$ Indeed, if
$\xi=\left(  x,y,z\right)  \in L$ and $\eta=\left(  a,b,c\right)  \in L$ is
$1$-divisible (which implies that $ab=ac=bc=0$), then%
$$
g\left(  \xi+t\eta\right)=-\left(  x+ta\right)  \left(  y+tb\right)
\left(  z+tc\right)=-x\,y\,z-t\left[  x\,y\,c+x\,z\,b+y\,z\,a\right]  .
$$
We therefore have that, for every $\xi,\eta\in L$ with $\eta$ being $1$-divisible,%
\[
L_{g}\left(  \xi,\eta\right)  =\left.  \frac{d^{2}}{dt^{2}}g\left(  \xi
+t\eta\right)  \right\vert _{t=0}=0.
\]

\emph{Step 3.} Let $\omega\in C_{per}^{\infty}\left(  \left(  0,2\pi\right)
^{k+3};\Lambda^{k-1}\right)  $ be defined by%
\[
\omega=\left(  \sin x_{1}\right)  \alpha+\left(  \sin x_{2}\right)
\beta+\left(  \sin\left(  x_{1}+x_{2}\right)  \right)  \gamma,
\]
so that
\[
d\omega=\left(  \cos x_{1}\right)  dx^{1}\wedge\alpha+\left(  \cos
x_{2}\right)  dx^{2}\wedge\beta+\left(  \cos\left(  x_{1}+x_{2}\right)
\right)  \left(  dx^{1}+dx^{2}\right)  \wedge\gamma,
\]
and hence $d\omega\in L.$ Note that%
\[
\int_{0}^{2\pi}\int_{0}^{2\pi}g\left(  d\omega\right)  dx_{1}\,dx_{2}%
=-\int_{0}^{2\pi}\int_{0}^{2\pi}\left(  \cos x_{1}\right)  ^{2}\left(  \cos
x_{2}\right)  ^{2}dx_{1}\,dx_{2}<0.
\]

\emph{Step 4.} Assume, cf. Step 5, that we have shown that for every
$\epsilon>0$, we can find $\gamma=\gamma\left(  \epsilon\right)  >0$ such that%
\[
f_{\epsilon}\left(  \xi\right)  =g\left(  P\left(  \xi\right)  \right)
+\epsilon\left\vert \xi\right\vert ^{2}+\epsilon\left\vert \xi\right\vert
^{4}+\gamma\left\vert \xi-P\left(  \xi\right)  \right\vert ^{2}
\]
is ext. one convex. Then noting that%
\[
f_{\epsilon}\left(  d\omega\right)  =g\left(  d\omega\right)  +\epsilon
\left\vert d\omega\right\vert ^{2}+\epsilon\left\vert d\omega\right\vert ^{4},
\]
we deduce from Step 3 that, for $\epsilon>0$ small enough%
\[
\int_{\left(  0,2\pi\right)  ^{k+3}}f_{\epsilon}\left(  d\omega\right)  dx<0.
\]
This shows that $f_{\epsilon}$ is not ext. quasiconvex. The proposition is
therefore proved.$\smallskip$

\emph{Step 5.} It remains to prove that for every $\epsilon>0$ we can find
$\gamma=\gamma\left(  \epsilon\right)  >0$ such that%
\[
f_{\epsilon}\left(  \xi\right)  =g\left(  P\left(  \xi\right)  \right)
+\epsilon\left\vert \xi\right\vert ^{2}+\epsilon\left\vert \xi\right\vert
^{4}+\gamma\left\vert \xi-P\left(  \xi\right)  \right\vert ^{2}%
\]
is ext. one convex. This is equivalent to showing that, for every $\xi,\eta
\in\Lambda^{k}$ with $\eta$ being $1$-divisible,%
\begin{align*}
L_{f_\epsilon}\left(  \xi,\eta\right)   &  =\left.  \frac{d^{2}}{dt^{2}}f_\epsilon\left(
\xi+t\eta\right)  \right\vert _{t=0}\smallskip\\
&  =L_{g}\left(  P\left(  \xi\right)  ,P\left(  \eta\right)  \right)
+2\epsilon\left\vert \eta\right\vert ^{2}+4\epsilon\left\vert \xi\right\vert
^{2}\left\vert \eta\right\vert ^{2}+8\epsilon\left(  \left\langle \xi
;\eta\right\rangle \right)  ^{2}+2\gamma\left\vert \eta-P\left(  \eta\right)
\right\vert ^{2}\smallskip\\
&  \geq0.
\end{align*}
The proof follows in the standard way, see \cite{Dacorogna 2007} and \cite{Sil} for more detail.
\end{proof}

\subsection{Some further examples}

We here give another counterexample for $k=2.$

\begin{proposition}
\label{Prop k=2 quasic n'implique pas polyc}Let $n\geq4.$ Then there exists an
ext. quasiconvex function $f:\Lambda^{2}\left(  \mathbb{R}^{n}\right)\rightarrow\mathbb{R}$
which is not ext. polyconvex.
\end{proposition}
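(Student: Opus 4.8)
The plan is to reduce to $n=4$. For $n\ge 6$ one may simply invoke Theorem \ref{Thm contre exemple quadratique k=2}: it produces a quadratic $f\colon\Lambda^{2}(\mathbb{R}^{n})\to\mathbb{R}$ that is ext. one convex but not ext. polyconvex, and, $f$ being quadratic, ext. one convexity is the same as ext. quasiconvexity by Theorem \ref{Thm formes quadratiques} (i). It thus remains to treat $n=4$ and $n=5$, and the case $n=5$ reduces to the case $n=4$. Indeed, suppose $f_{0}\colon\Lambda^{2}(\mathbb{R}^{4})\to\mathbb{R}$ is continuous, ext. quasiconvex and not ext. polyconvex; for $n>4$ let $\pi$ be the orthogonal projection of $\Lambda^{2}(\mathbb{R}^{n})$ onto $\operatorname{span}\{e^{i}\wedge e^{j}:1\le i<j\le 4\}$ and set $\tilde f(\xi)=f_{0}(\pi\xi)+|\xi-\pi\xi|^{2}$. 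Since $\eta^{s}\in\Lambda^{2s}(\mathbb{R}^{4})=\{0\}$ for $\eta\in\Lambda^{2}(\mathbb{R}^{4})$ and $s\ge 3$, the restriction to $\Lambda^{2}(\mathbb{R}^{4})$ of an ext. polyconvex function on $\Lambda^{2}(\mathbb{R}^{n})$ has the form $\eta\mapsto G(\eta,\eta\wedge\eta)$ with $G$ convex, hence is ext. polyconvex there; were $\tilde f$ ext. polyconvex, so would be $\tilde f|_{\Lambda^{2}(\mathbb{R}^{4})}=f_{0}$, a contradiction. And $\tilde f$ is ext. quasiconvex: for $\Omega=(0,1)^{n}$ (Remark \ref{Remarque sur les def de convexite} (iv)) and $\omega=\sum_{j}\omega_{j}e^{j}\in W^{1,\infty}_{0}(\Omega;\Lambda^{1})$ one has $\int_{\Omega}d\omega=0$, hence $\int_{\Omega}(d\omega-\pi d\omega)=0$ and $\int_{\Omega}|\xi-\pi\xi+(d\omega-\pi d\omega)|^{2}\ge|\xi-\pi\xi|^{2}\operatorname{meas}\Omega$; moreover on each slice $\{x_{5},\dots,x_{n}=\mathrm{const}\}$ one has $\pi(d\omega)=d'\big(\sum_{j\le 4}\omega_{j}e^{j}\big)$ with $d'$ the exterior derivative in $x_{1},\dots,x_{4}$, so a Fubini argument together with ext. quasiconvexity of $f_{0}$ gives $\int_{\Omega}f_{0}(\pi\xi+\pi d\omega)\ge f_{0}(\pi\xi)\operatorname{meas}\Omega$; adding the two inequalities finishes this. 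So everything reduces to constructing $f_{0}$ on $\Lambda^{2}(\mathbb{R}^{4})$.

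On $\Lambda^{2}(\mathbb{R}^{4})\cong\mathbb{R}^{6}$ the only nontrivial exterior power is $\xi\mapsto\xi\wedge\xi\in\Lambda^{4}(\mathbb{R}^{4})\cong\mathbb{R}$; write $q(\xi):=\langle e^{1}\wedge e^{2}\wedge e^{3}\wedge e^{4};\xi\wedge\xi\rangle$, which by Theorem \ref{Thm principal quasiaffine} is, up to a positive constant, the ext. quasiaffine quadratic: a nondegenerate quadratic form of signature $(3,3)$ vanishing precisely on the $1$-divisible forms, proportional to $|\eta|^{2}$ on self-dual forms and to $-|\eta|^{2}$ on anti-self-dual ones. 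By Theorem \ref{Thm formes quadratiques} (ii), no quadratic form here is ext. quasiconvex without being ext. polyconvex, so the sought $f_{0}$ must be non-quadratic; the idea is to transplant the classical construction of a quasiconvex-but-not-polyconvex integrand for $2\times 2$ matrices (see \cite{Dacorogna 2007}), with $q$ in the role of the determinant. The cleanest framework is through envelopes: by Remark \ref{21.6.2013.3} and Proposition \ref{Proposition equiv polyconvexite}, for continuous $h$ the ext. quasiconvex envelope $Q_{ext}h$ and the ext. polyconvex envelope $P_{ext}h$ are given by explicit formulas and satisfy $P_{ext}h\le Q_{ext}h$, so if one exhibits $h$ with $P_{ext}h(\xi_{\ast})<Q_{ext}h(\xi_{\ast})$ at some $\xi_{\ast}$, then $f_{0}:=Q_{ext}h$ is ext. quasiconvex by construction and cannot be ext. polyconvex (were it so, being $\le h$ it would lie below $P_{ext}h$, forcing $P_{ext}h=Q_{ext}h$ everywhere). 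Natural candidates for $h$ are the Alibert--Dacorogna-type family $f_{\gamma}(\xi)=|\xi|^{2}\big(|\xi|^{2}-\gamma\,q(\xi)\big)$, or a Kohn--Strang-type integrand $h(\xi)=1+|\xi|^{2}$ for $\xi\ne 0$ and $h(0)=0$.

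Carrying this out for $h=f_{\gamma}$ splits into two steps. First, by Proposition \ref{Proposition equiv polyconvexite} (ii) applied at $\xi=0$---where $4$-homogeneity forces the linear supporting term to vanish---ext. polyconvexity of $f_{\gamma}$ would require some $\lambda\in\mathbb{R}$ with $f_{\gamma}(\eta)\ge\lambda\,q(\eta)$ for all $\eta$; testing on self-dual forms (where $q$ and $|\eta|^{2}$ are proportional) shows that no finite $\lambda$ works once $\gamma$ exceeds an explicit threshold $\Gamma$ (classically $\Gamma=1$), so $f_{\gamma}$ fails ext. polyconvexity for $\gamma>\Gamma$. Second, and this is the crux, one must show that $f_{\gamma}$ is nonetheless ext. quasiconvex for some $\gamma>\Gamma$ (equivalently, for the Kohn--Strang $h$, that $Q_{ext}h<h$ somewhere, which one extracts from an explicit computation of both envelopes). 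I would do this in the periodic formulation of Remark \ref{Remarque sur les def de convexite} (v): expand a test field $\omega\in W^{1,\infty}_{per}((0,1)^{4};\Lambda^{1})$ in Fourier series, use ext. quasiaffinity of $q$ to write $\int q(d\omega)=q\big(\int d\omega\big)$, and bound $\int|d\omega|^{2}\big(|d\omega|^{2}-\gamma\,q(d\omega)\big)$ from below by a Plancherel and Young estimate over the nonzero modes, exactly as in the $2\times 2$ case; alternatively one transfers the known classical statement through the correspondence of Bandyopadhyay--Sil \cite{Band-Sil}. This second step is the main obstacle: ext. one convexity of $f_{\gamma}$ is a short finite-dimensional computation along $1$-divisible directions in the spirit of Lemma \ref{Lemme formes quadratiques}, whereas pushing ext. quasiconvexity strictly past the ext. polyconvexity threshold is delicate, and I expect the bulk of the work---and the reference to \cite{Sil}---to reside there.
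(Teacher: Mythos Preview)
Your reduction to $n=4$ is correct and more explicit than the paper's one-line remark. The substantive gap is in the construction on $\Lambda^{2}(\mathbb{R}^{4})$: you correctly identify that the hard step in your Alibert--Dacorogna-type approach is proving ext.\ quasiconvexity of $f_{\gamma}$ for some $\gamma>1$, but you do not carry it out. Saying it should go ``exactly as in the $2\times 2$ case'' is optimistic---$\Lambda^{2}(\mathbb{R}^{4})$ is six-dimensional with $q$ of signature $(3,3)$, whereas $\mathbb{R}^{2\times 2}$ is four-dimensional with $\det$ of signature $(2,2)$, so the classical Fourier computation does not transplant verbatim, and invoking \cite{Band-Sil} without specifying the precise dictionary is not a proof. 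You openly flag this as ``the main obstacle,'' and indeed that is where your argument stops.

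The paper sidesteps this obstacle by a different route, after \v{S}ver\'{a}k \cite{Sverak 1991}. Fix $1<p<2$ and $\alpha=e^{1}\wedge e^{2}+e^{3}\wedge e^{4}$, set $g(\xi)=\min\{|\xi-\alpha|^{p},|\xi+\alpha|^{p}\}$, and take $f=Q_{ext}g$, so ext.\ quasiconvexity comes \emph{for free}. The work is to show $f$ is not ext.\ polyconvex. Since $f$ has subquadratic growth, Proposition \ref{Proposition equiv polyconvexite} forces any supporting $\langle c;\xi\wedge\xi\rangle$ term to vanish, so ext.\ polyconvexity of $f$ would mean convexity of $f$; by symmetry and $f(\pm\alpha)=0$ this would give $f(0)=0$. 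But $f(0)>0$: writing $g(\xi)\ge h(\xi)=c_{1}(|\xi_{12}-\xi_{34}|^{p}+|\xi_{13}+\xi_{24}|^{p}+|\xi_{14}-\xi_{23}|^{p})$ and using that $h(d\omega)$ controls $\|\nabla\omega\|_{L^{p}}^{p}$ for $\omega\in C^{\infty}_{\delta,T}$ (via the Hodge system $\Delta\omega=\psi$ with $\psi$ built from first derivatives of $h^{1/p}$-type quantities, and $W^{-1,p}$ elliptic regularity), any minimizing sequence for $Q_{ext}g(0)$ satisfies $d\omega_{s}\to 0$ in $L^{p}$, hence $\int g(d\omega_{s})\to g(0)=|\alpha|^{p}>0$, a contradiction. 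The mechanism---subquadratic growth to collapse ext.\ polyconvexity to convexity, the envelope to get ext.\ quasiconvexity automatically, and an elliptic coercivity estimate to rule out convexity---replaces the delicate Fourier step you were unable to supply.
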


\begin{remark}
This example is mostly interesting when $n=4$ or $5.$ Since when $n\geq6,$ we
already have such a counterexample (cf. Theorem
\ref{Thm contre exemple quadratique k=2}).
\end{remark}

\begin{proof}
As in previous theorems, it is easy to see that it is enough to establish the
theorem for $n=4.$ Let $1<p<2,$ $\alpha=e^{1}\wedge e^{2}+e^{3}\wedge e^{4}$
and $g:\Lambda^{2}\left(  \mathbb{R}^{4}\right)  \rightarrow\mathbb{R}$ be
given by%
\[
g\left(  \xi\right)  =\left(  \left\vert \xi\right\vert ^{2}-2\left\vert
\left\langle \alpha;\xi\right\rangle \right\vert +\left\vert \alpha\right\vert
^{2}\right)  ^{p/2}=\min\left\{  \left\vert \xi-\alpha\right\vert
^{p},\left\vert \xi+\alpha\right\vert ^{p}\right\}  .
\]
The claim is that $f=Q_{ext}g$ has all the desired properties (the proof is
inspired by the one of Sverak \cite{Sverak 1991}, see also Theorem 5.54 in
\cite{Dacorogna 2007}). Indeed $f$ is by construction ext. quasiconvex and if
we can show (cf. Step 2) that $f$ is not convex (note that $f$ is subquadratic and using Proposition \ref{Proposition equiv polyconvexite}, any subquadratic ext. polyconvex function is convex) we will have established the proposition.\smallskip

\emph{Step 1.} First observe that a direct computation gives%
\[
\left\vert \xi\right\vert ^{2}-2\left\vert \left\langle \alpha;\xi
\right\rangle \right\vert +\left\vert \alpha\right\vert ^{2}=\min\left\{
\left\vert \xi-\alpha\right\vert ^{2},\left\vert \xi+\alpha\right\vert
^{2}\right\}  \geq\frac{1}{2}\left[  \left\vert \xi\right\vert ^{2}-\frac
{1}{2}\left\langle \alpha\wedge\alpha;\xi\wedge\xi\right\rangle \right]
\geq0.
\]
We therefore get that there exists a constant $c_{1}>0$ such that%
\begin{align*}
g\left(  \xi\right)   &  \geq\left(\frac{1}{2}\right)^{\frac{p}{2}}\left[  \left\vert \xi\right\vert
^{2}-\frac{1}{2}\left\langle \alpha\wedge\alpha;\xi\wedge\xi\right\rangle
\right]  ^{p/2}\smallskip\\
&  \geq c_{1}\left[  \left\vert \xi_{12}-\xi_{34}\right\vert ^{p}+\left\vert
\xi_{13}+\xi_{24}\right\vert ^{p}+\left\vert \xi_{14}-\xi_{23}\right\vert
^{p}\right]  .
\end{align*}
Call $h$ the right hand side, namely%
\[
h\left(  \xi\right)  =c_{1}\left[  \left\vert \xi_{12}-\xi_{34}\right\vert
^{p}+\left\vert \xi_{13}+\xi_{24}\right\vert ^{p}+\left\vert \xi_{14}-\xi
_{23}\right\vert ^{p}\right]  .
\]

\emph{Step 2.} To prove that $f$ is not convex, we proceed by contradiction. Let us suppose, on the contrary, that $f$ is convex. This implies that $f(0)=0$, because
\[
0\leq f\left(  0\right)  =f\left(  \frac{1}{2}\alpha+\frac{1}{2}\left(
-\alpha\right)  \right)  \leq\frac{1}{2}f\left(  \alpha\right)  +\frac{1}%
{2}f\left(  -\alpha\right)  =0.
\]
Use Remark \ref{21.6.2013.3} to find a
sequence of $\omega_{s}\in W_{\delta,T}^{1,\infty}\left(  \Omega;\Lambda
^{1}\right)  $ (we can choose an $\Omega$ with smooth boundary and by density
we can also assume that $\omega_{s}\in C_{\delta,T}^{\infty}\left(
\overline{\Omega};\Lambda^{1}\right)  $) such that%
\[
0\leq\frac{1}{\operatorname*{meas}\Omega}\int_{\Omega}g\left(  d\omega
_{s}\right)  \leq Q_{ext}g\left(  0\right)  +\frac{1}{s}=f\left(  0\right)
+\frac{1}{s}=\frac{1}{s},
\]
which implies that
\begin{equation}\label{21.6.2013.2}
\lim_{s\rightarrow\infty}\frac{1}{\operatorname*{meas}\Omega}\int_{\Omega}g\left(  d\omega
_{s}\right)=0.
\end{equation}
On the other hand, from Step 1, we deduce that%
\[
0\leq\int_{\Omega}h\left(  d\omega_{s}\right)  \leq\frac{\operatorname*{meas}%
\Omega}{s}\rightarrow0.
\]
We now invoke Step 3 to find a constant $c_{2}>0$ such that%
\[
c_{2}\left\Vert \nabla\omega_{s}\right\Vert _{L^{p}}^{p}\leq\int_{\Omega
}h\left(  d\omega_{s}\right)  .
\]
Thus $\left\Vert d\omega_{s}\right\Vert _{L^{p}}\rightarrow0$ and hence, up to
the extraction of a subsequence,%
\[
\frac{1}{\operatorname*{meas}\Omega}\int_{\Omega}g\left(  d\omega_{s}\right)
\rightarrow g\left(  0\right)  =\left\vert \alpha\right\vert ^{p}\neq0,
\]
which contradicts Equation (\ref{21.6.2013.2}). Therefore, $f$ is not convex.\smallskip

\emph{Step 3.} It remains to prove that there exists a constant $\lambda>0$
such that%
\[
\lambda\left\Vert \nabla\omega\right\Vert _{L^{p}}^{p}\leq\int_{\Omega
}h\left(  d\omega\right)  =\left\Vert \left[  h\left(  d\omega\right)
\right]  ^{1/p}\right\Vert^p _{L^{p}}\,,\quad\text{for every }\omega\in
C_{\delta,T}^{\infty}\left(  \overline{\Omega};\Lambda^{1}\right)  .
\]
To establish the estimate we proceed as follows. Let $\omega\in C_{\delta
,T}^{\infty}\left(  \overline{\Omega};\Lambda^{1}\right)  ,$ $\alpha
,\beta,\gamma\in C^{\infty}\left(  \overline{\Omega}\right)  $ be such that%
\[%
\begin{array}
[c]{ccccc}%
\alpha & = & \left(  d\omega\right)  _{12}-\left(  d\omega\right)  _{34} & = &
-\omega_{x_{2}}^{1}+\omega_{x_{1}}^{2}+\omega_{x_{4}}^{3}-\omega_{x_{3}}%
^{4},\smallskip\\
\beta & = & \left(  d\omega\right)  _{13}+\left(  d\omega\right)  _{24} & = &
-\omega_{x_{3}}^{1}+\omega_{x_{1}}^{3}-\omega_{x_{4}}^{2}+\omega_{x_{2}}%
^{4},\smallskip\\
\gamma & = & \left(  d\omega\right)  _{14}-\left(  d\omega\right)  _{23} & = &
-\omega_{x_{4}}^{1}+\omega_{x_{1}}^{4}+\omega_{x_{3}}^{2}-\omega_{x_{2}}%
^{3},\smallskip\\
0 & = & \delta\omega & = & \omega_{x_{1}}^{1}+\omega_{x_{2}}^{2}+\omega
_{x_{3}}^{3}+\omega_{x_{4}}^{4}\,.
\end{array}
\]
Note that%
\[
h\left(  d\omega\right)  =c_{1}\left[  \left\vert \alpha\right\vert
^{p}+\left\vert \beta\right\vert ^{p}+\left\vert \gamma\right\vert
^{p}\right]  .
\]
Differentiating appropriately the four equations we find
\[
\left\{
\begin{array}{rlrl}
\Delta\omega^{1}  &  =-\alpha_{x_{2}}-\beta_{x_{3}}-\gamma_{x_{4}},&\Delta\omega^{2} &=\alpha_{x_{1}}-\beta_{x_{4}}+\gamma_{x_{3}},\\
\Delta\omega^{3}  &  =\alpha_{x_{4}}+\beta_{x_{1}}-\gamma_{x_{2}},&\Delta\omega^{4}  &  =-\alpha_{x_{3}}+\beta_{x_{2}}+\gamma_{x_{1}}.
\end{array}
\right.
\]
Letting%
\begin{align*}
\psi:=&-\left(  \alpha_{x_{2}}+\beta_{x_{3}}+\gamma_{x_{4}}\right)dx^1+\left(\alpha_{x_{1}}-\beta_{x_{4}}+\gamma_{x_{3}}\right)dx^2+\left(\alpha_{x_{4}}+\beta_{x_{1}}-\gamma_{x_{2}}
\right)dx^3\\&+\left(-\alpha_{x_{3}}+\beta_{x_{2}}+\gamma_{x_{1}}\right)dx^4,
\end{align*}
we get%
\[
\left\{
\begin{array}
[c]{cl}%
\Delta\omega=\psi & \text{in }\Omega\smallskip\\
\nu\wedge\delta\omega=0,\,\nu\wedge\omega=0 & \text{on }\partial\Omega.\smallskip\\
\end{array}
\right.
\]
Using classical elliptic regularity theory, see, for example, Theorem 6.3.7 of \cite{Morrey 1966}, we deduce that
\[
\left\Vert \omega\right\Vert _{W^{1,p}}\leq\lambda_{2}\left\Vert
\psi\right\Vert _{W^{-1,p}}.
\]
In other words,%
\[
\left\Vert \nabla\omega\right\Vert _{L^{p}}\leq\lambda_{2}\left\Vert
\psi\right\Vert _{W^{-1,p}}\leq\lambda_{3}\left\Vert \left(  \alpha
,\beta,\gamma\right)  \right\Vert _{L^{p}}\leq\lambda_{4}\left\Vert \left[
h\left(  d\omega\right)  \right]  ^{1/p}\right\Vert _{L^{p}}\,.
\]
This is exactly what had to be proved.\smallskip
\end{proof}

\section{Application to a minimization problem}

\begin{theorem}
\label{Thm existence de minima}Let $1\leq k\leq n,$ $p>1,$ $\Omega
\subset\mathbb{R}^{n}$ be a bounded smooth open set, $\omega_{0}\in
W^{1,p}\left(  \Omega;\Lambda^{k-1}\right)  $ and $f:\Lambda^{k}\left(
\mathbb{R}^{n}\right)  \rightarrow\mathbb{R}$ be ext. quasiconvex verifying,
\[
c_{1}\left(  \left\vert \xi\right\vert ^{p}-1\right)  \leq f\left(
\xi\right)  \leq c_{2}\left(  \left\vert \xi\right\vert ^{p}+1\right),\text{ for every }\xi\in\Lambda^{k},
\]
for some $c_{1}\,,c_{2}>0.$ Let%
\[
\mathcal{(P}_{0}\mathcal{)\qquad}\inf\left\{  \int_{\Omega}f\left(
d\omega\right)  :\omega\in\omega_{0}+W_{0}^{1,p}\left(  \Omega;\Lambda
^{k-1}\right)  \right\}  =m.
\]
Then the problem $\mathcal{(\mathcal{P}}_{0}\mathcal{)}$ has a minimizer.
\end{theorem}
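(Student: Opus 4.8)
The plan is to use the direct method of the calculus of variations, with ext. quasiconvexity playing the role of the weak lower semicontinuity condition. First I would take a minimizing sequence $\omega_j \in \omega_0 + W_0^{1,p}(\Omega;\Lambda^{k-1})$ for $(\mathcal{P}_0)$. The coercivity bound $f(\xi) \geq c_1(|\xi|^p - 1)$ gives a uniform bound on $\|d\omega_j\|_{L^p}$, but — and this is precisely the coercivity issue flagged in the introduction — this does \emph{not} immediately bound $\|\nabla\omega_j\|_{L^p}$, only the combination $d\omega_j$. The key step to overcome this is a Gaffney-type inequality / Hodge decomposition: I would invoke the result (from \cite{Csato-Dac-Kneuss 2012}, cf. the use of Theorem 8.16 there in the proof of Proposition \ref{Prop equiv quasiconvexe}) that allows one to replace $\omega_j$ by a new $\widetilde\omega_j$ with the same boundary data and the same $d\widetilde\omega_j$ but satisfying $\delta\widetilde\omega_j = 0$ (modulo a harmonic field correction), for which the elliptic estimate $\|\widetilde\omega_j\|_{W^{1,p}} \leq C(\|d\widetilde\omega_j\|_{L^p} + \|\delta\widetilde\omega_j\|_{L^p} + \text{boundary/lower order terms})$ holds. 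Since the functional only sees $d\omega$, this substitution does not change the value of $\int_\Omega f(d\omega_j)$, so we may assume the minimizing sequence is bounded in $W^{1,p}(\Omega;\Lambda^{k-1})$.

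Next, by reflexivity of $W^{1,p}$ for $p>1$, extract a subsequence with $\omega_j \rightharpoonup \bar\omega$ weakly in $W^{1,p}$, with $\bar\omega \in \omega_0 + W_0^{1,p}(\Omega;\Lambda^{k-1})$ since this affine subspace is weakly closed. Then $d\omega_j \rightharpoonup d\bar\omega$ weakly in $L^p(\Omega;\Lambda^k)$. The final step is to establish weak lower semicontinuity:
\[
\int_\Omega f(d\bar\omega) \leq \liminf_{j\to\infty} \int_\Omega f(d\omega_j),
\]
which combined with $\bar\omega$ being admissible shows $\bar\omega$ is a minimizer. This is the step where ext. quasiconvexity is essential — it is exactly the analogue of Morrey's theorem that quasiconvexity is equivalent to sequential weak lower semicontinuity of $\omega \mapsto \int f(\nabla\omega)$. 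One proves it by the usual blow-up / localization argument: cover $\Omega$ by small cubes, on each use the definition of ext. quasiconvexity (in the form of Remark \ref{Remarque sur les def de convexite}(v) with periodic test forms, or via Proposition \ref{Prop equiv quasiconvexe}) together with the upper bound $f(\xi) \leq c_2(|\xi|^p+1)$ to control truncation errors, and pass to the limit using the equiintegrability of $|d\omega_j|^p$ (obtained from the $L^p$ bound via a De la Vallée-Poussin / Decomposition lemma argument). The growth conditions $c_1(|\xi|^p-1) \leq f(\xi) \leq c_2(|\xi|^p+1)$ are precisely what make both the coercivity and the continuity/truncation estimates work.

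The main obstacle I expect is the coercivity step: unlike the classical scalar or vectorial gradient case, controlling $d\omega$ in $L^p$ does not control $\nabla\omega$, so one genuinely needs the Hodge-decomposition substitution to gauge-fix $\omega$ into $W^{1,\infty}_{\delta,T}$-type spaces (or at least $\delta\omega = 0$ plus orthogonality to harmonic fields). Once that reduction is in place — and it is available off the shelf from \cite{Csato-Dac-Kneuss 2012} — the remainder is a faithful transcription of the standard lower-semicontinuity proof for quasiconvex integrands (as in Chapters 8–9 of \cite{Dacorogna 2007}), with $\nabla\omega$ replaced by $d\omega$ and rank-one directions replaced by $1$-divisible directions. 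I would therefore present the coercivity reduction in detail and then remark that the weak lower semicontinuity follows exactly as in the classical case, citing \cite{Dacorogna 2007} and \cite{Sil} for the details.
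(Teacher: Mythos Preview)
Your plan is correct and matches the paper's proof almost exactly: minimizing sequence, coercivity gives an $L^p$ bound on $d\omega_s$, Hodge gauge-fixing yields a $W^{1,p}$-bounded sequence, weak compactness, and then sequential weak lower semicontinuity (which the paper also just cites to \cite{Band-Sil} and \cite{Sil}). The only slip is in the boundary condition for the gauge-fixed sequence: you cannot in general keep the \emph{full} Dirichlet datum $\widetilde\omega_j=\omega_0$ on $\partial\Omega$ while also forcing $\delta\widetilde\omega_j=0$ and obtaining a $W^{1,p}$ bound depending only on $\|d\omega_j\|_{L^p}$. The paper (via Theorem~7.2 of \cite{Csato-Dac-Kneuss 2012}, not 8.16) gauge-fixes to $\alpha_s\in\omega_0+W^{1,p}_{\delta,T}$, i.e.\ only the \emph{tangential} condition $\nu\wedge\alpha_s=\nu\wedge\omega_0$, so the weak limit $\alpha$ lies in $\omega_0+W^{1,p}_{\delta,T}$, not in $\omega_0+W^{1,p}_0$; a short extra step (Theorem~8.16) then produces $\omega\in\omega_0+W^{1,p}_0$ with $d\omega=d\alpha$, which is the actual minimizer for $(\mathcal{P}_0)$. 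Your closing remark about $W^{1,\infty}_{\delta,T}$-type spaces shows you already have the right object in mind; just be consistent about which admissible class the gauge-fixed sequence lives in, and add the one-line conversion back to $\omega_0+W^{1,p}_0$ at the end.
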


\begin{remark}
\textbf{(i)} If
\[
\mathcal{(P}_{\delta,T}\mathcal{)\qquad}\inf\left\{  \int_{\Omega}f\left(
d\omega\right)  :\omega\in\omega_{0}+W_{\delta,T}^{1,p}\left(  \Omega
;\Lambda^{k-1}\right)  \right\}  =m_{\delta,T},
\]
where $\omega_{0}+W_{\delta,T}^{1,p}\left(  \Omega;\Lambda
^{k-1}\right)  $ stands for the set of all $\omega\in W^{1,p}\left(
\Omega;\Lambda^{k-1}\right)  $ such that%
\[
\delta\omega=0\text{ in }\Omega\quad\text{and}\quad\nu\wedge\omega=\nu
\wedge\omega_{0}\text{ on }\partial\Omega,
\]
the proof of the theorem will show that $\mathcal{(P}_{\delta,T}\mathcal{)}$
also has a minimizer and that $m_{\delta,T}=m.$\smallskip

\textbf{(ii)} When the function $f$ is not ext. quasiconvex, in general the
problem will not have a solution. However, in many cases it does have one, but
the argument is of a different nature and uses results on differential
inclusions, see \cite{BBDM}, \cite{Band-Dac-Kneuss 2013} and
\cite{Dacorogna-Fonseca}.
\end{remark}

\begin{proof}
\emph{Step 1.} Using a variant of the classical result, see \cite{Band-Sil} and \cite{Sil}, we note that if
\[
\alpha_{s}\rightharpoonup\alpha\quad\text{in }W^{1,p}\left(  \Omega
;\Lambda^{k-1}\right),
\]
then%
\[
\underset{s\rightarrow\infty}{\lim\inf}\int_{\Omega}f\left(  d\alpha
_{s}\right)  \geq\int_{\Omega}f\left(  d\alpha\right)  .
\]

\emph{Step 2.} Let $\omega_{s}$ be a minimizing sequence of $\mathcal{(P}%
_{0}\mathcal{)},$ i.e.%
\[
\int_{\Omega}f\left(  d\omega_{s}\right)  \rightarrow m.
\]
In view of the coercivity condition, we find that there exists a constant
$c_{3}>0$ such that%
\[
\left\Vert d\omega_{s}\right\Vert _{L^{p}}\leq c_{3}\,.
\]

(i) According to Theorem 7.2 in \cite{Csato-Dac-Kneuss 2012} (when $p\geq 2$) and \cite{Sil} (when $p>1$), we can find
$\alpha_{s}\in\omega_{0}+W_{\delta,T}^{1,p}\left(  \Omega;\Lambda
^{k-1}\right)  $ such that%
\[
\left\{
\begin{array}
[c]{cl}%
d\alpha_{s}=d\omega_{s} & \text{in }\Omega\smallskip\\
\delta\alpha_{s}=0 & \text{in }\Omega\smallskip\\
\nu\wedge\alpha_{s}=\nu\wedge\omega_{s}=\nu\wedge\omega_{0} & \text{on
}\partial\Omega
\end{array}
\right.
\]
and there exist constants $c_{4}\,,c_{5}>0$ such that%
\[
\left\Vert \alpha_{s}\right\Vert _{W^{1,p}}\leq c_{4}\left[  \left\Vert
d\omega_{s}\right\Vert _{L^{p}}+\left\Vert \omega_{0}\right\Vert _{W^{1,p}%
}\right]  \leq c_{5}\,.
\]

(ii) Therefore, up to the extraction of a subsequence that we do not relabel,
there exists $\alpha\in\omega_{0}+W_{\delta,T}^{1,p}\left(  \Omega
;\Lambda^{k-1}\right)  $ such that
\[
\alpha_{s}\rightharpoonup\alpha,\quad\text{in }W^{1,p}\left(  \Omega
;\Lambda^{k-1}\right)  .
\]

(iii) We then use Theorem 8.16 in \cite{Csato-Dac-Kneuss 2012} (when $p\geq 2$) and \cite{Sil} (when $p>1$), to find
$\omega\in \omega_0+W_{0}^{1,p}\left(  \Omega;\Lambda^{k-1}\right)  $ such that%
\[
\left\{
\begin{array}
[c]{cl}%
d\omega=d\alpha & \text{in }\Omega\smallskip\\
\omega=\omega_{0} & \text{on }\partial\Omega.
\end{array}
\right.
\]

\emph{Step 3.} We combine the two steps to get%
\[
m=\underset{s\rightarrow\infty}{\lim\inf}\int_{\Omega}f\left(  d\omega
_{s}\right)  =\underset{s\rightarrow\infty}{\lim\inf}\int_{\Omega}f\left(
d\alpha_{s}\right)  \geq\int_{\Omega}f\left(  d\alpha\right)  =\int_{\Omega
}f\left(  d\omega\right)  \geq m.
\]
This concludes the proof of the theorem.
\end{proof}

\section{Notations}

We gather here the notations which we will use throughout this article. For
more details on exterior algebra and differential forms, see
\cite{Csato-Dac-Kneuss 2012} and for the notions of convexity used in the
calculus of variations, see \cite{Dacorogna 2007}.

\begin{enumerate}
\item Let $k,n$ be two integers.

\begin{itemize}
\item We write $\Lambda^{k}\left(  \mathbb{R}^{n}\right)  $ (or simply
$\Lambda^{k}$) to denote the vector space of all alternating $k$-linear maps
$f:\underbrace{\mathbb{R}^{n}\times\cdots\times\mathbb{R}^{n}}_{k-\text{times}%
}\rightarrow\mathbb{R}.$ For $k=0,$ we set $\Lambda^{0}\left(  \mathbb{R}%
^{n}\right)  =\mathbb{R}.$ Note that $\Lambda^{k}\left(  \mathbb{R}%
^{n}\right)  =\{0\}$ for $k>n$ and, for $k\leq n,$ $\operatorname{dim}\left(
\Lambda^{k}\left(  \mathbb{R}^{n}\right)  \right)  ={\binom{{n}}{{k}}}.$

\item $\wedge,$ $\lrcorner\,,$ $\left\langle ;\right\rangle $ and $\ast$ denote the exterior product, the interior product, the
scalar product and the Hodge star operator respectively.

\item If $\left\{  e^{1},\cdots,e^{n}\right\}  $ is a basis of $\mathbb{R}%
^{n},$ then, identifying $\Lambda^{1}$ with $\mathbb{R}^{n},$%
\[
\left\{  e^{i_{1}}\wedge\cdots\wedge e^{i_{k}}:1\leq i_{1}<\cdots<i_{k}\leq
n\right\}
\]
is a basis of $\Lambda^{k}.$ An element $\xi\in\Lambda^{k}\left(
\mathbb{R}^{n}\right)  $ will therefore be written as%
\[
\xi=\sum_{1\leq i_{1}<\cdots<i_{k}\leq n}\xi_{i_{1}i_{2}\cdots i_{k}%
}\,e^{i_{1}}\wedge\cdots\wedge e^{i_{k}}=\sum_{I\in\mathcal{T}_{k}^{n}}\xi
_{I}\,e^{I},
\]
where%
\[
\mathcal{T}_{k}^{n}=\left\{  I=\left(  i_{1}\,,\cdots,i_{k}\right)
\in\mathbb{N}^{k}:1\leq i_{1}<\cdots<i_{k}\leq n\right\}  .
\]
\item We write%
\[
e^{i_{1}}\wedge\cdots\wedge\widehat{e^{i_{s}}}\wedge\cdots\wedge e^{i_{k}%
}=e^{i_{1}}\wedge\cdots\wedge e^{i_{s-1}}\wedge e^{i_{s+1}}\wedge\cdots\wedge
e^{i_{k}}.
\]

\end{itemize}

\item Let $\Omega\subset\mathbb{R}^{n}$ be a bounded open set.

\begin{itemize}
\item The spaces $C^{1}\left(  \Omega;\Lambda^{k}\right)  ,$ $W^{1,p}\left(
\Omega;\Lambda^{k}\right)  $ and $W_{0}^{1,p}\left(  \Omega;\Lambda
^{k}\right)  ,$ $1\leq p\leq\infty$ are defined in the usual way.

\item For $\omega\in W^{1,p}\left(  \Omega;\Lambda^{k}\right)  ,$ the exterior
derivative $d\omega$ belongs to $L^{p}(\Omega;\Lambda^{k+1})$ and is defined
by
\[
(d\omega)_{i_{1}\cdots i_{k+1}}=\sum_{j=1}^{k+1}\left(  -1\right)  ^{j+1}%
\frac{\partial\omega_{i_{1}\cdots i_{j-1}i_{j+1}\cdots i_{k+1}}}{\partial
x_{i_{j}}}\,,
\]
for $1\leq i_{1}<\cdots<i_{k+1}\leq n.$ If $k=0,$ then $d\omega\simeq
\operatorname{grad}\omega.$ If $k=1,$ for $1\leq i<j\leq n,$
\[
(d\omega)_{ij}=\frac{\partial\omega_{j}}{\partial x_{i}}-\frac{\partial
\omega_{i}}{\partial x_{j}}\,,
\]
i.e. $d\omega\simeq\operatorname{curl}\omega.$

\item The interior derivative (or codifferential) of $\omega\in W^{1,p}\left(
\Omega;\Lambda^{k}\right)  ,$ denoted $\delta\omega,$ belongs to $L^{p}%
(\Omega;\Lambda^{k-1})$ and is defined as%
\[
\delta\omega=\left(  -1\right)  ^{n\left(  k-1\right)  }\ast\left(  d\left(
\ast\omega\right)  \right)  .
\]

\end{itemize}
\end{enumerate}

\noindent\textit{Acknowledgments.}
Part of this work was completed during visits of S. Bandyopadhyay to EPFL, whose hospitality and support is gratefully acknowledged. The research of S. Bandyopadhyay was partially supported by a SERB research project titled ``Pullback Equation for Differential Forms".

\end{document}